\newtheorem{theorem}{Theorem} 
\newtheorem{lemma}{Lemma}     
\newtheorem{corollary}{Corollary}
\newtheorem{proposition}{Proposition}
\newtheorem{definition}{Definition}
\newtheorem*{theorem*}{Theorem}
\newtheorem*{lemma*}{Lemma}
\newtheorem*{proposition*}{Proposition}
\newtheorem*{corollary*}{Corollary}
\newtheorem*{definition*}{Definition}
\newtheorem*{conjecture*}{Conjecture}
\theoremstyle{definition}
\newtheorem{remark}{Remark}
\newtheorem*{remark*}{Remark}
\newcommand{\C}{\mathbb{C}}
\newcommand{\Q}{\mathbb{Q}}
\newcommand{\Z}{\mathbb{Z}}
\newcommand{\PP}{\mathbb{P}}
\newcommand{\OO}{\mathcal{O}}
\newcommand{\DD}{\mathcal{D}}
\newcommand{\ZZ}{\mathcal{Z}}
\newcommand{\WW}{\mathcal{W}}
\newcommand{\QQ}{\mathcal{Q}}
\newcommand{\XX}{\mathcal{X}}
\newcommand{\LL}{\mathcal{L}}
\newcommand{\HHom}{\mathcal{H}om}
\newcommand{\ch}{\textnormal{ch}}
\newcommand{\rk}{\textnormal{rk}}
\newcommand{\td}{\textnormal{td}}
\newcommand{\Ext}{\textnormal{Ext}}
\newcommand{\Quot}{\textnormal{Quot}}
\newcommand{\Hilb}{\textnormal{Hilb}}
\newcommand{\Pic}{\textnormal{Pic}}
\newcommand{\Tan}{\textnormal{Tan}}
\newcommand{\Obs}{\textnormal{Obs}}
\newcommand{\Hom}{\textnormal{Hom}}
\newcommand{\vir}{\textnormal{vir}}
\renewcommand{\dim}{\textnormal{dim}}
\newcommand{\kod}{\textnormal{kod}}
\newcommand{\vd}{\textnormal{vd}}
\newcommand{\red}{\textnormal{red}}
\newcommand{\SW}{\textnormal{SW}}
\newcommand{\id}{\textnormal{id}}
\begin{document}
\title[Virtual $\chi_{-y}$-genus of Quot schemes]{Virtual $\chi_{-y}$-genera of Quot schemes on surfaces}

\author{Woonam Lim}
\address{Department of Mathematics, University of California, San Diego}
\email{w9lim@ucsd.edu}

\begin{abstract}
This paper studies the virtual $\chi_{-y}$-genera of Grothendieck's Quot schemes on surfaces, thus refining the calculations of the virtual Euler characteristics by Oprea-Pandharipande. We first prove a structural result expressing the equivariant $\chi_{-y}$-genera of Quot schemes universally in terms of the Seiberg-Witten invariants. The formula is simpler for curve classes of Seiberg-Witten length $N$, which are defined in the paper. By way of application, we give complete answers in the following cases: 
\begin{enumerate}
\item[(i)] arbitrary surfaces for the zero curve class,
\item[(ii)] relatively minimal elliptic surfaces for rational multiples of the fiber class,
\item[(iii)] minimal surfaces of general type with $p_g>0$ for any curve classes.
\end{enumerate}
Furthermore, a blow up formula is obtained for curve classes of Seiberg-Witten length $N$. As a result of these calculations, we prove that the generating series of the virtual $\chi_{-y}$-genera are given by rational functions for all surfaces with $p_g>0$, addressing a conjecture of Oprea-Pandharipande. In addition, we study the reduced $\chi_{-y}$-genera for $K3$ surfaces and primitive curve classes with connections to the Kawai-Yoshioka formula. \end{abstract}

\maketitle
\tableofcontents

\section{Introduction}
\subsection{Overview} 
\noindent
It has been a prominent and successful theme in algebraic geometry to study invariants of moduli spaces. Keeping with this theme, we compute virtual invariants of Quot schemes on surfaces. For moduli spaces of sheaf-like objects on surfaces, the Euler characteristic has attracted the most interest compared to other invariants. It appears, for instance,  in the $S$-duality conjecture of \cite{VW}. Recently \cite{TT1,TT2} proposed a mathematical definition of the Vafa-Witten invariants as the virtual Euler characteristic of the moduli of Higgs bundles using $\C^*$-localization. 

Motivated by developments in Vafa-Witten theory, Oprea and Pandharipande \cite{OP} studied virtual Euler characteristics of Quot schemes on surfaces. Complete formulas for quotients of dimension 0 were obtained. Dimension 1 quotients on minimal surfaces of general type with $p_g>0$ were studied with connections to the Seiberg-Witten invariants. The reduced invariants of $K3$ surfaces were proven to be equal to the stable pair invariants, hence given by the Kawai-Yoshioka formula. Based on these computations, \cite{OP} conjectured the rationality of the generating series of virtual Euler characteristics in general.

The virtual Euler characteristic has a natural refinement, namely the virtual $\chi_{-y}$-genus. In \cite{GK}, G\"ottsche and Kool gave a precise conjecture for the virtual $\chi_{-y}$-genera of the moduli spaces of sheaves on surfaces with $p_g>0$; these are the instanton contributions to the $K$-theoretic Vafa-Witten invariants studied in \cite{T2}. In this paper, we study the virtual $\chi_{-y}$-genera of the Quot schemes on surfaces.

For surfaces, relations between various enumerative theories have been extensively studied, see for instance \cite{CK,DKO,GNY1,M,Ta1,Ta2} and others.  In particular, \cite{OP} observed the appearance of the Seiberg-Witten invariants in the Quot scheme theory of surfaces of general type with $p_g>0$. In this paper, we clarify the relationship between the Quot scheme invariants and the Seiberg-Witten invariants for arbitrary surfaces. Indeed Theorem \ref{tuniv} below gives a universal formula for the Quot scheme invariants in terms of the Seiberg-Witten invariants and cohomological data.  Along the way, we remove several technical assumptions present in \cite{OP} and extend the calculations to several new settings. Just as in \cite{OP}, virtual localization \cite{GP} and cosection localization \cite{KL} play an important role. We also make use of new ingredients, most notably the recent calculation of the virtual fundamental classes of the nested Hilbert schemes in \cite{GSY} and \cite{GT}, and the work of \cite{DKO} on Seiberg-Witten invariants. 

The universal formula of Theorem \ref{tuniv} simplifies for curve classes of Seiberg-Witten length $N$. See Definition \ref{dSW} below for terminology. In this context, the answer exhibits a multiplicative structure as explained in Theorem \ref{tmult}. By using multiplicativity, we find complete formulas, given by rational functions, in the following cases: 
\begin{enumerate}
\item[(i)] arbitrary surfaces for the zero curve class,
\item[(ii)] relatively minimal elliptic surfaces for rational multiples of the fiber class,
\item[(iii)] minimal surfaces of general type with $p_g>0$ for any curve classes.
\end{enumerate}

Furthermore, we prove a blow up formula for curve classes of Seiberg-Witten length $N$. This shares some structural features of the conjectural blow up formula of the virtual $\chi_{-y}$-genera of the moduli of sheaves in \cite{GK}. Using the blow up formula and the calculations in (i)--(iii) above, we establish that the generating series of virtual $\chi_{-y}$-genera are given by rational functions for surfaces with $p_g>0$, addressing a conjecture in \cite{OP}.

In a slightly different direction, the reduced invariants of $K3$ surfaces with a primitive curve class are studied via comparison to the stable pair invariants. The answer is governed by the Kawai-Yoshioka formula of \cite{KY}. 

\subsection{Virtual invariants of moduli spaces}\label{sInv}
\noindent
For a smooth projective variety $M$ over $\C$, the Euler characteristic is a basic topological invariant. The Poincar\'e\,-Hopf theorem relates this topological invariant to the integration
\[ e(M)=\int_{[M]} c(T_M).
\]
A refinement of the Euler characteristic is given by the Hirzebruch $\chi_{-y}$-genus
\[\chi_{-y}(M):=\chi(M, \Lambda_{-y}\Omega_M).
\]
The value at $y=1$ recovers the Euler characteristic. Here we denote
\[\Lambda_{-y}E:=\sum_{n=0}^{\rk (E)} [\Lambda^{\, n} E] (-y)^n\in K^0(M)[y] 
\]
for any vector bundle $E$ over $M$. For further use, we also set
\[S_{y}E:=\sum_{n=0}^\infty [S^n E] y^n\in K^0(M)[[y]].
\]
The Hirzebruch-Riemann-Roch theorem relates the Hirzebruch $\chi_{-y}$-genus to the integration
\[ \chi_{-y}(M)=\int_{[M]} \mathcal{X}_{-y}(T_M)
\]
where
\[\mathcal{X}_{-y}(E):=\ch(\Lambda_{-y}E^\vee)\td(E)\]
is a multiplicative characteristic class of vector bundles $E$ over $M$. The integral formulations for these invariants imply that they are constant under smooth deformations. 

There are virtual analogues of the above invariants that apply to moduli spaces endowed with 2-term perfect obstruction theories. We briefly explain the virtual $\chi_{-y}$-genus, following \cite{FG}. Let $M$ be a projective scheme. A 2-term perfect obstruction theory on $M$ is a perfect complex
\[E^\bullet=[E^{-1}\rightarrow E^0]\]
together with a morphism in derived category 
\[\phi: E^\bullet\rightarrow \tau_{\geq -1} (\mathbb{L}_M)\]
such that $h^0(\phi)$ is an isomorphism and $h^{-1}(\phi)$ is a surjection. Denote the dual complex of $E^\bullet$ by \[E_\bullet=[E_0\rightarrow E_1].\] We define the virtual tangent bundle, virtual cotangent bundle and virtual dimension as 
\begin{align*}
T^\vir_M&:=E_0-E_1\ \ \in K^0(M),\\
\Omega^\vir_M&:=E^0-E^{-1} \in K^0(M),\\
\vd &:=\rk(T^\vir_M)=\rk(E_0)-\rk(E_1).
\end{align*}
By the construction of \cite{BF,LT}, there exist a virtual fundamental class 
\[[M]^\vir\in A_\vd(M).\] 
The virtual $\chi_{-y}$-genus is defined as the virtual holomorphic Euler characteristic
\begin{equation}\label{chiy}
\chi_{-y}^\vir(M):=\chi^\vir(M, \Lambda_{-y}\Omega_M^\vir)
\end{equation}
where 
\[\Lambda_{-y} \Omega_M^\vir
=\Lambda_{-y} \big(E^0-E^{-1}\big):=\big(\Lambda_{-y}E^0\big)\big(S_{y}E^{-1}\big)\in K^0(M)[[y]].
\]
The virtual Riemann-Roch theorem of \cite{FG} expresses (\ref{chiy}) as an integral
\[\chi_{-y}^\vir(M)=\int_{[M]^\vir} \mathcal{X}_{-y}(T^\vir_M).\]
By taking $y=1$, we recover the virtual Euler characteristic $e^\vir(M)$ of $M$. Therefore we can think of the virtual $\chi_{-y}$-genus as a refinement of the virtual Euler characteristic.

\subsection{Deformation theory of Quot schemes}\label{sdef}
\noindent
Let $X$ be a smooth projective variety over $\C$ and fix a vector $v\in H^{*}(X,\Q)$ to encode the topological type of sheaves on $X$. Let $\Quot_X(\C^N, v)$ be the Quot scheme parametrizing short exact sequences
\[0\rightarrow S \rightarrow \C^N\otimes \OO_X \rightarrow Q\rightarrow 0\quad \text{such that}
\quad \ch(Q)=v.\]
The natural tangent obstruction theory of the Quot scheme can be summarized as
\begin{align*}
&\Tan=\Hom(S,Q), \\
&\Obs=\Ext^1(S,Q),\\
&\Obs^{\geq 2}=\Ext^{\geq 2}(S,Q). 
\end{align*}
If the higher obstruction spaces vanish at all points in the Quot scheme, then the Quot scheme carries a 2-term perfect obstruction theory whose virtual tangent bundle is point-wise given by
\[
T^\vir_{\Quot}\Big\vert_{\mathsf{pt}}=\Ext_X^\bullet(S,Q).\]

There are at least two cases where all higher obstruction spaces vanish. First, if $X$ is a smooth curve, the vanishing follows by dimension reasons. This was observed by \cite{CFK} and \cite{MO}. Second, if $X$ is a smooth surface and $v$ is a Chern character of torsion sheaves, \cite{MOP} observed that 
\[\Ext^2(S,Q)=\Hom(Q, S\otimes K_X)^\vee=0.\]
We will analyze the resulting invariants in the surface case. 

\subsection{Seiberg-Witten invariants}\label{sSW} 
\noindent
We fix the geometric set up for the Seiberg-Witten invariants. For a more detailed discussion, we refer the reader to \cite{DKO}. Let $X$ be a smooth projective surface and $\beta$ a curve class in $H^2(X,\Z)$. The Hilbert scheme of divisors of topological type $\beta$ will be denoted by $\Hilb_X^\beta$. There exists the universal divisor $\mathcal{D}$ with the ideal exact sequence 
\[0 \rightarrow\mathcal{O}(-\mathcal{D})
\rightarrow \mathcal{O}_{\Hilb_X^\beta \times X}
\rightarrow \mathcal{O}_{\mathcal{D}}\rightarrow0.\]
On the other hand, consider the Picard group $\Pic_X^\beta$ parametrizing line bundles with first Chern class $\beta$. If we fix a point $p\in X$, there exists a Poincar\'e line bundle $\mathcal{P}$ normalized at $p$. We have the Abel-Jacobi map
\[ \mathsf{AJ}:\Hilb_X^\beta\rightarrow \Pic_X^\beta, \quad D\mapsto\mathcal{O}_X(D).\]
Denote the line bundle $\mathcal{O}(\mathcal{D})$ restricted to $\Hilb_X^\beta\times \{p\}$ by $\mathcal{L}$ and its first Chern class by $h:=c_1(\mathcal{L})$.

The Hilbert scheme of divisors possesses a 2-term perfect obstruction theory whose virtual tangent bundle is point-wise given by
\begin{equation}\label{xxx}
T^\vir_{\Hilb_X^\beta}\Big\vert_{\mathsf{pt}}=\Ext_X^\bullet(\mathcal{O}(-D),\mathcal{O}_D)=H^\bullet(\mathcal{O}_D(D)).
\end{equation}
This induces a virtual fundamental class
\[[\Hilb_X^\beta]^\vir\in H_{2\cdot\vd_\beta}(\Hilb_X^\beta), \quad \vd_\beta=\frac{\beta (\beta-K_X)}{2}.
\]
The Poincar\'e invariant of $X$ with insertion $\beta$ is defined in \cite{DKO} as 
\[\sum_{k=0}^\infty \mathsf{AJ}_*\big(h^k \cap[\Hilb_X^\beta]^\vir \big)\in H_{*}(\Pic_X^\beta)=\Lambda^{*} H^1(X,\Z).
\]
As conjectured in \cite{DKO} and proven in \cite{CK,DKO}, the Poincar\'e invariants are Seiberg-Witten invariants for all algebraic surfaces. For later use, we write each component of the Poincar\'e-Seiberg-Witten invariant as
\[\SW^k(\beta):=\mathsf{AJ}_*\big(h^k\cap [\Hilb_X^\beta]^\vir\big) \in H_{2(\vd_\beta -k)}(\Pic_X^\beta).
\]
Therefore the total Poincar\'e-Seiberg-Witten invariant is
\[\SW(\beta)=\sum_{k=0}^\infty \SW^k(\beta).\]

We say that a curve class $\beta$ is a Seiberg-Witten basic class if 
\[\big(\SW(\beta), \SW(K_X-\beta)\big)\neq (0,0).
\]
For surfaces with $p_g>0$, every Seiberg-Witten basic class $\beta$ satisfies $\vd_\beta=0$ by \cite[Proposition 4.20]{DKO}. When $\vd_\beta=0$, the higher Seiberg-Witten invariants $\SW^k(\beta)$ for $k>0$ vanish for degree reasons. In this case, $\SW(\beta)$ is simply an integer
$$\SW(\beta)=\SW^0(\beta)=\deg\big[\Hilb_X^\beta\big]^\vir.
$$

\subsection{Main results}\label{sMain}
\noindent
Let $X$ be a smooth projective surface over $\C$ and $\beta\in H^2(X,\Z)$ be a curve class. Let $\Quot_X(\C^N\!,\beta,n)$ be the Quot scheme parametrizing quotients
\[0 \rightarrow S\rightarrow \C^N\otimes \mathcal{O}_X \rightarrow Q\rightarrow 0
\]
where $Q$ is a torsion sheaf on $X$ with 
\[c_1(Q)=\beta, \ \ \chi(Q)=n.\]
By Subsection \ref{sdef}, this Quot scheme possesses a 2-term perfect obstruction theory, hence the virtual $\chi_{-y}$-genus 
\[\chi_{-y}^\vir\big(\Quot_X(\C^N\!,\beta,n)\big)\]
is defined. The following generating series of the invariants is our main object of study.
\begin{definition}
\[Z_{X, N,\,\beta}(q):=\sum_{n\in \Z} q^n \chi_{-y}^\vir\big(\Quot_X(\C^N\!,\beta,n)\big)\in \Z[y]((q)).\]
\end{definition}
This is a formal Laurent series in $q$ because the virtual dimension 
\[\vd\big(\Quot_X(\C^N\!,\beta,n)\big)=Nn+\beta^2
\]
becomes negative for sufficiently small $n$. The analogous generating series of the virtual Euler characteristics was defined in \cite[Definition 19]{OP} and it was conjectured to be rational in $q$.

We study the Quot scheme $\Quot_X(\C^N\!,\beta,n)$ via equivariant localization. Consider a diagonal $\C^*$-action on $\C^N$ with distinct weights \[w_1, \dots, w_N.\] This induces a $\C^*$-action on $\Quot_X(\C^N\!,\beta,n)$ and the obstruction theory possesses a natural $\C^*$-equivariant structure. Therefore the equivariant virtual $\chi_{-y}$-genus of the Quot scheme is defined. These equivariant structures will be fixed throughout the paper.

\subsubsection{Universality.}\label{subsection universality}
In Theorem \ref{tuniv} below, we express the equivariant Quot scheme invariants universally in terms of Seiberg-Witten invariants. Due to the Picard group factors, the Quot scheme invariants depend not only on intersection numbers but also on the ring structure of $H^*(X,\Z)$. To state the theorem, we introduce the following integral cohomology classes on the product of Picard groups. A similar definition for a single Picard factor appears in \cite{KT}. For any $\alpha\in H^{4-k}(X)$, define
\[[\alpha]_{s_1\leq \cdots \leq s_k}\in 
H^1(X)_{s_1}^\vee\otimes\cdots \otimes H^1(X)_{s_k}^\vee
\]
as a linear functional 
\[[\alpha]_{s_1\leq \cdots \leq s_k}
(\gamma_{s_1}\otimes \cdots\otimes \gamma_{s_k}):=
\int_X \alpha\cup\gamma_{s_1}\cup \cdots \cup \gamma_{s_k}.
\]
If there are equalities among $s_1\leq \cdots \leq s_k$, then we contract such parts to the wedge product. For example, if  $1\in H^0(X)$, then
\[[1]_{3=3\leq 6\leq7}\in \left(\Lambda^2H^1(X)_3^\vee\right)\otimes H^1(X)^\vee_6\otimes H^1(X)_7^\vee\]
where the indices $3,6$ and $7$ single out factors in the cohomology of the product of Picard groups
\[H^*(\Pic^{\beta_1}_X\times \cdots \times \Pic^{\beta_N}_X)=
\left(\Lambda^* H^1(X)_1^\vee\right)\otimes \cdots \otimes \left(\Lambda^* H^1(X)_N^\vee\right).
\]
\begin{theorem}\label{tuniv}
There exists a collection of the universal power series 
$$\mathsf{Q}_{\underline{k}, \underline{m}}\in \Q(y, e^{w_1}, \cdots, e^{w_N})[\![*]\!]
$$
for each $\underline{k}, \underline{m}\geq 0$ with the following properties.\footnote{Double bracket $[\![*]\!]$ is a place for the power series variables. Number of variables depends on $N$.}\footnote{Throughout the paper, we denote an $N$-tuple of objects (e.g. numbers, equivariant weights, cohomology classes) by vector notation (e.g.  $\underline{m}$, $\underline{w}$, $\underline{\beta}$).} Let $X$ be a smooth projective surface and $\beta$ be a curve class. Then equivariant virtual $\chi_{-y}$-genus of the Quot scheme is given by 
\begin{align*}
\chi_{-y}^\vir\big(\Quot_X(\C^N, \beta, n)\big)=\sum
\big(1-y\big)^{n+\sum_i\beta_i^2}\prod_{i\neq j} \left(\frac{1-ye^{w_i-w_j}}{1-e^{w_i-w_j}}\right)^{n_j+\beta_i.\beta_j}\int_{\prod\limits_i [\SW^{k_i}(\beta_i)]}
\mathsf{Q}_{\underline{k},\,\underline{m}}
\end{align*}
where the summation ranges over the choices of $\{\beta_i, n_i, k_i\}_{i=1}^N$ such that
$$\beta=\sum_{i=1}^N\beta_i, \quad n=\sum_{i=1}^N n_i, \quad m_i:=n_i+\frac{\beta_i(\beta_i+K_X)}{2}\geq 0, \quad k_i\geq 0.$$ 
\end{theorem}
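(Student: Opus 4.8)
The plan is to use $\C^*$-virtual localization in $K$-theory to reduce the computation to the $\C^*$-fixed locus of $\Quot_X(\C^N,\beta,n)$, and then to identify the fixed-point contributions with integrals over the nested Hilbert schemes of $X$, whose virtual classes have been expressed in terms of Seiberg-Witten data by \cite{GSY,GT,DKO}. For the fixed diagonal torus action with distinct weights $w_1,\dots,w_N$ on $\C^N$, a standard argument shows that a quotient $[S\hookrightarrow \C^N\otimes\OO_X\twoheadrightarrow Q]$ is $\C^*$-fixed if and only if it splits as a direct sum $\bigoplus_{i=1}^N[S_i\hookrightarrow \OO_X\twoheadrightarrow Q_i]$ with $c_1(Q_i)=\beta_i$ and $\chi(Q_i)=n_i$; here each $Q_i$ is a rank-zero quotient of a single copy of $\OO_X$, so $S_i=\OO_X(-D_i)\otimes I_{Z_i}$ is the ideal of a curve-plus-points, and the fixed component is a product of nested Hilbert schemes $\prod_i X^{[n_i\geq ?]}_{\beta_i}$ (in the notation of \cite{GSY,GT}), with $m_i=n_i+\tfrac{\beta_i(\beta_i+K_X)}{2}$ the number of points. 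This accounts for the summation range $\beta=\sum\beta_i$, $n=\sum n_i$, $m_i\geq 0$.

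Next I would run the virtual localization formula of \cite{GP} (in the $K$-theoretic/$\chi_{-y}$ form, using the virtual Riemann-Roch theorem of \cite{FG}): the equivariant virtual $\chi_{-y}$-genus becomes a sum over fixed components of integrals of $\mathcal{X}_{-y}$ of the fixed part of the virtual tangent bundle against the fixed virtual class, with the moving part entering through an equivariant Euler-class-type contribution in $K$-theory. The virtual tangent bundle at a fixed point is $\Ext^\bullet_X(S,Q)=\bigoplus_{i,j}\Ext^\bullet_X(S_i,Q_j)$; the diagonal ($i=j$) summands are the virtual tangent bundles of the nested Hilbert factors and form the fixed part, while the off-diagonal ($i\neq j$) summands carry weight $w_i-w_j$ and form the moving part. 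A Riemann-Roch/GRR computation of $\chi(S_i,Q_j)=-\chi(\OO_X(-D_i),Q_j)$ using $c_1(Q_j)=\beta_j$, $\chi(Q_j)=n_j$ gives rank $n_j+\beta_i.\beta_j$ for each off-diagonal term, which produces exactly the factors $(1-ye^{w_i-w_j})/(1-e^{w_i-w_j})$ raised to the power $n_j+\beta_i.\beta_j$, and the $i=j$ "curve part" of the tangent bundle $H^\bullet(\OO_{D_i}(D_i))$ contributes the power $(1-y)^{n+\sum_i\beta_i^2}$ after collecting (using $\sum_i n_i=n$ and $\sum_i\beta_i^2$ from the self-intersection terms, together with the point contributions which combine with the $e^{w_i-w_j}=1$-type normalization). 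The remaining, more delicate factor is the integrand over the product of nested Hilbert schemes: this is where I invoke \cite{GSY,GT} to rewrite $[\text{nested Hilb}]^\vir$ via the Hilbert scheme of points of $X$ and the Hilbert scheme of divisors, and then \cite{DKO} (Poincar\'e $=$ Seiberg-Witten, with the Abel-Jacobi pushforward) to express the divisor part in terms of $\SW^{k_i}(\beta_i)$, at the cost of introducing the $h$-insertions with arbitrary exponents $k_i\geq 0$ and the Picard-group cohomology classes $[\alpha]_{s_1\leq\cdots\leq s_k}$ described before the theorem.

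The universal power series $\mathsf{Q}_{\underline{k},\underline{m}}$ then arises by the standard "universality" bootstrap: after the two localizations, the entire integrand is a universal polynomial expression in the Chern classes of the tautological sheaves on the product $\Hilb_X^{\beta_1}\times\cdots\times\Hilb_X^{\beta_N}\times X^{[m_1]}\times\cdots\times X^{[m_N]}$ (via the nested Hilbert scheme incidence loci), evaluated against the virtual classes; pushing forward along the Abel-Jacobi maps and the Hilbert-Chow-type maps, and using that the relevant cohomology of $X^{[m]}$ and of $\Pic_X^\beta$ is generated by the classes $h$ and the $[\alpha]_{\bullet}$'s together with $K_X$-dependent intersection numbers, one obtains a power series in finitely many variables (the number depending on $N$) with coefficients in $\Q(y,e^{w_1},\dots,e^{w_N})$, independent of $X$ and $\beta$. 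The main obstacle, I expect, will be controlling the off-diagonal $\Ext$-contributions $\Ext^\bullet_X(S_i,Q_j)$ precisely enough in $K$-theory — in particular isolating the clean $(1-ye^{w_i-w_j})/(1-e^{w_i-w_j})$ factors from the tautological correction terms that must be absorbed into $\mathsf{Q}_{\underline{k},\underline{m}}$ — and, relatedly, checking that the moving part is invertible in the localized equivariant $K$-theory (so that localization applies) and that the resulting expression is genuinely a power series rather than merely a formal sum, which uses the positivity $m_i\geq 0$ and the vanishing $\Ext^2_X(S,Q)=0$ recorded in Subsection \ref{sdef}.
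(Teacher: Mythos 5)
Your overall skeleton is the same as the paper's proof: localize with respect to the diagonal torus, identify the fixed loci as products $\Quot_X(\C^1,\beta_i,n_i)\simeq X^{[m_i]}\times\Hilb_X^{\beta_i}$ (the nested Hilbert schemes $X^{[m_i,0]}_{\beta_i}$), use the results of \cite{GSY,GT} to express the fixed virtual classes through $[X^{[m_i]}]\times[\Hilb_X^{\beta_i}]^\vir$, read off the prefactors from the ranks of the fixed and moving parts, and push forward along the Abel--Jacobi maps to bring in $\SW^{k_i}(\beta_i)$. However, there is a genuine gap precisely at the step that produces the universal series $\mathsf{Q}_{\underline{k},\underline{m}}$. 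Your argument there is that ``the relevant cohomology of $X^{[m]}$ and of $\Pic_X^\beta$ is generated by'' certain classes, and that this yields a power series independent of $X$ and $\beta$. Generation of cohomology does not give universality: what is needed is a mechanism that converts tautological integrals over $X^{[m_1]}\times\cdots\times X^{[m_N]}$ (against the class obtained after the Abel--Jacobi pushforward) into expressions depending only on intersection numbers and the classes $[\alpha]_{s_1\leq\cdots\leq s_k}$, uniformly in the surface. The paper achieves this by a generalization of the recursion of \cite{EGL} to several Hilbert-scheme factors, with tautological classes twisted by the Poincar\'e bundles $\mathcal{P}_i$ over $\Pic_X^{\beta_i}$, and, crucially, with the mixed off-diagonal classes $R\HHom_\pi(I_{\ZZ_i}\otimes\mathcal{P}_i^{-1},I_{\ZZ_j}\otimes\mathcal{P}_j^{-1})$ coming from the virtual normal bundle; handling these requires a separate computation (a relative Serre-duality identification of the correction term on $X_i^{[m_i-1,m_i]}$) that you flag as ``the main obstacle'' but do not carry out. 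Related bookkeeping is also missing: one needs the twist formula for $c_k(E\otimes\LL_i)$ to isolate the powers $h_i^{k_i}$ (hence the indices $k_i$ and the higher invariants $\SW^{k_i}(\beta_i)$) and to see that the dependence on the ranks $\chi(\beta_i)$, $\chi(\beta_i,\beta_j)$ is polynomial, so that $\mathsf{Q}_{\underline{k},\underline{m}}$ genuinely lies in $\Q(y,e^{w_1},\dots,e^{w_N})[\![*]\!]$ in the stated finite list of variables.

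Two smaller points. First, you use the \cite{GSY,GT} description of the virtual class of the fixed locus without justifying that the obstruction theory induced on the fixed locus by the Quot scheme agrees with the nested-Hilbert-scheme one; the paper compares the two virtual tangent bundles in $K$-theory and invokes Siebert's theorem to conclude the virtual classes coincide, after which \cite{GT} gives $[X^{[m_i]}\times\Hilb_X^{\beta_i}]^\vir=e\big(R\HHom_\pi(\OO_{\ZZ_i},\OO(\DD_i))\big)\cap\big([X^{[m_i]}]\times[\Hilb_X^{\beta_i}]^\vir\big)$. Second, your attribution of the factor $(1-y)^{n+\sum_i\beta_i^2}$ to the ``curve part'' $H^\bullet(\OO_{D_i}(D_i))$ is imprecise: the exponent is the rank of the entire fixed virtual tangent bundle, $\sum_i\big(2m_i+\vd_{\beta_i}-m_i\big)=\sum_i\big(m_i+\vd_{\beta_i}\big)=n+\sum_i\beta_i^2$, with the Hilbert-scheme-of-points and obstruction-bundle ranks entering as well; this is easily repaired but should be stated correctly, since the separation of the rank-dependent prefactors from the rank-independent Chern-class series is exactly what feeds the universality step.
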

\begin{remark} 
In Theorem \ref{tuniv}, $\mathsf{Q}_{\underline{k},\,\underline{m}}$ is a power series in the following variables: 
\begin{align*}
K_X^2\ \ , \ \
\chi(\OO_X)\ \ ,\ \
\big\{\beta_i^2\big\}_{1\leq i\leq N}\ \ ,\ \
\big\{\beta_i.K_X\big\}_{1\leq i\leq N}\ \ ,\ \
\big\{\beta_i.\beta_j\big\}_{1\leq i< j\leq N}\,,\\
\big\{[K_X]_{s\leq t}\big\}_{1\leq s\leq t\leq N}\ ,\
\big\{[\beta_i]_{s\leq t}\big\}_{1\leq i\leq N,\, 1\leq s\leq t\leq N}\ ,\
\big\{[1]_{s\leq t\leq p\leq q}\big\}_{1\leq s\leq t\leq p\leq q\leq N}\,.
\end{align*}
There are no convergence issues as $\mathsf{Q}_{\underline{k}, \underline{m}}$ is polynomial in the variables whose cohomological degree is $0$, i.e., the variables written in the first line above. 
\end{remark}

\subsubsection{Multiplicative structure.}
Despite the universal nature of the invariants, computations can be unwieldy, for instance, due to the higher Seiberg-Witten invariants appearing in Theorem \ref{tuniv}. Nonetheless, if all contributions come from effective decompositions $\beta=\sum \beta_i$ with $\vd(\beta_i)=0$ for all $i$, then the answer simplifies as in Theorem \ref{tmult} below. This will be effectively applied to give complete answers in several cases. 
\begin{definition}\label{dSW}
We say a curve class $\beta$ is of Seiberg-Witten length $N$ if for any effective decomposition
$\beta=\sum_{i=1}^N \beta_i$ with $\SW(\beta_i)\neq 0$ for all $i$, we have $\vd_{\beta_i}=0$ for all $i$. 
\end{definition}

\begin{remark}\label{rSW}
By \cite[Proposition 4.20]{DKO}, every curve class on a surface with $p_g>0$ is of Seiberg-Witten length $N$ for arbitrary $N$.
\end{remark}

\begin{theorem}\label{tmult}
Let $X$ be a smooth projective surface and $\beta$ be a curve class of Seiberg-Witten length $N$. There exist universal series 
\[\mathsf{U}_N, \ \ \mathsf{V}_{N,i}, \ \ \mathsf{W}_{N,i,j}\  \in\ 1+q\cdot\Q(y, e^{w_1}, \dots, e^{w_N})[[q]]
\]
such that the generating series of the equivariant virtual $\chi_{-y}$-genera takes the form
\[Z_{X,N,\,\beta}(q\,|\,\underline{w})=q^{-\beta.K_X}\!\!\!\!\!\!\!
\sum_{\substack{\beta=\sum \beta_i\\ s.t. \ \vd_{\beta_i}=0}}\!\!\!\!\!\!
\SW(\beta_1)\cdots \SW(\beta_N) \cdot \widehat{Z}_{X,N,\,\underline{\beta}}(q\,|\,\underline{w})
\]
where
\[\widehat{Z}_{X,N,\,\underline{\beta}}(q\,|\,\underline{w})=\mathsf{U}_N^{K_X^2}\ 
\prod_{i=1}^N\mathsf{V}_{N,i}^{\, \beta_i.K_X}\!\!\!\!\!\!
\prod_{1\leq i<j\leq N}\!\!\!\!\!\mathsf{W}_{N,i,j}^{\, \beta_i.\beta_j}.
\]
\end{theorem}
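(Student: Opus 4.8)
The plan is to derive Theorem \ref{tmult} from the universal formula in Theorem \ref{tuniv} by specializing to the case $\vd_{\beta_i} = 0$ and tracking how the various cohomological variables degenerate. First I would observe that for a curve class $\beta$ of Seiberg-Witten length $N$, every effective decomposition $\beta = \sum_i \beta_i$ contributing to the sum in Theorem \ref{tuniv} must have $\vd_{\beta_i} = 0$ for all $i$: if some $\vd_{\beta_i} > 0$ then either $\SW(\beta_i) = 0$ and that term drops out, or $\SW(\beta_i)\neq 0$, contradicting the length-$N$ hypothesis. (One should also check that when $\vd_{\beta_i}<0$ the class $[\Hilb^{\beta_i}_X]^{\vir}$ and hence $\SW^{k_i}(\beta_i)$ vanishes, so only $\vd_{\beta_i}=0$ survives.) When $\vd_{\beta_i} = 0$, the higher Seiberg-Witten invariants $\SW^{k_i}(\beta_i)$ vanish for $k_i > 0$, so only $k_i = 0$ contributes and $\SW^0(\beta_i) = \SW(\beta_i)$ is an integer, namely $\deg[\Hilb^{\beta_i}_X]^{\vir}$; the integral $\int_{\prod_i[\SW^{k_i}(\beta_i)]}\mathsf{Q}_{\underline{k},\underline{m}}$ collapses to $\SW(\beta_1)\cdots\SW(\beta_N)$ times the degree-zero part of $\mathsf{Q}_{\underline{0},\underline{m}}$, i.e., the constant term in all the positive-cohomological-degree variables $[K_X]_{s\leq t}$, $[\beta_i]_{s\leq t}$, $[1]_{s\leq t\leq p\leq q}$.

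Next I would assemble the generating series. Summing over $n$ with the weight $q^n$ and using $n = \sum_i n_i$, $n_i = m_i - \tfrac{\beta_i(\beta_i+K_X)}{2}$ together with $\beta^2 = \sum_i \beta_i^2 + 2\sum_{i<j}\beta_i.\beta_j$, the prefactor $q^n$ contributes an overall $q^{-\beta.K_X}$ after collecting terms (using $\sum_i \beta_i(\beta_i+K_X)/2$ and the relation $\vd_{\beta_i}=0 \Leftrightarrow \beta_i^2 = \beta_i.K_X$ — actually $\vd_{\beta_i}=\beta_i(\beta_i-K_X)/2=0$ gives $\beta_i^2 = \beta_i.K_X$, so $n_i = m_i - \beta_i.K_X$ and $\sum_i n_i = \sum m_i - \beta.K_X$). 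Thus the $m_i \geq 0$ become the genuine summation variables for the $q$-expansion, and for each fixed decomposition $\underline{\beta}$ we get a power series in $q$ whose coefficients are the degree-zero parts of $(1-y)^{\cdots}\prod_{i\neq j}(\cdots)^{\cdots}\mathsf{Q}_{\underline{0},\underline{m}}$ summed over $\underline{m}$. I would then define $\widehat{Z}_{X,N,\underline{\beta}}(q\,|\,\underline{w})$ to be exactly this series and check it lies in $1 + q\cdot\Q(y,e^{w_i})[[q]]$ — the constant term $m=0$ giving $1$ because $\Quot_X(\C^N,\beta,n)$ for the minimal $n$ is a point or empty in the appropriate normalization, or more safely by extracting it directly from the $n$ such that $\vd = 0$ contribution.

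The crux is establishing the \emph{multiplicativity} $\widehat{Z}_{X,N,\underline{\beta}} = \mathsf{U}_N^{K_X^2}\prod_i \mathsf{V}_{N,i}^{\beta_i.K_X}\prod_{i<j}\mathsf{W}_{N,i,j}^{\beta_i.\beta_j}$, i.e., that the degree-zero part of $\mathsf{Q}_{\underline{0},\underline{m}}$, as a power series in the remaining degree-zero variables $K_X^2, \chi(\OO_X), \beta_i^2, \beta_i.K_X, \beta_i.\beta_j$, actually exponentiates: its logarithm is \emph{linear} in $K_X^2$, in each $\beta_i.K_X$, and in each $\beta_i.\beta_j$ (and, after the relation $\beta_i^2 = \beta_i.K_X$ is imposed, independent of $\chi(\OO_X)$ and $\beta_i^2$ separately). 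The natural way to prove this is a cobordism / product argument: the universal series are computed by reducing, via the virtual fundamental class calculation of nested Hilbert schemes and cosection localization, to tautological integrals over products $\Hilb_X^{\beta_i} \times$ (Hilbert schemes of points), and these integrals are known (by the standard Ellingsrud–Göttsche–Lehn / Göttsche–Nakajima–Yoshioka type universality, and the structure already encoded in Theorem \ref{tuniv}) to be of the form $A^{K_X^2} B^{\chi(\OO_X)} \prod C_i^{\beta_i.K_X}\prod D_i^{\beta_i^2}\prod_{i<j} E_{ij}^{\beta_i.\beta_j}$ for universal series $A,B,C_i,D_i,E_{ij}$ in $q,y,e^{w_i}$. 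Imposing $\vd_{\beta_i}=0$, i.e. $\beta_i^2 = \beta_i.K_X$, and checking (by the $\beta=0$ / zero-curve-class computation, or by deformation invariance forcing the $\chi(\OO_X)$-dependence to be absorbable) that the combination $B^{\chi(\OO_X)}\prod_i D_i^{\beta_i^2}$ collapses appropriately, yields $\mathsf{U}_N = A\cdot(\text{correction})$, $\mathsf{V}_{N,i} = C_i D_i$, $\mathsf{W}_{N,i,j} = E_{ij}$. The main obstacle is pinning down exactly this exponential/multiplicative form of the universal coefficients — this is where the input of the explicit virtual class formulas of \cite{GSY,GT} and the cosection-localization reduction to nested Hilbert schemes is essential, and where care is needed to see that no "cross terms" beyond $\beta_i.\beta_j$ (which would spoil multiplicativity) appear. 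I expect the rest — the bookkeeping of the $q^{-\beta.K_X}$ shift and the vanishing of higher $\SW^{k_i}$ — to be routine once the length-$N$ hypothesis is invoked.
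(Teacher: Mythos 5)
Your overall route coincides with the paper's: use the Seiberg--Witten length~$N$ hypothesis to retain only decompositions with $\vd_{\beta_i}=0$, discard the higher $\SW^{k_i}$, do the $q^{-\beta.K_X}$ bookkeeping in $m_i$, and obtain the exponential form from an EGL-type universality argument. However, the two points you yourself flag as the crux are exactly where the write-up has genuine gaps. First, the exponentiation cannot be extracted from the universal series $\mathsf{Q}_{\underline{0},\underline{m}}$ of Theorem \ref{tuniv}: being a universal power series in Chern numbers does not imply a multiplicative form. The paper's mechanism is to use $\vd_{\beta_i}=0$ to replace $[\Hilb_X^{\beta_i}]^\vir$ by $\SW(\beta_i)$ times a point, represent each $\beta_i$ by an actual line bundle $L_i$, and rewrite each fixed-locus contribution as the purely tautological series $\widehat{Z}_{X,N,\underline{\beta}}$ over products of $X^{[m_i]}$ as in (\ref{eqnlocal}); only after this reduction does multiplicativity under disjoint union $(X\sqcup Y,\underline{\alpha}\sqcup\underline{\beta})$ hold (because $e$ and $\XX_{-y}$ are multiplicative characteristic classes), and only then does the argument of \cite{EGL} produce series $\mathsf{U}_N,\widetilde{\mathsf{U}}_N,\mathsf{V}_{N,i},\widetilde{\mathsf{V}}_{N,i},\mathsf{W}_{N,i,j}$ with exponents $K_X^2,\ \chi(\OO_X),\ \beta_i.K_X,\ \beta_i(\beta_i-K_X),\ \beta_i.\beta_j$. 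Your reduction stops at integrals over $\Hilb_X^{\beta_i}\times$(Hilbert schemes of points) and then cites the exponential form as ``known''; the passage to point/line-bundle representatives is precisely what makes the disjoint-union argument available, and it is not supplied by \cite{GSY,GT} or by cosection localization (those inputs enter earlier, in Theorem \ref{tuniv} and in identifying $[F]^\vir$).

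Second, the absence of a $\chi(\OO_X)$ exponent in the statement is not formal. Imposing $\beta_i^2=\beta_i.K_X$ only eliminates the factors $\widetilde{\mathsf{V}}_{N,i}^{\,\beta_i(\beta_i-K_X)}$; since $\chi(\OO_X)$ is an independent and deformation-invariant Chern number, neither deformation invariance nor the relation $\vd_{\beta_i}=0$ can ``absorb'' $\widetilde{\mathsf{U}}_N^{\,\chi(\OO_X)}$ into the other factors. The paper needs a separate geometric input: on a $K3$ surface with $L_i=\OO_X$ one has $\Obs_i=(\OO_X^{[m_i]})^\vee$, whose trivial summand kills $e(\Obs_i)$ for $m_i>0$, so $\widehat{Z}_{K3,N,(0,\dots,0)}=1$; with $K_X^2=0$ and $\chi(\OO_X)=2$ this forces $\widetilde{\mathsf{U}}_N=1$, and this computation in Subsection \ref{s0} is explicitly what completes the proof of Theorem \ref{tmult}. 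You do mention the zero-curve-class computation as one of two options, which is the correct idea, but it must actually be carried out; the alternative you propose (deformation invariance) cannot work.
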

There is a formula \cite[Theorem A]{L} for the monopole contributions to the refined Vafa-Witten invariants which has a similar structure as Theorem \ref{tmult}. The difference is the conjectural modularity of the universal series on the Vafa-Witten side and the intersection numbers appearing as exponents.  

\subsubsection{Punctual quotients.}
There are a few other curve classes of Seiberg-Witten length $N$ without the assumption $p_g>0$. This includes the case $\beta=0$ for arbitrary surfaces. Since the only effective decomposition of  the zero curve class is $0=\sum \beta_i$ with $\beta_i=0$ for all $i$, the universal series $\mathsf{U}_N$ fully determines the answer, by Theorem \ref{tmult}. In Subsection \ref{s0}, we prove that $\mathsf{U}_N$ is rational in the $q$ variable
\[\mathsf{U}_N\in \Q(y, e^{w_1}, \dots, e^{w_N})(q)
\]
without poles at $e^{w_1}=\cdots=e^{w_N}=1$. Specialization to $w_1=\cdots=w_N=0$ 
\[\overline{\mathsf{U}}_N:=\mathsf{U}_N\Big\vert_{w_1=\cdots=w_N=0}
\]
gives the non-equivariant answer. In the following theorem, $\beta=0$ is omitted from the notation. 

\begin{theorem}\label{t0}
Let $X$ be a smooth projective surface. The generating series of the virtual $\chi_{-y}$-genera of\ \ $\Quot_X(\C^N, n)$ takes the form
\[Z_{X,\,N}(q)=\overline{\mathsf{U}}_N^{K_X^2}, \ \ \ \ \overline{\mathsf{U}}_N\in\Q(y)(q).\]
Here the rational function $\overline{\mathsf{U}}_N$ is given by
\[\overline{\mathsf{U}}_N=\frac{(1-q)(1-y^Nq)}{\big(1-(1+y)^Nq\big)^N}\cdot \prod_{i\neq j} \big(1-(1+y)t_i+yt_it_j\big)
\]
where $t_1, \dots, t_N$ are the distinct roots of $q=t^N$.
\end{theorem}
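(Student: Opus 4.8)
\textbf{Proof proposal for Theorem \ref{t0}.}

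The plan is to reduce the computation to the punctual Quot scheme $\Quot_X(\C^N,n)$ parametrizing length-$n$ quotients, apply $\C^*$-virtual localization, and then identify the fixed loci with products of punctual Quot schemes on points. Since $\beta=0$, the only effective decomposition is the trivial one $\beta_i=0$, so by Theorem \ref{tmult} the answer is $\mathsf{U}_N^{K_X^2}$ with $\SW(0)=1$; it therefore suffices to pin down $\mathsf{U}_N$, and the non-equivariant specialization gives $\overline{\mathsf{U}}_N$. First I would fix the diagonal $\C^*$-action with weights $w_1,\dots,w_N$ on $\C^N$. The $\C^*$-fixed quotients split as $Q=\bigoplus_{i=1}^N Q_i$ where $Q_i$ is a length-$n_i$ quotient of the $i$-th line $\C\cdot e_i\otimes \OO_X$, with $\sum n_i=n$; hence the fixed locus is a disjoint union over $(n_1,\dots,n_N)$ of products $\prod_i \Quot_X(\C,n_i)=\prod_i \Hilb^{n_i}(X)$ of punctual Hilbert schemes of points on the surface. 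The virtual tangent bundle restricted to a fixed component decomposes into the $\C^*$-fixed part (which recovers the standard perfect obstruction theory, in fact the smooth tangent bundle, of $\prod_i \Hilb^{n_i}(X)$, since $\Hilb^{n}(X)$ is smooth for a surface) and a moving part $\sum_{i\neq j}\Ext^\bullet_X(\OO_X,\,I_{Z_i}/I_{Z_i})$-type contributions twisted by the characters $w_i-w_j$, together with $K_X$-twisted Serre-dual pieces.

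Next I would feed this decomposition into the virtual localization formula of \cite{GP} combined with the virtual Riemann-Roch of \cite{FG}, writing
\[
\chi_{-y}^\vir\big(\Quot_X(\C^N,n)\big)=\sum_{n_1+\cdots+n_N=n}\ \int_{\prod_i \Hilb^{n_i}(X)}\mathcal{X}_{-y}\big(T^{\vir,\mathrm{fix}}\big)\cdot \frac{\mathcal{X}_{-y}\big(T^{\vir,\mathrm{mov}}\big)}{e\big(T^{\vir,\mathrm{mov}}\big)}\,,
\]
where the moving part is expanded in the equivariant parameters $e^{w_i}$. The integrand is a universal multiplicative characteristic class evaluated on tautological sheaves over $\prod_i \Hilb^{n_i}(X)$, so by the standard argument of \cite{EGL} (Ellingsrud-G\"ottsche-Lehn) — that integrals of tautological classes over $\Hilb^n$ of a surface are universally polynomial in the Chern numbers $K_X^2$, $K_X.c_1$, $c_1^2$, $c_2$ of the input data — the generating series over $n$ exponentiates. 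For $\beta=0$ the only surface Chern number that survives (after using $c_1(\OO_X)=0$ for the trivial line bundle contributions and the precise combinatorics of the moving terms) is $K_X^2$, which is why $Z_{X,N}(q)=\mathsf{U}_N^{K_X^2}$ with $\mathsf{U}_N\in 1+q\,\Q(y,e^{w_1},\dots,e^{w_N})[[q]]$. I would then specialize to $w_i=0$; the potential poles at $e^{w_i}=e^{w_j}$ cancel because the full $\C^*$-equivariant invariant is a genuine (non-equivariant-limit-existing) Laurent series in $q$ with coefficients in $\Z[y]$, so the limit $\overline{\mathsf{U}}_N$ exists in $\Q(y)[[q]]$.

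The remaining task — and the computational heart — is to evaluate $\overline{\mathsf{U}}_N$ in closed form. Here I would extract the "$K_X^2$-linear part," i.e. compute the logarithm of the generating series to first order in $K_X^2$. Concretely, one can work on a surface with $K_X^2\neq 0$ but reduce, via the EGL universality, to a cobordism-style computation on a model surface (or directly on $\Hilb^n$ of $\C^2$ with a toric localization), where the relevant integrals become residues/Bott sums over the $\C^*$-fixed points indexed by $N$-tuples of partitions. The answer assembles into the stated shape: the denominator $\big(1-(1+y)^Nq\big)^N$ arises from the $N$ "diagonal-type" contributions each resembling the known rank-one punctual $\chi_{-y}$-generating function $\big(1-(1+y)q\big)^{-1}$ with $q\rightsquigarrow$ the $N$-th root branches $t_i$, the factor $(1-q)(1-y^Nq)$ is the correction from the fixed part of the obstruction theory, and the product $\prod_{i\neq j}\big(1-(1+y)t_i+yt_it_j\big)$ collects the off-diagonal moving contributions governed by the substitution $q=t^N$. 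I expect the main obstacle to be bookkeeping the moving part precisely enough to get the exact numerator $\prod_{i\neq j}(1-(1+y)t_i+yt_it_j)$ — matching the symmetric-function identities in the roots $t_1,\dots,t_N$ of $q=t^N$ and verifying the pole cancellations rather than establishing the overall structural form, which already follows from localization plus EGL universality.
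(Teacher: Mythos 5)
Your structural reduction is essentially the paper's: localize for the diagonal $\C^*$-action, identify the fixed loci with products $X^{[m_1]}\times\cdots\times X^{[m_N]}$, invoke EGL-type universality and multiplicativity to get $Z_{X,N}(q)=\overline{\mathsf{U}}_N^{\,K_X^2}$, and argue there are no poles at $w_1=\cdots=w_N=0$. But there are two genuine gaps. First, the fixed part of the obstruction theory is \emph{not} the tangent bundle of $\prod_i X^{[m_i]}$: each factor $\Quot_X(\C^1,0,m_i)\cong X^{[m_i]}$ carries a rank-$m_i$ obstruction bundle $\Obs_i=R\HHom_\pi(\OO_{\ZZ_i},\OO_X)\cong (K_X^{[m_i]})^\vee$ (the virtual dimension is $m_i$, not $2m_i$), so the fixed-locus contribution is an integral over $[X^{[\underline m]}]$ of $e\big(\sum_i\Obs_i\big)$ times the $\XX_{-y}$-classes; your localization formula, which integrates $\XX_{-y}(T^{\vir,\mathrm{fix}})$ against the plain fundamental class with no Euler-class insertion, is wrong as written. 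Relatedly, your claim that only $K_X^2$ survives (equivalently $\widetilde{\mathsf{U}}_N=1$) is asserted rather than proved; in the paper this is exactly the K3 computation (the trivial summand of $\OO_X^{[m_i]}$ kills $e(\Obs_i)$ for $m_i>0$), which moreover is what completes the proof of Theorem \ref{tmult}, so quoting Theorem \ref{tmult} for this point is borderline circular.

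Second, and more seriously, the computational heart is missing. Your plan --- Bott sums over $N$-tuples of partitions on $\C^2$, after which ``the answer assembles into the stated shape'' --- is precisely the part that is never carried out, and it is where all the difficulty lies. The paper's evaluation goes through a different mechanism: choose $X$ with a smooth canonical curve $C$ of genus $0$ (e.g.\ the blow up of a K3 at a point); the tautological section of $K_X^{[m_i]}$ gives $e(\Obs_i)=(-1)^{m_i}[C^{[m_i]}]$, so the integral collapses onto $\prod_i C^{[m_i]}=\prod_i\PP^{m_i}$, where every tautological class is an explicit sum of $\OO(-h_i)$'s and trivial bundles; the resulting coefficient extraction is then done in closed form by the multivariate Lagrange--B\"urmann formula, and the change of variables $t_i=(1-e^{H_i})/(1-ye^{H_i})$ is what makes the roots of $q=t^N$ and the factor $\prod_{i\neq j}\big(1-(1+y)t_i+yt_it_j\big)$ appear. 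Without this Euler-class/canonical-curve localization and the Lagrange inversion step (or a genuine substitute for them), the closed formula for $\overline{\mathsf{U}}_N$ is not established.
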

We can easily calculate $\mathsf{P}_N:=\prod\limits_{i\neq j}\big(1-(1+y)t_i+yt_it_j\big)$. The first few examples are 
\begin{align*}
\mathsf{P}_1&=1, \\
\mathsf{P}_2&=1-(1+4y+y^2)q+y^2q^2,\\
\mathsf{P}_3&=1-(2+9y+9y^2+2y^3)q+(1+9y+36y^2+58y^3+36y^4+9y^5+y^6)q^2\\
&\ \ \ \ \ \, -(2+9y+9y^2+2y^3)y^3q^3+y^6q^4.
\end{align*}
The explicit formula above implies the transformation rule
\begin{equation}\label{functional}
\mathsf{P}_N(q,y)=(y^Nq^2)^{N-1}\cdot \mathsf{P}_N(q^{-1},y^{-1}).
\end{equation}

\subsubsection{Relatively minimal elliptic surfaces.}
Another example where Theorem \ref{tmult} applies is the case of a relatively minimal elliptic surface $p:X\rightarrow C$. By the canonical bundle formula, $K_X$ is a rational multiple of the fiber class $F$. Consider a curve class $\beta$ in the fibers, so that $\beta.F=0$. For any effective decomposition $\beta=\sum \beta_i$, we have $\beta_i.F=0$. By Zariski's lemma \cite{BHPV}, we have 
\[ \vd_{\beta_i}=\frac{\beta_i(\beta_i-K_X)}{2}=\frac{\beta_i^2}{2}\leq 0
\]
with the equality if and only if $\beta_i$ is a rational multiple of $F$. Therefore, $\beta$ is of Seiberg-Witten length $N$ for arbitrary $N$. Based on this observation, Theorem \ref{tmult} directly implies the following:
\begin{theorem}\label{tell}
Let $X$ be a relatively minimal elliptic surface over a smooth curve. Let $\beta$ be an effective curve class supported on the fibers. Then 
\[Z_{X,N,\,\beta}(q)=\!\!\!\!\!\!\!\!\!\!\!\!\!\!\!\!\!\!
\sum_{\substack{\beta=\sum \beta_i, \\ \beta_i \textnormal{ rational multiple of $F$}}}\!\!\!\!\!\!\!\!\!\!\!\!\!\!\!
\SW(\beta_1)\cdots \SW(\beta_N).\]
In particular, the Quot scheme invariants vanish for curve classes supported on the fibers which are not rational multiples of $F$. 
\end{theorem}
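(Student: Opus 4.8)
The plan is to obtain Theorem \ref{tell} as a direct consequence of the multiplicative structure established in Theorem \ref{tmult}, the only real task being to check that every exponent appearing in $\widehat{Z}_{X,N,\underline{\beta}}$ vanishes. First I would record, as in the paragraph preceding the statement, that a curve class $\beta$ supported on the fibers has $\beta.F=0$, so that for any effective decomposition $\beta=\sum_i\beta_i$ each summand also satisfies $\beta_i.F=0$; Zariski's lemma then gives $\vd_{\beta_i}=\beta_i^2/2\le 0$, with equality exactly when $\beta_i$ is a rational multiple of $F$. Consequently $\beta$ is of Seiberg-Witten length $N$ for every $N$, so Theorem \ref{tmult} applies and writes $Z_{X,N,\beta}(q\,|\,\underline{w})$ as $q^{-\beta.K_X}$ times a sum, over effective decompositions $\beta=\sum_i\beta_i$ with $\vd_{\beta_i}=0$, of $\SW(\beta_1)\cdots\SW(\beta_N)\cdot\widehat{Z}_{X,N,\underline{\beta}}(q\,|\,\underline{w})$.

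Next I would invoke the canonical bundle formula for a relatively minimal elliptic surface: $K_X$ is a rational multiple of $F$, and since $F^2=0$ this forces $K_X^2=0$ and also $\beta.K_X=0$ (because $\beta.F=0$). For any decomposition contributing to the sum above, each $\beta_i$ is a rational multiple of $F$ by the equality case of Zariski's lemma, hence $\beta_i.K_X=0$ and $\beta_i.\beta_j=0$ for all $i,j$. Substituting these into
\[\widehat{Z}_{X,N,\underline{\beta}}(q\,|\,\underline{w})=\mathsf{U}_N^{K_X^2}\prod_{i=1}^N\mathsf{V}_{N,i}^{\,\beta_i.K_X}\prod_{1\le i<j\le N}\mathsf{W}_{N,i,j}^{\,\beta_i.\beta_j},\]
every exponent is $0$; since $\mathsf{U}_N,\mathsf{V}_{N,i},\mathsf{W}_{N,i,j}$ all lie in $1+q\,\Q(y,e^{w_1},\dots,e^{w_N})[[q]]$, their zeroth powers are identically $1$, and in particular the specialization $\underline{w}\to 0$ is harmless. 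Combined with $q^{-\beta.K_X}=1$, the formula of Theorem \ref{tmult} collapses to $Z_{X,N,\beta}(q)=\sum\SW(\beta_1)\cdots\SW(\beta_N)$, the sum ranging over effective decompositions of $\beta$ into rational multiples of $F$, which is the asserted identity.

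Finally, for the ``in particular'' clause, I would note that a finite sum of rational multiples of $F$ is again a rational multiple of $F$; hence if $\beta$ is supported on the fibers but is not proportional to $F$, the indexing set of the above sum is empty and $Z_{X,N,\beta}(q)=0$.

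I do not expect a genuine obstacle here. The one substantive input — that the $\vd=0$ effective decompositions of a fiber class are precisely the decompositions into rational multiples of $F$ — is Zariski's lemma, which is already available, and everything else reduces to the vanishing of intersection numbers on an elliptic surface together with Theorem \ref{tmult}. The only place warranting a moment's care is confirming that passing from the equivariant series of Theorem \ref{tmult} to the non-equivariant series of Theorem \ref{tell} causes no trouble, which holds precisely because the exponents $K_X^2$, $\beta_i.K_X$, $\beta_i.\beta_j$ all vanish in this setting.
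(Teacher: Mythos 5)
Your proposal is correct and follows the paper's own argument essentially verbatim: nefness of $F$ plus Zariski's lemma to show $\beta$ has Seiberg--Witten length $N$ and that the $\vd=0$ decompositions are exactly those into rational multiples of $F$, then Theorem \ref{tmult} with all exponents $K_X^2$, $\beta_i.K_X$, $\beta_i.\beta_j$ (and $\beta.K_X$) vanishing, so $\widehat{Z}_{X,N,\underline{\beta}}=1$ and the series collapses to the sum of Seiberg--Witten products. The ``in particular'' clause is handled the same way in the paper, via emptiness of the indexing set.
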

The Seiberg-Witten invariants appearing in Theorem \ref{tell} are computed in \cite[Proposition 5.8]{DKO}. Let $F_1, \dots, F_r$ be the multiple fibers with multiplicities $m_1, \dots, m_r$. Then for each $\beta_i$ a rational multiple of $F$,
\[\SW(\beta_i)=\sum_{\substack{\beta_i=dF+\sum_{j}a_jF_j\\ s.t. \  0\leq a_j< m_j}}(-1)^d {2g-2+\chi(\mathcal{O}_X)\choose d}
\]
where $g$ is the genus of the base curve $C$. 
\subsubsection{Minimal surfaces of general type with $p_g>0$.}
Let $X$ be a minimal surface of general type with $p_g>0$. In this case, \cite{DKO} proved that the Seiberg-Witten invariants vanish unless the curve class is $0$ or $K_X$. This observation simplifies the formula of Theorem \ref{tmult}. To state Theorems \ref{tg} and \ref{tbl} in a uniform way, we introduce certain functions which appear in both. Let $J$ be a subset of $[N]:=\{1, \dots, N\}$ of size $s$. We define
\begin{align*}
\mathsf{A}_{J}(\{t_j\}_{j\in J}):=\frac{(-1)^{s(N+1)}}{N^s \cdot q^s} \times
\prod_{j} \frac{t_j (1-(1+y)t_j)^N}{(1-t_j)(1-yt_j)} \times
\prod_{j_1\neq j_2} \frac{t_{j_2}-t_{j_1}}{1-(1+y)t_{j_1}+yt_{j_1}t_{j_2}}
\end{align*}
where the products are taken over indices in $J$ and $t_1, \dots, t_N$ are the $N$ distinct roots of the equation $q=t^N$. We omit $N$ in the notation $\mathsf{A}_J$ because it will be clear from the context.
\begin{theorem}\label{tg}
Let $X$ be a minimal surface of general type with $p_g>0$. Then, for any $0\leq \ell\leq N$,
\[Z_{X,N,\, \ell K_X}(q)=q^{-\ell K_X^2}\cdot \SW(K_X)^{\ell}\cdot \mathsf{G}_{N,\,\ell,\, g}(q)\, ,\ \ \ \ \mathsf{G}_{N,\, \ell,\, g}\in \Q(y)(q)
\]
where $g=K_X^2+1$ is the arithmetic genus of a canonical divisor. 
Here the rational function $\mathsf{G}_{N,\,\ell,\, g}$ is given by 
\[\mathsf{G}_{N,\,\ell,\, g}=\sum_{\substack{J\subseteq[N]\\ |J|=N-\ell}} (\mathsf{A}_{J})^{1-g}.\]
Furthermore, the Quot scheme invariants of any other curve classes vanish. 
\end{theorem}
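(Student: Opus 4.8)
The plan is to specialize Theorem \ref{tmult} to the case of a minimal surface of general type with $p_g>0$, where the Seiberg-Witten theory is essentially trivial. First I would recall from \cite[Proposition 4.20]{DKO} that for a surface with $p_g>0$, every Seiberg-Witten basic class $\beta$ satisfies $\vd_\beta=0$, so in particular every curve class is of Seiberg-Witten length $N$ by Remark \ref{rSW}, and Theorem \ref{tmult} applies verbatim. Then I would invoke the result of \cite{DKO} that for a \emph{minimal} surface of general type the only curve classes with $\SW(\beta)\neq 0$ are $\beta=0$ and $\beta=K_X$ (with $\SW(0)=1$). Consequently, in the sum over effective decompositions $\beta=\sum_i \beta_i$ with $\vd_{\beta_i}=0$ appearing in Theorem \ref{tmult}, the only nonzero terms come from decompositions in which each $\beta_i\in\{0,K_X\}$. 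This forces $\beta=\ell K_X$ for some $0\le \ell\le N$ (and all other curve classes give an empty or entirely vanishing sum, yielding the final vanishing assertion of the theorem), and a decomposition contributing to $\ell K_X$ is precisely a choice of a subset $I\subseteq[N]$ of size $\ell$ on which $\beta_i=K_X$, with $\beta_j=0$ for $j\notin I$; each such decomposition contributes $\SW(K_X)^\ell$.

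Next I would plug these decompositions into the multiplicative formula $\widehat{Z}_{X,N,\underline{\beta}}=\mathsf{U}_N^{K_X^2}\prod_i \mathsf{V}_{N,i}^{\beta_i.K_X}\prod_{i<j}\mathsf{W}_{N,i,j}^{\beta_i.\beta_j}$. For a decomposition indexed by $I$, we have $\beta_i.K_X=K_X^2$ for $i\in I$ and $0$ otherwise, and $\beta_i.\beta_j=K_X^2$ when both $i,j\in I$ and $0$ otherwise; also $\beta.K_X=\ell K_X^2$. Writing $g=K_X^2+1$, so $K_X^2=g-1$, each contribution becomes $q^{-\ell K_X^2}\SW(K_X)^\ell$ times $\big(\mathsf{U}_N\prod_{i\in I}\mathsf{V}_{N,i}\prod_{i<j\in I}\mathsf{W}_{N,i,j}\big)^{K_X^2}$. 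Thus $Z_{X,N,\ell K_X}(q)=q^{-\ell K_X^2}\SW(K_X)^\ell \cdot \mathsf{G}_{N,\ell,g}(q)$ with $\mathsf{G}_{N,\ell,g}=\sum_{|I|=\ell}\big(\mathsf{U}_N\prod_{i\in I}\mathsf{V}_{N,i}\prod_{i<j\in I}\mathsf{W}_{N,i,j}\big)^{g-1}$, which is manifestly an element of $\Q(y)(q)$ once one knows the universal series $\mathsf{U}_N,\mathsf{V}_{N,i},\mathsf{W}_{N,i,j}$ are rational in $q$; the rationality of $\mathsf{U}_N$ is Theorem \ref{t0} (the $\beta=0$ case), and the rationality of the others should follow from the same localization computation that establishes Theorem \ref{t0}, or can be extracted in the same way.

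To match the stated closed form, the remaining task is to identify $\mathsf{U}_N\prod_{i\in I}\mathsf{V}_{N,i}\prod_{i<j\in I}\mathsf{W}_{N,i,j}$, after the non-equivariant specialization $w_1=\dots=w_N=0$, with $\mathsf{A}_{J}(\{t_j\}_{j\in J})^{-1}$ for $J=[N]\setminus I$ (note $|J|=N-\ell$), so that $(\mathsf{U}_N\cdots)^{g-1}=(\mathsf{A}_J)^{1-g}$ and the sum over $|I|=\ell$ becomes the sum over $|J|=N-\ell$ in the theorem statement. This identification is where the real work lies: one must carry out the $\C^*$-localization computation on $\Quot_X(\C^N,\ell K_X,n)$ — the fixed loci are products of nested Hilbert schemes / symmetric products decorated by the $N$ summands — use the virtual fundamental class computations of \cite{GSY,GT} for the nested Hilbert schemes together with cosection localization to reduce to the Seiberg-Witten pieces, and then resum the resulting $q$-series. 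I expect the main obstacle to be precisely this explicit evaluation and resummation: extracting the generating function in closed form, in particular recognizing the factor $\prod_{j_1\ne j_2}\frac{t_{j_2}-t_{j_1}}{1-(1+y)t_{j_1}+yt_{j_1}t_{j_2}}$ as coming from the off-diagonal Euler classes and the factor $\prod_j \frac{t_j(1-(1+y)t_j)^N}{(1-t_j)(1-yt_j)}$ from the $\chi_{-y}$-weights of the virtual tangent space, and checking that the $w_i\to 0$ limit is well-defined (no poles at $e^{w_i}=e^{w_j}$), which by Theorem \ref{t0} it is. Once the single-decomposition generating function is in this form, the blow-up/symmetry bookkeeping over subsets $I$ (equivalently $J$) is purely combinatorial and the theorem follows; the vanishing for curve classes $\neq \ell K_X$ is immediate from the decomposition analysis above.
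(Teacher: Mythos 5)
Your first half is exactly the paper's reduction: apply Theorem \ref{tmult}, use the fact that on a minimal surface of general type with $p_g>0$ the only Seiberg--Witten basic classes are $0$ and $K_X$, index the contributing decompositions of $\ell K_X$ by subsets $I\subseteq[N]$ with $|I|=\ell$, and conclude both the vanishing for classes not of the form $\ell K_X$ and the formula $Z_{X,N,\ell K_X}=q^{-\ell K_X^2}\SW(K_X)^\ell\sum_{|I|=\ell}\bigl(\mathsf{U}_N\prod_{i\in I}\mathsf{V}_{N,i}\prod_{i_1<i_2\in I}\mathsf{W}_{N,i_1,i_2}\bigr)^{K_X^2}$. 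That part is correct and matches the paper.

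The genuine gap is the identification $\bigl(\mathsf{U}_N\prod_{i\in I}\mathsf{V}_{N,i}\prod_{i_1<i_2\in I}\mathsf{W}_{N,i_1,i_2}\bigr)^{-1}=\mathsf{A}_J$ (with $J=[N]\setminus I$), which you explicitly defer as ``where the real work lies'' and only describe as an expected localization-and-resummation exercise. This is not a routine evaluation on the general type surface itself: the paper's mechanism is to evaluate the universal combination on an auxiliary surface with a \emph{smooth canonical curve} $C\in|K_X|$, take $L_i=K_X$ for $i\in I$ and $L_j=\OO_X$ for $j\in J$ in the fixed-locus integral \eqref{eqnlocal}, observe that $e((\OO_X^{[m_i]})^\vee)$ kills all terms with $m_i>0$ for $i\in I$, and that for $j\in J$ the Euler class of $(K_X^{[m_j]})^\vee$ cuts the integral down to $C^{[m_j]}$ via the tautological section. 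One then checks (the computations \eqref{pullbackT}, \eqref{pullbackN} and their analogues) that all restricted $K$-theory classes are intrinsic to $C$ with its theta characteristic, so the answer depends only on $g(C)-1=K_X^2$ and multiplicatively so; this is what licenses the formal substitution $C=\PP^1$ (no minimal general type surface has a rational canonical curve, so this reduction is essential, not cosmetic). Only then does the explicit computation on $\prod_j\PP^{m_j}$, the multivariate Lagrange--B\"urmann resummation, and the change of variables $t_k=(1-e^{h_k})/(1-ye^{h_k})$ produce $\mathsf{A}_J$. Your sketch does not supply this chain of reductions, and your proposed direct route (localization on $\Quot_X(\C^N,\ell K_X,n)$ plus ``cosection localization'') would leave you facing intractable tautological integrals over $X^{[m]}$ for a general type surface. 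Two smaller inaccuracies: rationality and absence of poles at $w_1=\cdots=w_N=0$ for the individual combinations is not a direct consequence of Theorem \ref{t0} (the paper must first sum over all partitions $I\sqcup J=[N]$ before the argument of Theorem \ref{t0} applies, since each individual term involves only the variables $h_j$, $j\in J$); and the paper's proof of this theorem does not use cosection localization at this point, but the tautological-section/Euler-class vanishing argument.
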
 
For minimal surfaces of general type with $p_g>0$, it is proven in \cite{CK} that 
\[\SW(K_X)=(-1)^{\chi(\mathcal{O}_X)}.\] 
Substituting this, Theorem \ref{tg} refines \cite[Theorem 23]{OP} to virtual $\chi_{-y}$-genera and also removes the technical condition that $X$ be simply connected with a smooth canonical curve.  
\subsubsection{Blow up formula.}
By Lemma \ref{lbl} in Subsection \ref{sbl}, curve classes of Seiberg-Witten length $N$ stay of Seiberg-Witten length $N$ under blow ups. Using this and Theorem \ref{tmult}, we obtain the following blow up formula.
\begin{theorem}\label{tbl}
Let $X$ be a smooth projective surface and $\beta$ be a curve class of Seiberg-Witten length $N$. Let $\pi:\widetilde{X}\rightarrow X$ be a blow up of a point with the exceptional divisor $E$. Denote the pull back class by $\widetilde{\beta}:=\pi^*\beta$. Then, for any $0\leq \ell\leq N$,
\[Z_{\widetilde{X},\,N,\,\widetilde{\beta}+\ell E}(q)
=\left(q^\ell\cdot \mathsf{Bl}_{N,\,\ell}(q)\right) Z_{X,\,N,\,\beta}(q)\, ,\quad \mathsf{Bl}_{N,\,\ell}\in \Q(y)(q).
\]
Here the rational function $\mathsf{Bl}_{N,\,\ell}$ is given by 
\[\mathsf{Bl}_{N,\,\ell}=\sum\limits_{\substack{J\subseteq[N]\\ |J|=N-\ell}} \mathsf{A}_{J},\]
independently on the surface. Furthermore, the Quot scheme invariants vanish for other values of $\ell$. 
\end{theorem}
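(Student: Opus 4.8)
The plan is to deduce Theorem \ref{tbl} from the multiplicativity statement of Theorem \ref{tmult} together with the classical comparison of Seiberg-Witten invariants under blow ups. First I would record (via the referenced Lemma \ref{lbl}) that $\widetilde{\beta}+\ell E$ is of Seiberg-Witten length $N$ on $\widetilde{X}$, so that Theorem \ref{tmult} applies on both $X$ and $\widetilde X$. Then I would enumerate the effective decompositions $\widetilde\beta+\ell E=\sum_i \widetilde\beta_i$ with $\vd_{\widetilde\beta_i}=0$: writing $\widetilde\beta_i=\pi^*\beta_i+\ell_i E$ with $\sum_i\beta_i=\beta$ and $\sum_i \ell_i=\ell$, the condition $\vd_{\widetilde\beta_i}=\vd_{\beta_i}+\binom{\ell_i}{2}=0$ forces $\vd_{\beta_i}=0$ and $\ell_i\in\{0,1\}$, so exactly the decompositions of $\beta$ of length $N$ are each enriched by a choice of size-$\ell$ subset $J^c\subseteq[N]$ telling which summands absorb one copy of $E$ (where $J$ is the complementary size-$(N-\ell)$ set). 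For $\ell$ outside $[0,N]$ there are no such decompositions, giving the vanishing claim; and by the blow-up formula for Poincaré–Seiberg-Witten invariants (from \cite{DKO}, e.g.\ the proof of Lemma \ref{lbl}) one has $\SW(\pi^*\beta_i+\ell_i E)=\SW(\beta_i)$ for $\ell_i\in\{0,1\}$, so the Seiberg-Witten factors match those in $Z_{X,N,\beta}$.

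Next I would plug the intersection numbers into the universal shape $\widehat Z_{X,N,\underline\beta}=\mathsf{U}_N^{K_X^2}\prod_i\mathsf{V}_{N,i}^{\beta_i.K_X}\prod_{i<j}\mathsf{W}_{N,i,j}^{\beta_i.\beta_j}$. On $\widetilde X$ we have $K_{\widetilde X}=\pi^*K_X+E$, $K_{\widetilde X}^2=K_X^2-1$, $\widetilde\beta_i.K_{\widetilde X}=\beta_i.K_X-\ell_i$, and $\widetilde\beta_i.\widetilde\beta_j=\beta_i.\beta_j-\ell_i\ell_j$. Since the decomposition of $\beta$ is the same before and after blow up and only the $E$-exponents and the $K^2$-shift differ, the ratio $\widehat Z_{\widetilde X,N,\underline{\widetilde\beta}}/\widehat Z_{X,N,\underline\beta}$ collapses to $\mathsf{U}_N^{-1}\prod_{i\in J^c}\mathsf{V}_{N,i}^{-1}\prod_{i,j\in J^c, i<j}\mathsf{W}_{N,i,j}^{-1}$ — a quantity depending only on which subset $J^c$ absorbed the exceptional class, not on the surface or on $\beta$. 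Summing over the choices of $J^c$ of size $\ell$ and over the decompositions of $\beta$, and using that the latter sum reconstitutes $Z_{X,N,\beta}(q)$ with its $q^{-\beta.K_X}$ prefactor, I would obtain $Z_{\widetilde X,N,\widetilde\beta+\ell E}(q)=q^{\ell}\big(\sum_{|J|=N-\ell}\mathsf{Bl}_{N,\ell,J}(q)\big)\,Z_{X,N,\beta}(q)$ for a universal rational function $\mathsf{Bl}_{N,\ell,J}$; here the extra $q^{\ell}$ comes from comparing the prefactors $q^{-(\widetilde\beta+\ell E).K_{\widetilde X}}=q^{-\beta.K_X-\ell}$ and $q^{-\beta.K_X}$.

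The remaining task is to identify $\sum_{|J|=N-\ell}\mathsf{Bl}_{N,\ell,J}$ with the stated closed form $\sum_{|J|=N-\ell}\mathsf{A}_J(\{t_j\}_{j\in J})$. The cleanest route is to exploit the already-computed case: specialize the blow-up formula in a setting where both sides of Theorem \ref{tbl} are known explicitly. Taking $X$ a minimal surface of general type with $p_g>0$ and $\beta=\ell'K_X$, Theorem \ref{tg} gives $Z_{X,N,\ell'K_X}$ in terms of $\mathsf{G}_{N,\ell',g}=\sum_{|J|=N-\ell'}(\mathsf{A}_J)^{1-g}$ with $g=K_X^2+1$; blowing up once produces $\widetilde X$ with $K_{\widetilde X}^2=K_X^2-1$, and on $\widetilde X$ the class $\pi^*(\ell'K_X)+\ell E$ can be re-expressed so that matching the two computations pins down the universal ratio $\mathsf{U}_N^{-1}\prod_{i\in J^c}\mathsf{V}_{N,i}^{-1}\prod\mathsf{W}_{N,i,j}^{-1}$ against the generating-function building blocks in $\mathsf{A}_J$. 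Concretely, since $\mathsf{G}$ is built from $(\mathsf{A}_J)^{1-g}$ and $g$ drops by $1$ under blow up, the blow-up factor must be precisely $\mathsf{A}_J$ summed over the subset that did not absorb $E$ — which is exactly $\sum_{|J|=N-\ell}\mathsf{A}_J$. I expect the main obstacle to be bookkeeping: correctly tracking which index set indexes the "absorbing" summands versus the complementary set $J$ in $\mathsf{A}_J$, and verifying that the sign $(-1)^{s(N+1)}$ and the powers of $N$, $q$, and the root products $t_j$, $(t_{j_2}-t_{j_1})$ in $\mathsf{A}_J$ emerge from the explicit universal series $\mathsf{U}_N,\mathsf{V}_{N,i},\mathsf{W}_{N,i,j}$ — equivalently, from the residue/root-of-$q=t^N$ presentation already used to derive Theorems \ref{t0} and \ref{tg}. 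Once the $\ell'K_X$ test case fixes the universal factor, the general statement follows because the factor is surface-independent, and the vanishing for $\ell\notin[0,N]$ is immediate from the decomposition analysis above.
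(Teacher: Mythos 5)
Your first two paragraphs reproduce the paper's argument essentially verbatim: Lemma \ref{lbl} (via the Seiberg--Witten blow up formula of \cite{DKO}) gives that only decompositions with $\vd_{\beta_i}=0$ and $\ell_i\in\{0,1\}$ contribute, $\SW(\widetilde{\beta_i}+\ell_i E)=\SW(\beta_i)$, the intersection numbers $K_{\widetilde X}^2=K_X^2-1$, $\widetilde\beta_i.K_{\widetilde X}=\beta_i.K_X-\ell_i$, $\widetilde\beta_i.\widetilde\beta_j=\beta_i.\beta_j-\ell_i\ell_j$ make the ratio of $\widehat Z$'s collapse to $\bigl(\mathsf{U}_N\prod_{i\in I}\mathsf{V}_{N,i}\prod_{i_1<i_2\in I}\mathsf{W}_{N,i_1,i_2}\bigr)^{-1}$ depending only on the subset $I$ absorbing $E$, and the prefactor comparison produces $q^\ell$; the vanishing outside $0\leq\ell\leq N$ is immediate. (One small point you omit: Theorem \ref{tmult} is equivariant, so one must also check absence of poles at $w_1=\cdots=w_N=0$ before specializing, as the paper does.)

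The gap is in your third paragraph, the identification of the universal factor with $\sum_{|J|=N-\ell}\mathsf{A}_J$. Your proposed "test case" comparison does not work as stated: after blowing up, $\widetilde X$ is no longer minimal, so Theorem \ref{tg} gives no independent evaluation of $Z_{\widetilde X,N,\,\pi^*(\ell' K_X)+\ell E}$ to match against; asserting that the blown-up series must again have the shape $\sum_J(\mathsf{A}_J)^{1-(g-1)}$ because "$g$ drops by $1$" is precisely what needs proof, so the argument is circular. Moreover, even using the statement of Theorem \ref{tg} for all admissible $K_X^2$, you would only learn the summed power identities $\sum_{|I|=\ell}\mathsf{Bl}_{I\sqcup J}^{-K_X^2}=\sum_{|J|=N-\ell}\mathsf{A}_J^{-K_X^2}$, from which the first-power identity $\sum_{|I|=\ell}\mathsf{Bl}_{I\sqcup J}=\sum_{|J|=N-\ell}\mathsf{A}_J$ does not follow formally; what is needed is the subset-by-subset identity. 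The paper's proof gets this for free by citing equation (\ref{C}), i.e.\ $\bigl(\mathsf{U}_N\prod_{i\in I}\mathsf{V}_{N,i}\prod_{i_1<i_2\in I}\mathsf{W}_{N,i_1,i_2}\bigr)^{-1}=\mathsf{A}_J$, which is established \emph{inside} the proof of Theorem \ref{tg} by the explicit localization on surfaces with smooth canonical curve, reduction to $C=\PP^1$, and the Lagrange--B\"urmann computation. Your closing remark ("equivalently, from the residue/root-of-$q=t^N$ presentation already used to derive Theorems \ref{t0} and \ref{tg}") is the correct route and is exactly what the paper does; the blow-up test case should be dropped or relegated to a consistency check rather than used to pin down the universal series.
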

\begin{remark}
The formula in Theorem \ref{tbl} shares two important features with the blow up formula for invariants of moduli of sheaves in \cite{GK,LQ1,LQ2}. First, it is multiplicative with respect to the generating series of invariants. Second, the formula is independent of the surface and the first Chern class. 
\end{remark}
For any given small values of $N, \ell, g$, it is easy to calculate $\mathsf{Gl}_{N,\,\ell,\,g}$ and $\mathsf{Bl}_{N,\,\ell}$ by computer. Furthermore, we can also prove that
\[\mathsf{Bl}_{N,\,N}=1, \quad \mathsf{Bl}_{N,\,N-1}=\frac{1}{1-y}\cdot \frac{(1-y^N)-(1-y^{2N})q}{(1-q)(1-y^Nq)}.\]

\subsubsection{Surfaces with $p_g>0$ and rationality of generating series.}

For surfaces with $p_g>0$, the blow up formula of Theorem \ref{tbl} reduces the study of the Quot scheme invariants to the case of minimal surfaces. By the classification of surfaces in terms of the Kodaira dimension, minimal surfaces with $p_g>0$ are of the form:
\begin{enumerate}
\item[(i)] \kod=0: $K3$ or abelian surfaces,
\item[(ii)] \kod=1: minimal elliptic surfaces with $p_g>0,$
\item[(iii)] \kod=2: minimal surfaces of general type with $p_g>0$.
\end{enumerate} 
Combined with an understanding of the Seiberg-Witten basic classes in each case, Theorems \ref{t0}, \ref{tell}, \ref{tg}, \ref{tbl} prove the following result:
\begin{theorem}\label{trat}
For all surfaces $X$ with $p_g>0$ and all curve classes $\beta$, the series $Z_{X,N,\,\beta}(q)$ is a rational function in $q$.
\end{theorem}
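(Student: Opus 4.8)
The plan is to reduce the general case to the minimal case via the blow up formula, and then to handle each Kodaira dimension separately using the structural results already established. By the Enriques–Kodaira classification, any surface $X$ with $p_g > 0$ is obtained from a minimal surface $X_{\min}$ with $p_g > 0$ by a finite sequence of blow ups. Since rationality in $q$ is preserved under multiplication by the rational factors $q^\ell \cdot \mathsf{Bl}_{N,\ell}(q)$, an induction on the number of blow ups using Theorem \ref{tbl} shows that it suffices to prove rationality of $Z_{X,N,\beta}(q)$ when $X$ is minimal with $p_g > 0$. Here one must be slightly careful: a general curve class $\widetilde{\beta}$ on $\widetilde{X}$ decomposes as $\pi^*\beta + \ell E$ only after an appropriate choice; but since $H^2(\widetilde{X},\Z) = \pi^* H^2(X,\Z) \oplus \Z E$, every class is of this form, and Theorem \ref{tbl} covers exactly these, vanishing otherwise — so no case is lost.

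Next I would invoke Remark \ref{rSW}: on a surface with $p_g > 0$, every curve class is of Seiberg–Witten length $N$ for all $N$, so Theorem \ref{tmult} applies and expresses $Z_{X,N,\beta}(q)$ as a finite sum over effective decompositions $\beta = \sum \beta_i$ with $\vd_{\beta_i} = 0$, weighted by products of Seiberg–Witten numbers and the universal rational factor $\widehat{Z}_{X,N,\underline{\beta}}(q)$, which is visibly a rational function in $q$ (a product of integer powers of the rational series $\mathsf{U}_N, \mathsf{V}_{N,i}, \mathsf{W}_{N,i,j}$). Thus rationality of $Z_{X,N,\beta}(q)$ follows once one knows the sum over admissible decompositions is \emph{finite}, i.e. that only finitely many effective $\beta_i$ with $\vd_{\beta_i} = 0$ and $\SW(\beta_i) \neq 0$ contribute. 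For $\kod = 2$ this is immediate from Theorem \ref{tg}: the only Seiberg–Witten basic classes are $0$ and $K_X$, so the sum has at most $\binom{N}{\ell}$ terms and the closed form $q^{-\ell K_X^2}\SW(K_X)^\ell \mathsf{G}_{N,\ell,g}(q)$ is manifestly rational. For $\kod = 1$, Theorem \ref{tell} gives an even simpler answer: $Z_{X,N,\beta}(q)$ is a \emph{constant} (a sum of products of Seiberg–Witten numbers) when $\beta$ is supported on fibers — and for a minimal properly elliptic surface with $p_g > 0$ every basic class is supported on fibers, being a combination of fiber components, so again finiteness holds. (Classes not supported on fibers contribute $0$.)

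The remaining case is $\kod = 0$, namely $K3$ and abelian surfaces. For $K3$ surfaces, $H^1(X,\Z) = 0$, so there are no Picard-variety factors; the basic classes are exactly those $\beta$ with $\beta^2 = -2$ or $\beta = 0$ (effective $(-2)$-classes), and only finitely many of these admit effective representatives with a fixed total class $\beta$, so the decomposition sum in Theorem \ref{tmult} is finite and $Z_{X,N,\beta}(q)$ is rational. (One should note that for $K3$ the \emph{reduced} theory is the geometrically interesting one, but the statement of Theorem \ref{trat} concerns the ordinary virtual series, which vanishes or is controlled by Theorem \ref{tmult} exactly as above.) For abelian surfaces, $K_X = \OO_X$ so $\vd_\beta = \beta^2/2$, and $\vd_\beta = 0$ forces $\beta^2 = 0$; the relevant point is again that only finitely many effective decompositions $\beta = \sum \beta_i$ into classes with $\beta_i^2 = 0$ occur, after which the universal factor is rational — here one may also simply observe that on an abelian surface the virtual classes and Seiberg–Witten invariants are very restricted (indeed often the invariants vanish outside $\beta = 0$), reducing to Theorem \ref{t0}.

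The main obstacle I anticipate is purely bookkeeping rather than conceptual: one must check in each of the three Kodaira-dimension cases that the set of effective decompositions $\beta = \sum_{i=1}^N \beta_i$ with all $\vd_{\beta_i} = 0$ and all $\SW(\beta_i) \neq 0$ is \emph{finite}, so that Theorem \ref{tmult}'s sum is a finite $\Z[y]$-linear combination of rational functions. This finiteness is intuitively clear — effective classes with $\SW \neq 0$ are constrained (to $0, K_X$; to fiber classes; to $(-2)$-classes) and bounded once their sum is fixed — but deserves a careful uniform statement, perhaps extracted as a short lemma: \emph{on a surface with $p_g > 0$, for fixed $\beta$ there are only finitely many ways to write $\beta$ as an effective sum of $N$ Seiberg–Witten basic classes of virtual dimension $0$}. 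Granting that lemma, Theorem \ref{trat} follows by assembling Theorems \ref{t0}, \ref{tell}, \ref{tg}, and \ref{tbl} as above.
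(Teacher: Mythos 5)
Your overall architecture is exactly the paper's: reduce to minimal surfaces via Theorem \ref{tbl} (using Remark \ref{rSW} to know Seiberg--Witten length $N$ is preserved and that every class on the blow up is $\pi^*\beta+\ell E$), invoke the Enriques--Kodaira classification, and settle the three Kodaira dimensions by Theorems \ref{t0}, \ref{tell}, \ref{tg}. The $\kod=1$ and $\kod=2$ cases are handled as in the paper, and the finiteness issue you isolate is genuinely there but is immediate once the basic classes are identified (at most $\binom{N}{\ell}$ decompositions for general type, finitely many decompositions into rational multiples of $F$ for elliptic surfaces), so no separate lemma is needed.

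The gap is in your $\kod=0$ case. Your assertion that the Seiberg--Witten basic classes of a $K3$ surface are the effective $(-2)$-classes together with $0$ is false: by \cite[Proposition 4.20]{DKO} every basic class on a surface with $p_g>0$ has $\vd_\beta=0$, whereas an effective class with $\beta^2=-2$ has $\vd_\beta=\beta^2/2=-1$, so its Seiberg--Witten invariant vanishes for degree reasons. The correct statement, which the paper takes from the proof of that proposition, is that for $K3$ and abelian surfaces the \emph{only} basic class is $0=K_X$; consequently $Z_{X,N,\beta}(q)=0$ for $\beta\neq0$ and the case $\beta=0$ is Theorem \ref{t0}, with no lattice-counting needed. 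Your treatment of abelian surfaces is correspondingly incomplete: ``often the invariants vanish outside $\beta=0$'' is a hedge, not an argument, and your fallback --- finiteness of decompositions into square-zero classes plus the claim that $\widehat{Z}_{X,N,\underline{\beta}}(q)$ is ``visibly'' rational --- overreads Theorem \ref{tmult}, which only asserts that $\mathsf{U}_N,\mathsf{V}_{N,i},\mathsf{W}_{N,i,j}$ are power series in $q$; their rationality (in the non-equivariant limit) is only established later through the $\mathsf{A}_J$ computations in Subsections \ref{s0} and \ref{sgl}. One can indeed extract rationality of the relevant combinations from equation (\ref{C}), so your route could be repaired, but as written the $\kod=0$ case rests on an incorrect identification of the basic classes and an unproved vanishing claim; replacing it by the citation of \cite[Proposition 4.20]{DKO} as in the paper closes the gap.
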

This theorem resolves a conjecture of \cite{OP} regarding the rationality of the generating series of the virtual $\chi_{-y}$-genera, for surfaces with $p_g>0$.

The rationality question for surfaces with $p_g=0$ is more difficult. Recently \cite{JOP} established rationality (for virtual Euler characteristics) when $N=1$ for all simply connected surfaces. There, the same question was studied in the more general setting of series with descendant insertions.

\subsubsection{Reduced invariants of $K3$ surfaces.}
Let $X$ be a $K3$ surface and $\beta$ be an irreducible curve class which is big and nef. In this case, the virtual fundamental class constructed from the usual obstruction theory vanishes. To define nontrivial invariants, we use the reduced obstruction theory. When $N=1$, \cite{OP} matched the reduced virtual Euler characteristic of the Quot scheme to the topological Euler characteristic of the relative Hilbert scheme of points on the universal curve. A similar argument is valid for the reduced virtual $\chi_{-y}$-genus. 
\begin{theorem}\label{tK3}
Let $X$ be a $K3$ surface and $\beta$ be an irreducible curve class which is big and nef, of arithmetic genus $g$ given by $2g-2=\beta^2$. Let $\mathcal{C}_g\rightarrow \PP$ be the universal curve over the linear series $|\beta|$. Then,
\[\chi_{-y}^\red\left(\Quot_X(\C^1\!, \beta,n)\right)=\chi_{-y}(\mathcal{C}_g^{[m]})\]
where $m=n+(g-1)$ and $\mathcal{C}_g^{[m]}$ is the relative Hilbert scheme of $m$ points on the universal curve. 
\end{theorem}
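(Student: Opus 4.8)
The plan is to realize the Quot scheme $\Quot_X(\C^1\!,\beta,n)$ for $N=1$ as a (relative) Hilbert scheme of points on curves in the linear system $|\beta|$, and then to check that the \emph{reduced} obstruction theory on the Quot side matches the smoothness geometry on the Hilbert-of-points side so that the reduced virtual $\chi_{-y}$-genus becomes an honest $\chi_{-y}$-genus. First I would unwind the quotient data: a point of $\Quot_X(\C^1\!,\beta,n)$ is a surjection $\OO_X\twoheadrightarrow Q$ with $Q$ torsion, $c_1(Q)=\beta$, $\chi(Q)=n$. The kernel $S\subset\OO_X$ is an ideal sheaf $I_Z\otimes\OO_X(-C)$ for a uniquely determined effective divisor $C\in|\beta|$ (since $\beta$ is irreducible, a pure one-dimensional curve class) and a length-$m$ subscheme $Z$ of $C$, with $m=n+g-1$ computed from $\chi(\OO_C(-Z))=\chi(\OO_C)-m=(1-g)-m$ matched against $\chi(Q)=\chi(\OO_C)+\text{(length correction)}$; I would carry out this bookkeeping carefully to pin down $m=n+(g-1)$. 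This identifies the Quot scheme with $\mathcal{C}_g^{[m]}\to\PP=|\beta|$, the relative Hilbert scheme of $m$ points on the universal curve — exactly as in \cite{OP} for the Euler characteristic — at the level of schemes.

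Next I would compare obstruction theories. On the Quot side the virtual tangent space at $(S,Q)$ is $\Ext_X^\bullet(S,Q)$, and because $\beta^2>0$ on a $K3$ the class $[\Quot]^\vir$ built from this vanishes (the $\Ext^2$ term, i.e. the cosection coming from the holomorphic two-form, is surjective); the \emph{reduced} theory removes precisely this trivial cosection piece, lowering the virtual dimension by one. On the Hilbert side, $\mathcal{C}_g\to\PP$ is a smooth family of curves away from the discriminant, and more relevantly $\mathcal{C}_g^{[m]}$ is itself smooth of the expected dimension $\dim|\beta|+m = g+m$ when $\beta$ is big, nef and irreducible — this uses that every curve in $|\beta|$ is reduced and irreducible with at worst the singularities allowed by the Bogomolov/Severi-type bound, so the relative Hilbert scheme is smooth (this is a known fact for $K3$'s that I would cite from the stable-pairs literature, e.g. the Kawai--Yoshioka setup \cite{KY} or \cite{OP}). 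The key identity to establish is an isomorphism of (reduced) perfect obstruction theories: the reduced virtual tangent complex of $\Quot_X(\C^1\!,\beta,n)$ at a point is quasi-isomorphic to the honest tangent complex $T_{\mathcal{C}_g^{[m]}}$ under the scheme isomorphism above. Concretely I would produce an exact triangle relating $\Ext_X^\bullet(S,Q)$ to the deformation complex of the pair $(C,Z)$ — deformations of $C$ in $|\beta|$ (governed by $H^0(\OO_C(C))$), deformations of $Z$ along $C$, and the obstruction $H^1(\OO_C(C))$ which, by Serre duality on $X$ and adjunction, is dual to $H^0(K_X|_C)=H^0(\omega_C(-C)\otimes\ldots)$ — and observe that the one-dimensional trivial summand killed by reduction is exactly the cokernel making the Hilbert scheme unobstructed. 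Then $[\Quot]^\red=[\mathcal{C}_g^{[m]}]$ as cycles, and $\Omega^\red$ on the Quot side equals $\Omega_{\mathcal{C}_g^{[m]}}$ in $K$-theory.

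Finally, granted the matching of reduced obstruction theory with the smooth structure, the virtual Riemann--Roch formula of \cite{FG} gives
\[
\chi_{-y}^\red\big(\Quot_X(\C^1\!,\beta,n)\big)=\int_{[\Quot]^\red}\mathcal{X}_{-y}\big(T^\red\big)
=\int_{[\mathcal{C}_g^{[m]}]}\mathcal{X}_{-y}\big(T_{\mathcal{C}_g^{[m]}}\big)=\chi_{-y}\big(\mathcal{C}_g^{[m]}\big),
\]
the last equality being ordinary Hirzebruch--Riemann--Roch on the smooth projective variety $\mathcal{C}_g^{[m]}$. I expect the scheme-level identification and the HRR step to be routine; the main obstacle is the obstruction-theoretic comparison — showing that the reduction procedure on the Quot scheme (removing the cosection induced by the symplectic form on $X$) corresponds \emph{exactly}, functorially in families over $|\beta|$, to the vanishing of the obstruction space $H^1(\OO_C(C))$ that makes $\mathcal{C}_g^{[m]}$ smooth. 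This requires care with the relative-to-$\PP$ version of the argument and with the fact that $Z$ may meet the singular locus of $C$; here I would lean on the irreducibility of $\beta$ to control the local structure of $C$ and on a relative deformation-theory diagram over $\PP$, following the $N=1$ Euler-characteristic argument of \cite{OP} and upgrading each identity of numbers to an identity in $K$-theory.
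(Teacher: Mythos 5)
Your starting identification is wrong, and the rest of the argument collapses with it. A point of $\Quot_X(\C^1\!,\beta,n)$ is a subsheaf $S\subset\OO_X$ whose double dual is $\OO_X(-D)$ for some $D\in|\beta|$, so $S=I_Z(-D)$ with $Z$ a length-$m$ zero-dimensional subscheme of $X$ that is \emph{not} required to lie on $D$; the quotient sits in $0\to\OO_Z(-D)\to Q\to\OO_D\to 0$. This is exactly Lemma \ref{lstructure}: the Quot scheme is $X^{[m]}\times\Hilb_X^\beta=X^{[m]}\times\PP^g$, of dimension $2m+g$, not the relative Hilbert scheme $\mathcal{C}_g^{[m]}$, which has dimension $m+g$ and is (roughly) the stable-pairs space $X_\beta^{[0,m]}$, i.e.\ the locus where $Z\subset D$. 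Hence there is no scheme isomorphism $\Quot_X(\C^1\!,\beta,n)\cong\mathcal{C}_g^{[m]}$, and your central step --- matching the reduced obstruction theory of the Quot scheme with the honest tangent complex of a smooth $\mathcal{C}_g^{[m]}$ under such an isomorphism --- cannot be carried out: the reduced theory only lowers the virtual dimension by one (to $m+g$), while the underlying scheme stays $2m+g$-dimensional, so the statement is irreducibly a virtual computation rather than a smooth one.

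The actual proof goes through an integral manipulation on $X^{[m]}\times\PP^g$. The reduced obstruction bundle is $\big((-D)^{[m]}\big)^\vee\otimes\LL$ with $\LL=\OO_{\PP^g}(1)$, so $\chi^{\red}_{-y}$ is the integral over $X^{[m]}\times\PP^g$ of $e\big(((-D)^{[m]})^\vee\otimes\LL\big)\,\XX_{-y}(T_{X^{[m]}})\XX_{-y}(T_{\PP^g})/\XX_{-y}\big(((-D)^{[m]})^\vee\otimes\LL\big)$. One then uses that $T_{X^{[m]}}$ is self-dual (holomorphic symplectic) together with an \cite{EGL}-type universality argument --- the relevant Chern numbers $(-D)^2$, $(-D).K_X$, $K_X^2$, $c_2(X)$ agree with those for $D$ because $K_X=0$ --- to replace $\big((-D)^{[m]}\big)^\vee$ by $D^{[m]}$ in the integrand. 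Finally, by \cite{KST} the tautological section of $D^{[m]}\otimes\LL$ cuts out $\mathcal{C}_g^{[m]}\subset X^{[m]}\times\PP^g$ scheme-theoretically, and by \cite{KY} this zero locus is smooth of expected dimension for $\beta$ irreducible; the standard formula $\chi_{-y}(Z)=\int_M e(E)\,\XX_{-y}(T_M)/\XX_{-y}(E)$ for a smooth zero locus of a regular section then yields $\chi_{-y}(\mathcal{C}_g^{[m]})$. Your proposal contains none of these three ingredients (the duality/universality swap of $(-D)$ for $D$, the tautological-section description of $\mathcal{C}_g^{[m]}$ inside the Quot scheme, and the smooth-zero-locus formula), so as written it does not prove the theorem.
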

The relative Hilbert schemes of points are smooth under the above assumption and their Hodge polynomials are computed in \cite{KY}. Specializing to the shifted $\chi_{-y}$-genus, one obtains
\begin{equation*}
\sum_{g\geq 0}\sum_{m\geq 0} \overline{\chi}_{-y}(\mathcal{C}_g^{[m]})t^{m+1-g}q^{g-1}
=\frac{y^{-1/2}-y^{1/2}}{\Delta(q)}\frac{\theta'(1)^3}{\theta(y^{1/2}/t)\theta(ty^{1/2})\theta(y)}.
\end{equation*}
See Subsection \ref{proof of K3} for the definition of the modular forms in the formula, and \cite{GS} for details. Here the shifted $\chi_{-y}$-genus is 
\[\overline{\chi}_{-y}(M):={\chi}_{-y}(M) \cdot y^{-\dim (M) /2}
\]
with the obvious analogue for the virtual $\chi_{-y}$-genus.
Define the generating series of the shifted reduced invariants as
\[\overline{Z}^{\red}_{X,1,\, \beta}(t)
:=\sum_{n\in \Z} \overline{\chi}_{-y}^\red\left(\Quot_X(\C^1\!, \beta,n)\right) t^n.\]
Combining Theorem \ref{tK3} with the Kawai-Yoshioka formula directly yields the following:
\begin{corollary}\label{cor}
Let $X$ be a $K3$ surface and $\beta$ be a primitive curve class which is big and nef, of arithmetic genus $g$ given by $2g-2=\beta^2$. Then $\overline{Z}^{\red}_{X,1,\, \beta}(t)$\ \!
is the coefficient of $q^g$ in the expression
\begin{multline*}
\frac{1}{(t+\frac{1}{t}-\sqrt{y}-\frac{1}{\sqrt{y}})}\times\\
\prod\limits_{n>0} \frac{1}{(1-q^n)^{18}(1-q^ny)(1-q^n/y)(1-q^n\sqrt{y}/t)(1-q^nt/\sqrt{y})(1-q^nt\sqrt{y})(1-q^n/t\sqrt{y})
}.
\end{multline*}
In particular, $Z^{\red}_{X,1,\, \beta}(t)$ is a rational function in the $t$ variable. 
\end{corollary}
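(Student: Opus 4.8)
\emph{Proof strategy.} The plan is to feed Theorem~\ref{tK3} into the Kawai--Yoshioka/G\"ottsche--Shende generating function for the relative Hilbert schemes, paying attention to the dimension shift built into $\overline{\chi}_{-y}$. First I would match dimensions: for $N=1$ one has $\vd\big(\Quot_X(\C^1,\beta,n)\big)=n+\beta^2=n+2g-2$, and since $X$ is a $K3$ surface the reduced obstruction theory raises this by one, so the reduced virtual dimension is $n+2g-1$. With $m=n+g-1$ this equals $m+g$, which is exactly $\dim\mathcal{C}_g^{[m]}$: since $\beta$ is big and nef, $h^0(\beta)=g+1$ by Riemann--Roch, so $|\beta|=\PP^g$, the universal curve $\mathcal{C}_g\to\PP^g$ has $(g+1)$-dimensional total space, and $\mathcal{C}_g^{[m]}$ has dimension $g+m$. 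Hence the two shifts agree, and Theorem~\ref{tK3} upgrades to
\[\overline{\chi}_{-y}^{\red}\big(\Quot_X(\C^1,\beta,n)\big)=\overline{\chi}_{-y}\big(\mathcal{C}_g^{[m]}\big),\qquad m=n+g-1.\]

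Next I would reindex the generating series. The invariant vanishes unless $m=n+g-1\geq0$, so
\[\overline{Z}^{\red}_{X,1,\,\beta}(t)=\sum_{m\geq0}\overline{\chi}_{-y}\big(\mathcal{C}_g^{[m]}\big)\,t^{\,m+1-g},\]
which is precisely the coefficient of $q^{g-1}$ in the master series
\[\sum_{g\geq0}\sum_{m\geq0}\overline{\chi}_{-y}\big(\mathcal{C}_g^{[m]}\big)\,t^{\,m+1-g}q^{\,g-1}
=\frac{y^{-1/2}-y^{1/2}}{\Delta(q)}\,\frac{\theta'(1)^3}{\theta(y^{1/2}/t)\,\theta(ty^{1/2})\,\theta(y)}.\]
It then remains to rewrite the right-hand side as the product in the statement, with a single factor of $q$ absorbed so that the coefficient of $q^{g-1}$ becomes the coefficient of $q^g$. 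I would use $\Delta(q)=q\prod_{n>0}(1-q^n)^{24}$ together with the product expansion of the theta function (which in the relevant normalization reads $\theta(z)=(z^{1/2}-z^{-1/2})\prod_{n>0}(1-q^nz)(1-q^n/z)$, whence $\theta'(1)=\prod_{n>0}(1-q^n)^2$). The net exponent of $\prod_{n>0}(1-q^n)$ is then $-24+6=-18$; the three theta arguments $y^{1/2}/t$, $ty^{1/2}$, $y$ contribute the six linear factors $(1-q^n\sqrt y/t)$, $(1-q^nt/\sqrt y)$, $(1-q^nt\sqrt y)$, $(1-q^n/(t\sqrt y))$, $(1-q^ny)$, $(1-q^n/y)$; and a short computation with the leading terms $z^{1/2}-z^{-1/2}$ (using $\tfrac{y^{-1/2}-y^{1/2}}{y^{1/2}-y^{-1/2}}=-1$ and $\big((\sqrt y/t)^{1/2}-(\sqrt y/t)^{-1/2}\big)\big((t\sqrt y)^{1/2}-(t\sqrt y)^{-1/2}\big)=-\big(t+\tfrac1t-\sqrt y-\tfrac1{\sqrt y}\big)$) identifies the remaining scalar as $\big(t+\tfrac1t-\sqrt y-\tfrac1{\sqrt y}\big)^{-1}$. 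The one surviving $q^{-1}$ from $\Delta(q)^{-1}$ is exactly the discrepancy between $q^{g-1}$ and $q^g$, giving the displayed formula.

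Finally, for the rationality assertion I would note that in the infinite product each factor $\big(1-q^n\cdot(\text{monomial in }t^{\pm1},y^{\pm1/2})\big)^{-1}$ contributes only finitely many terms at each order in $q$, so the coefficient of $q^g$ in the product is a Laurent polynomial in $t$ with coefficients in $\Z[y^{\pm1/2}]$; multiplying by $\big(t+\tfrac1t-\sqrt y-\tfrac1{\sqrt y}\big)^{-1}=t\big(t^2-(\sqrt y+\tfrac1{\sqrt y})t+1\big)^{-1}$, which is rational in $t$, shows $\overline{Z}^{\red}_{X,1,\,\beta}(t)$ is rational in $t$. Since $\overline{\chi}_{-y}^{\red}(\cdot)=\chi_{-y}^{\red}(\cdot)\cdot y^{-(n+2g-1)/2}$ yields $Z^{\red}_{X,1,\,\beta}(t)=y^{(2g-1)/2}\,\overline{Z}^{\red}_{X,1,\,\beta}(t\sqrt y)$, rationality in $t$ descends to $Z^{\red}_{X,1,\,\beta}(t)$.

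I do not expect a serious obstacle --- the statement is genuinely a corollary of Theorem~\ref{tK3} and the Kawai--Yoshioka computation --- but the two delicate points are the dimension/shift bookkeeping in the first step (so that passing from $\chi_{-y}^{\red}$ to $\overline{\chi}_{-y}^{\red}$ is clean) and the formal but sign- and exponent-sensitive rearrangement of the theta quotient into the stated product with the correct power of $q$; this is where an error would most easily creep in. One should also recall that invoking Kawai--Yoshioka uses the full hypothesis that $\beta$ is primitive as well as big and nef, which is what makes $\mathcal{C}_g^{[m]}$ smooth with Hodge numbers depending only on $g$.
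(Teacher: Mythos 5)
Your proposal follows essentially the same route as the paper: apply Theorem \ref{tK3} to rewrite $\overline{Z}^{\red}_{X,1,\beta}(t)=\sum_{m\geq 0}\overline{\chi}_{-y}(\mathcal{C}_g^{[m]})\,t^{m+1-g}$ (after the same check that the reduced virtual dimension equals $m+g=\dim\mathcal{C}_g^{[m]}$), then substitute the Kawai--Yoshioka/G\"ottsche--Shende theta-quotient formula, expand $\Delta(q)$ and the theta functions into products, and read off the coefficient of $q^g$ after absorbing the single $q^{-1}$ from $\Delta(q)^{-1}$; your exponent count $-24+6=-18$, the sign bookkeeping yielding $\big(t+\tfrac1t-\sqrt y-\tfrac1{\sqrt y}\big)^{-1}$, and the rationality argument are all correct and agree with the paper.

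The one step you gloss over is the mismatch of hypotheses: Theorem \ref{tK3} (and the smoothness of $\mathcal{C}_g^{[m]}$ via Kawai--Yoshioka) is stated for an \emph{irreducible} big and nef class, whereas the corollary assumes $\beta$ only \emph{primitive}, big and nef. The paper bridges this by invoking deformation invariance of the reduced invariants to deform $(X,\beta)$ so that $\beta$ becomes irreducible before applying Theorem \ref{tK3}; your closing remark attributes the smoothness of $\mathcal{C}_g^{[m]}$ to primitivity alone, which is not the right justification --- irreducibility is what is used, and the primitive case is reduced to it by deformation. Adding that one sentence closes the gap; otherwise the argument is complete.
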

Formulas for the reduced invariants for higher $N$ are not available yet. Note however that complete expressions for the higher rank stable pair invariants on $K3$ surfaces with primitive curve classes were obtained in \cite{BJ}. We leave the study of the higher $N$ case and the comparison to its stable pair analogue for future work. 

Finally, note that the case $\beta=0$ on a $K3$ surface is not covered by Corollary \ref{cor}. Nevertheless, we can compute the reduced invariants for dimension 0 quotients by Theorem \ref{tK3} via deformation to an elliptic $K3$ surface.  
\begin{theorem} \label{tred}
Let X be a $K3$ surface. The generating series of reduced $\chi_{-y}$-genera of $X^{[n]}$ takes the form
\[\sum_{n=1}^\infty t^n\, \chi_{-y}^\red (X^{[n]})=\frac{t\,(2+20y+2y^2)}{(1-yt)(1-t)}.
\]
\end{theorem}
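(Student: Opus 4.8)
The plan is to use deformation invariance to reduce to an elliptic $K3$ surface, to identify the reduced virtual class of $X^{[n]}$ with a relative Hilbert scheme by the mechanism behind Theorem \ref{tK3}, and then to evaluate the resulting $\chi_{-y}$-genus by a motivic (scissor) computation. First I would note that $\chi^\red_{-y}(X^{[n]})$ is a deformation invariant of $X$: the reduced perfect obstruction theory on $X^{[n]}=\Quot_X(\C^1,0,n)$ is cut out inside the standard one by splitting off the trivial quotient determined by the nowhere vanishing cosection attached to a holomorphic symplectic form, and this construction varies in families of $K3$ surfaces. Hence it suffices to treat one surface in the deformation class, and I would take $X=X_{\mathrm{ell}}$ to be a generic elliptic $K3$ surface $\pi\colon X\to\PP^1$ with a section $B$ and $24$ nodal fibres.

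On such an $X$ the linear system $|F|$ of the fibre class is a pencil $\cong\PP^1$ whose universal curve is $X\to\PP^1$ itself. The key claim, to be proved by running the cosection-localisation argument of \cite{OP} (recast for $\chi_{-y}$-genera exactly as in the proof of Theorem \ref{tK3}) with this pencil in place of $|\beta|$, is that
\[
\chi^\red_{-y}(X^{[n]})=\chi_{-y}\big(\Hilb^n(X/\PP^1)\big),
\]
the $\chi_{-y}$-genus of the relative Hilbert scheme of $n$ points on $\pi$. Here $X^{[n]}$ is smooth of dimension $2n$ with reduced virtual dimension $n+1$; the locus $\Hilb^n(X/\PP^1)\subset X^{[n]}$ of subschemes supported on a single fibre is the preimage of the degree-$n$ rational normal curve under $X^{[n]}\to\mathrm{Sym}^n\PP^1=\PP^n$, $Z\mapsto\pi_*[Z]$, it has the expected codimension $n-1$, and it is smooth of dimension $n+1$ even though its fibres over the $24$ nodal fibres of $\pi$ are singular — just as the relative Hilbert schemes in Theorem \ref{tK3} are smooth in the Kawai--Yoshioka set-up. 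The localisation then identifies $[X^{[n]}]^\red$ with $[\Hilb^n(X/\PP^1)]$ and the reduced virtual tangent complex with the honest tangent bundle, yielding the displayed equality.

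The remaining evaluation is elementary because $\chi_{-y}$ is motivic. I would decompose $\PP^1$ into the open locus $U$ of smooth fibres and the $24$ nodal points. Over $U$ the relative Hilbert scheme is a Zariski-locally trivial $\PP^{n-1}$-bundle over $\Pic^n(X_U/U)$, which is isomorphic to $X_U$ via the section $B$, so this part contributes $\chi_{-y}(X_U)(1+y+\cdots+y^{n-1})$; over each nodal point the fibre is the Hilbert scheme of $n$ points on a nodal cubic $C_0$, which is motivically a $\PP^{n-1}$-bundle over its compactified Jacobian $\cong C_0$, contributing $\chi_{-y}(C_0)(1+y+\cdots+y^{n-1})$. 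Now $\chi_{-y}(X)=2+20y+2y^2$, while a scissor computation ($C_0$ is $\PP^1$ with two points glued, so $C_0=\mathbb{G}_m\sqcup\{\mathrm{pt}\}$ as a variety) gives $\chi_{-y}(C_0)=y$ and hence $\chi_{-y}(X_U)=(2+20y+2y^2)-24y=2-4y+2y^2$. Therefore
\[
\chi_{-y}\big(\Hilb^n(X/\PP^1)\big)=\big((2-4y+2y^2)+24y\big)(1+y+\cdots+y^{n-1})=(2+20y+2y^2)(1+y+\cdots+y^{n-1}),
\]
and summing $\sum_{n\geq 1}t^n(1+y+\cdots+y^{n-1})=\tfrac{t}{(1-t)(1-yt)}$ produces the claimed rational function. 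For $n=1$ one has $\Quot_X(\C^1,0,1)=X$ with trivial reduced obstruction theory, so $\chi^\red_{-y}(X^{[1]})=\chi_{-y}(X)=2+20y+2y^2$ agrees with the coefficient of $t$, a useful consistency check.

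The main obstacle is the middle step, namely the identification $\chi^\red_{-y}(X^{[n]})=\chi_{-y}(\Hilb^n(X/\PP^1))$. One must carry out the deformation-to-the-normal-cone and cosection analysis of Theorem \ref{tK3} for the pencil $|F|$, whose members are nef but not big, so Theorem \ref{tK3} does not literally apply; a natural way to do this is to degenerate from the big and nef classes $B+kF$ and control the limit of the relative Hilbert schemes. In addition one must verify that the reduced virtual structure restricts to the honest tangent bundle across the $24$ nodal fibres, where the fibrewise Hilbert schemes are singular while the total relative Hilbert scheme remains smooth. Once this geometric input is secured, the rest is the bookkeeping above.
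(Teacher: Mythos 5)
Your overall strategy (deform to an elliptic $K3$, relate the reduced theory of $X^{[n]}$ to the relative Hilbert scheme of the elliptic pencil, then evaluate) is the same as the paper's, and your final evaluation step is a legitimate alternative: instead of extracting the $q^1$-coefficient of the Kawai--Yoshioka product as the paper does, you stratify $\Hilb^n(X/\PP^1)$ by smooth and nodal fibres and use additivity/multiplicativity of the motivic $\chi_{-y}$, arriving correctly at $(2+20y+2y^2)(1+y+\cdots+y^{n-1})$. But the heart of the theorem is the identity $\chi_{-y}^\red(X^{[n]})=\chi_{-y}\big(\Hilb^n(X/\PP^1)\big)$, and this you assert rather than prove; you acknowledge as much when you call it ``the main obstacle.'' Moreover, your sketch of how to obtain it does not address the actual difficulty. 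The relative Hilbert scheme arises from the reduced theory of the Quot scheme $\Quot_X(\C^1\!,f,n)=X^{[n]}\times\PP^1$ with curve class $f$, via the tautological section of $(f)^{[n]}\otimes\LL$ as in Theorem \ref{tK3}; it does not arise from any structure intrinsic to $\Quot_X(\C^1\!,0,n)=X^{[n]}$, which carries no pencil, no universal curve, and whose reduced obstruction bundle $(\OO^{[n]}-\OO)^\vee$ has no natural section cutting out the fibre-supported locus. So the missing input is precisely a comparison between the $\beta=0$ and $\beta=f$ reduced theories, namely $\chi_{-y}^\red(X^{[n]})=\chi_{-y}^\red\big(\Quot_X(\C^1\!,f,n)\big)$ for $n\geq 1$.

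The paper proves exactly this comparison, and by a mechanism absent from your proposal: both sides are written as tautological integrals over $X^{[n]}$ (after integrating out the $\PP^1$ factor on the Quot side), and by the \cite{EGL} universality the Quot-side integral depends only on the Chern numbers of $\OO(-f)$, which on a $K3$ coincide with those of $\OO$ since $f^2=0$ and $K_X=0$; one may therefore replace the Chern roots of $\OO(-f)^{[n]}$ by those of $\OO^{[n]}$, and the presence of the zero Chern root of $\OO^{[n]}$ collapses the resulting expression to the reduced integrand for $X^{[n]}$. Two further points in your sketch are off target. First, the non-bigness of $f$ is not the obstacle you make it out to be: bigness in Theorem \ref{tK3} is used only to guarantee $h^{>0}(\OO_X(D))=0$, which holds for the fibre class, so that theorem's proof applies verbatim to $\beta=f$ and no degeneration from $B+kF$ is needed (nor is it clear what ``controlling the limit of relative Hilbert schemes'' along a change of curve class would mean, since this is not a deformation of the surface and the reduced invariants are not obviously constant in such a process). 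Second, your stronger cycle-level claim that ``localisation identifies $[X^{[n]}]^\red$ with $[\Hilb^n(X/\PP^1)]$'' inside $X^{[n]}$ is neither proved nor needed; the paper only ever compares integrals. As it stands, the proposal establishes the bookkeeping but not the geometric identity on which the theorem rests.
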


\subsection{Plan of the paper}
\noindent We start by studying the Quot scheme and its virtual fundamental class for $N=1$ in Subsection \ref{suniversality}. We then apply virtual localization and the recursive argument of \cite{EGL} to prove the universality of the Quot scheme invariants in Theorem \ref{tuniv}. For curve classes of Seiberg-Witten length $N$, multiplicativity of the answer is shown. In Section \ref{sapp}, the universal series in the multiplicative formula of Theorem \ref{tmult} are studied by using special geometries. We use $K3$ surfaces to complete the proof of Theorem \ref{tmult}. Next we prove Theorem \ref{t0} by using blow ups of $K3$ surfaces. Elliptic surfaces are studied next and Theorem \ref{tell} is established. Theorem \ref{tg} and Theorem \ref{tbl} are proven in Subsection \ref{sgl} and \ref{sbl}, respectively. We then use Theorems \ref{t0}, \ref{tell}, \ref{tg}, \ref{tbl} to prove Theorem \ref{trat} in Subsection \ref{srat}. In Section \ref{sK3}, we study the reduced invariants of $K3$ surfaces and prove Theorem \ref{tK3} and Theorem \ref{tred}.

\subsection{acknowledgements}
The author would like to express special thanks to Dragos Oprea for suggesting this topic and numerous discussions. The author also thank the anonymous referees for various suggestions which improved the manuscript. This work is based on the research which will be part of the author's thesis at UCSD.

\section{The structure of the Quot scheme invariants}\label{suniversality}
\subsection{The virtual fundamental class of the Hilbert scheme} \label{sN=1}
\noindent We begin by studying the virtual fundamental classes of Quot schemes with $N=1$ using results of \cite{GSY,GT}. This subsection will be applied to the case of general $N$ in Subsection \ref{suniv}. 

Let $X$ be a smooth projective surface and $\beta$ be a curve class. When $N=1$, the Quot scheme $\Quot_X(\C^1\!, \beta,n)$ is simply the Hilbert scheme of 1-dimensional subschemes on $X$. The lemma below allows us to separate points and divisors. The proof of the lemma was given in \cite{F}, but we review some of the details for later use. 

\begin{lemma}\label{lstructure}
Let $X$ be a smooth projective surface and $\beta$ be a curve class. Then 
\[\Quot_X(\C^1\!, \beta,n)\simeq X^{[m]}\times \textnormal{Hilb} ^\beta_X\, ,
\]
where 
\[m=n+\frac{\beta(\beta+K_X)}{2}.\]
\end{lemma}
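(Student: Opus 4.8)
The plan is to produce the isomorphism by sending a one‑dimensional subscheme $Z\subseteq X$ to the pair consisting of its purely one‑dimensional part (a divisor $D$ of class $\beta$) together with the residual zero‑dimensional information. Concretely, given $Z$ with ideal sheaf $I_Z$, let $D$ be the divisorial (pure codimension‑one) part of $Z$; then $\OO_X(-D)\subseteq I_Z$ and the quotient $I_Z/\OO_X(-D)$, twisted back to $\OO_X(D)$, is the ideal sheaf $I_W\subseteq\OO_X$ of a zero‑dimensional subscheme $W\subseteq X$. This assignment $Z\mapsto (W,D)$ should be the underlying map on points of the claimed isomorphism $\Quot_X(\C^1,\beta,n)\xrightarrow{\ \sim\ } X^{[m]}\times\Hilb_X^\beta$, and I would construct its inverse by $(W,D)\mapsto Z$, where $I_Z:=I_W\otimes\OO_X(-D)$, noting that $I_W(-D)\hookrightarrow\OO_X$ still has torsion‑free, rank‑one cokernel supported in dimension $\le 1$, hence cuts out a genuine subscheme of the required topological type.

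The key steps, in order: (1) Check the numerics. For $Z=W\cup D$ we have a short exact sequence $0\to \OO_X(-D)\to I_Z\to i_*I_W(D)\to 0$ (after identifying the middle cokernel), giving $\chi(\OO_Z)=\chi(\OO_D)+\chi(\OO_W)$. Since $c_1(Q)=\beta$ forces the divisorial part to have class $\beta$, and $\chi(\OO_D)=-\tfrac{\beta(\beta+K_X)}{2}$ by Riemann–Roch on the surface while $\chi(\OO_W)=\mathrm{len}(W)=m$, the condition $\chi(\OO_Z)=n$ translates exactly into $m=n+\tfrac{\beta(\beta+K_X)}{2}$, matching the statement. (2) Upgrade the set‑theoretic bijection to a morphism of schemes by working in families: over $\Quot_X(\C^1,\beta,n)$ there is a universal quotient with universal subscheme $\mathcal Z$; its relative divisorial part defines a flat family of divisors, hence a map to $\Hilb_X^\beta$ (the universal divisor $\mathcal D$ of Subsection \ref{sSW}), and the residual family of zero‑dimensional subschemes defines a map to $X^{[m]}$ — this uses flatness of $\mathcal Z$ and of $\OO_X(-\mathcal D)$ to guarantee the quotient is flat of the right Hilbert polynomial. (3) Build the inverse morphism from the universal objects of $X^{[m]}\times\Hilb_X^\beta$ by forming $\mathcal I_{\mathcal W}\boxtimes\OO_X(-\mathcal D)$, and (4) verify the two compositions are the respective identities, which can be done on the level of universal families or, since both sides are separated of finite type over $\C$, on $\C$‑points plus a tangent‑space / flatness check.

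The main obstacle is step (2)–(3): making the decomposition $Z\mapsto(W,D)$ work \emph{in families} rather than just on points. The subtlety is that the divisorial part of a flat family of one‑dimensional subschemes need not a priori vary flatly without an argument — one must show that, because the fibers all have fixed class $\beta$ for the pure part and fixed length $m$ for the embedded part, the relative $\mathrm{Ext}$/torsion filtration behaves well; equivalently that $\underline{\mathrm{Hom}}$ of the universal ideal sheaf into $\OO$ produces a line subbundle cutting out a flat Cartier divisor. This is precisely the content of the argument in \cite{F}; I would either invoke it directly or reprove it by the standard device of noting that $\Hilb_X^\beta$ parametrizes effective Cartier divisors (so the Abel–Jacobi picture of Subsection \ref{sSW} applies) and that tensoring the flat family $\OO_X(-\mathcal D)$ with the flat family $\mathcal I_{\mathcal W}$ stays flat, then checking the resulting map into $\Quot$ is inverse to the decomposition map on a dense open and extending by separatedness. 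Everything else — the Riemann–Roch bookkeeping and the pointwise bijection — is routine.
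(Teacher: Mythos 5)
Your strategy coincides with the paper's: split the one--dimensional subscheme into its divisorial part $D$ and a residual zero--dimensional scheme $W$ via $I_Z=I_W(-D)$, do the Riemann--Roch bookkeeping, and defer the family statement to \cite{F}. However, the forward construction is stated with the key containment reversed, and as written it does not define a map. Since $D\subseteq Z$, one has $I_Z\subseteq \OO_X(-D)$, not $\OO_X(-D)\subseteq I_Z$ (the latter would force $Z\subseteq D$, i.e.\ $W=\emptyset$, so it holds only when $m=0$). Consequently the object ``$I_Z/\OO_X(-D)$'' does not exist, and even the correct quotient $\OO_X(-D)/I_Z$ is $\OO_W(-D)$, the \emph{structure} sheaf of $W$ (twisted), not its ideal sheaf; the residual ideal is obtained by twisting $I_Z$ itself, $I_W:=I_Z(D)\subseteq\OO_X$. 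Likewise your displayed sequence $0\to\OO_X(-D)\to I_Z\to i_*I_W(D)\to 0$ is not correct; the sequence you want is the filtration $0\to\OO_W(-D)\to\OO_Z\to\OO_D\to 0$, which does give $n=\chi(\OO_Z)=\chi(\OO_D)+\mathrm{length}(W)=-\tfrac{\beta(\beta+K_X)}{2}+m$, i.e.\ the stated relation. A cleaner (and family--friendly) way to extract $D$, and the one the paper uses, is via the double dual: the kernel $S\subseteq\OO_X$ of a quotient is rank one torsion free, $S^{\vee\vee}$ is reflexive hence a line bundle $\OO_X(-D)$, the cokernel of $S\hookrightarrow S^{\vee\vee}$ is zero--dimensional, and then $S(D)=I_W$ by definition. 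This replaces the informal appeal to ``the divisorial part'' and fixes the direction of all inclusions at once.

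Apart from this, your proposal matches the paper: the inverse map $I_Z:=I_W(-D)$ given by composing $I_W(-D)\hookrightarrow\OO_X(-D)\hookrightarrow\OO_X$ is exactly the paper's forward direction, the numerical identity is the same Riemann--Roch computation, and the paper also only describes the correspondence on closed points and then says the same argument works in families, citing \cite{F} for the lemma, so your reliance on \cite{F} for flatness of the divisorial/residual decomposition in families is consistent. One more small slip: the cokernel of $I_W(-D)\hookrightarrow\OO_X$ is a torsion sheaf of dimension at most one (it is $\OO_Z$), not ``torsion--free, rank--one''; it is the kernel that is torsion free of rank one.
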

\begin{proof}
We describe the isomorphism of the lemma on the level of closed points. Take $(Z,D)\in X^{[m]}\times \Hilb^\beta_X$. Define $S=I_Z(-D)$ as the ideal sheaf of the 1-dimensional subscheme given by a composition of two injections
\[S=I_Z(-D)\hookrightarrow\OO_X(-D)\hookrightarrow \OO_X.
\]
Then the corresponding quotient
\[0\rightarrow S\rightarrow \OO_X\rightarrow Q\rightarrow 0
\]
satisfies $c_1(Q)=\beta$ and $\chi(Q)=m-\frac{\beta(\beta+K_X)}{2}$. This provides the one direction of the correspondence. 

For the other direction, take 
\begin{equation}\label{11}
0 \rightarrow S\rightarrow \OO_X\rightarrow Q\rightarrow 0
\end{equation}
with $c_1(Q)=\beta, \ \chi(Q)=n$. By taking double duals, we obtain the following diagram:\\
\begin{center}
\begin{tikzcd}%
&&&0\arrow[d]&\\
& 0 \arrow[r] \arrow[d] & 0 \arrow[r]  \arrow[d] & Q' \arrow[dash, dotted, r] \arrow[d]  &  \ \\ 
0 \arrow[r] & S \arrow[r] \arrow[d] & \OO_X \arrow[r]  \arrow[d] & Q \arrow[r] \arrow[d]  & 0 \\  
0 \arrow[r] & S^{\vee \vee} \arrow[r] \arrow[d] & \OO_X^{\vee \vee} \arrow[r]  \arrow[d] & Q'' \arrow[r] \arrow[d]  & 0 \\  
\ \arrow[dotted, r] & Q' \arrow[r] \arrow[d]  & 0 \arrow[r]  & 0 \rlap{ .} & \\
&0&&&
\end{tikzcd}%
\end{center}
A torsion free sheaf on a smooth surface injects into its double dual with a zero dimensional cokernel.  Therefore $Q'$ is a zero dimensional sheaf. Also, $S^{\vee \vee}$ is a reflexive sheaf on a smooth surface hence locally free. Therefore we can regard
\[0 \rightarrow S^{\vee \vee} \rightarrow \OO_X^{\vee \vee} \rightarrow Q''\rightarrow0\]
as the ideal exact sequence of an effective divisor $D$ and identify $S^{\vee \vee}=\mathcal{O}(-D)$. 
Twisting the first column by $\OO_X(D)$, we also obtain
\[0 \rightarrow S(D)\rightarrow \OO_X\rightarrow Q'(D)\rightarrow0\]
which can be thought of as the ideal exact sequence of a zero dimensional subscheme $Z$ and identify $Q'(D)=\mathcal{O}_Z$. Therefore, the last column is identified with the exact sequence
\begin{align*}
0\rightarrow \OO_Z(-D)\rightarrow Q\rightarrow \OO_D\rightarrow0.
\end{align*}
Taking holomorphic Euler characteristics, we obtain
\begin{align*}
m&:=\text{length}(Z)=\chi(\OO_Z(-D))=\chi(Q)-\chi(\OO_D)\\&=n+\frac{\beta(\beta+K_X)}{2}
\end{align*}
by the Riemann-Roch theorem. The assignment from (\ref{11}) to $(Z, D)$ provides the other direction of the correspondence. 

It is clear that these constructions are inverse to each other. 
The same argument in families completes the proof.
\end{proof}
We next study the deformation theory of the Quot scheme above. In general, a natural 2-term perfect obstruction theory for the nested Hilbert scheme was constructed in \cite{GSY}. As a particular case, consider the nested Hilbert scheme $X_\beta^{[m_1, m_2]}$ parametrizing zero dimensional subschemes $(Z_1, Z_2)$ and a divisor $D$ such that
\[ \textnormal{length}(Z_i)=m_i, \quad c_1(\OO(D))=\beta,\quad  I_{Z_1}(-D)\subseteq I_{Z_2}.
\]
The construction can be easily understood in three special cases. Specifically, the following identifications have been proved in \cite[Proposition 3.1]{GSY}:
\[[X^{[m,m]}_{\beta=0}]^\vir=[X^{[m]}], \quad 
[X^{[0,0]}_\beta]^\vir=[\Hilb_X^\beta]^\vir, \quad
[X^{[0,m]}_\beta]^\vir=[P_n(X,\beta)]^\vir.
\]
Here $P_n(X,\beta)$ is the moduli space of stable pairs. By Lemma 1, we also have the identification 
\[\Quot_X(\C^1\!, \beta,n)=X_{\beta}^{[m, 0]}.\]
The Proposition below matches the virtual fundamental classes.
\begin{proposition}
Through the identification $\Quot_X(\C^1\!, \beta,n)=X^{[m,0]}_\beta$, the virtual fundamental class of the Quot scheme on the left and the virtual fundamental class of the nested Hilbert scheme on the right agree:
\[[\Quot_X(\C^1\!, \beta,n)]^\vir=[X^{[m,0]}_\beta]^\vir.
\]
\end{proposition}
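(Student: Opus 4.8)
The plan is to compare the two perfect obstruction theories directly, rather than just the cycle classes. Both $\Quot_X(\C^1,\beta,n)$ and the nested Hilbert scheme $X^{[m,0]}_\beta$ carry $2$-term perfect obstruction theories: the former from the deformation theory of quotients reviewed in Subsection \ref{sdef}, the latter from the construction of \cite{GSY}. Since a virtual fundamental class depends only on the (quasi-isomorphism class of the) obstruction theory, it suffices to exhibit an identification of the obstruction theories compatible with the isomorphism of schemes from Lemma \ref{lstructure}. So the first step is to write down, on the universal family, the map $I_{Z}(-D)\hookrightarrow \OO_X$ encoding a point of the Quot scheme and identify $S=I_Z(-D)$ with the subsheaf appearing in the nested Hilbert scheme $X^{[m,0]}_\beta$ (the case $m_2=0$ of the general nested Hilbert scheme, where the second subscheme is empty so the nesting condition $I_{Z}(-D)\subseteq I_{\emptyset}=\OO_X$ is automatic).

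Next I would recall the shape of the obstruction theory on $X^{[m,0]}_\beta$ from \cite{GSY}: it is built from the complex governing deformations of the pair (divisor $D$, quotient of $\OO(-D)$ with $0$-dimensional cokernel), which is precisely $R\Hom_{X}(I_{Z}(-D), \OO_X)$ after reduction by the trace/determinant part controlling the $\OO_X$ factor — in other words, the truncation of $R\Hom(I_{Z}(-D),\OO_X)$ that is exactly the Quot-scheme obstruction theory $R\Hom(S,Q)$ via the exact triangle $S\to\OO_X\to Q$. Concretely, from $0\to S\to\OO_X\to Q\to 0$ one gets $R\Hom(S,Q)\simeq \mathrm{Cone}\big(R\Hom(\OO_X,Q)\to R\Hom(S,Q)\big)[-1]$ and, more usefully, the Quot obstruction complex $E^\bullet_{\Quot}$ sits in a triangle relating $R\Hom(S,S)$, $R\Hom(S,\OO_X)$; the same linear algebra appears in \cite{GSY} for the nested case with $m_2=0$. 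So the key step is to match these two triangles term by term and check that the induced maps to $\mathbb{L}$ agree. The higher obstruction vanishing $\Ext^2(S,Q)=\Hom(Q,S\otimes K_X)^\vee=0$ noted in Subsection \ref{sdef} ensures both theories are genuinely $2$-term, so no truncation subtleties intervene.

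The main obstacle will be bookkeeping rather than conceptual: I must check that the splitting of $S=I_Z(-D)$ into the "$Z$-part" $X^{[m]}$ and the "$D$-part" $\Hilb^\beta_X$ that underlies Lemma \ref{lstructure} is compatible, at the level of obstruction \emph{complexes} and not merely tangent/obstruction spaces, with the corresponding decomposition of the \cite{GSY} obstruction theory — and in particular that the comparison map is a quasi-isomorphism of the chosen representatives, not just an isomorphism on cohomology sheaves. I would handle this by working on the universal family over $X^{[m]}\times\Hilb^\beta_X\times X$, expressing both obstruction theories as explicit two-term complexes of locally free sheaves built from $R\pi_*$ of $\mathcal{H}om$-sheaves of the universal ideal sheaf and universal divisor, and invoking the base-case identifications already recorded from \cite[Proposition 3.1]{GSY}, namely $[X^{[m,m]}_{\beta=0}]^\vir=[X^{[m]}]$ and $[X^{[0,0]}_\beta]^\vir=[\Hilb^\beta_X]^\vir$, to pin down the two factors; the general $m_2=0$ case then follows by the same functorial construction. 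Once the obstruction theories are identified, equality of virtual classes is immediate.
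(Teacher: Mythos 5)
Your overall strategy---identify the two perfect obstruction theories, including their morphisms to the truncated cotangent complex, and then conclude equality of virtual classes---is valid in principle, but the decisive step is exactly the one you defer to ``bookkeeping'', and nothing in your sketch supplies it. The Quot-scheme theory is governed by $R\HHom_\pi(I_\ZZ(-\DD),\QQ)$, while the theory of \cite{GSY} on $X^{[m,0]}_\beta$ has virtual tangent bundle $R\HHom_\pi(I_\ZZ,\OO(\DD))-\big[R\HHom_\pi(I_\ZZ,I_\ZZ)+R\HHom_\pi(\OO,\OO)\big]_0$; these are not literally the same complexes, and matching them as obstruction theories would require a quasi-isomorphism of the representatives that commutes with the two (differently constructed) maps to $\tau_{\geq-1}\mathbb{L}$. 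Comparing those maps is the hard content, not routine bookkeeping, and the auxiliary facts you invoke --- the identifications $[X^{[m,m]}_{\beta=0}]^\vir=[X^{[m]}]$ and $[X^{[0,0]}_\beta]^\vir=[\Hilb_X^\beta]^\vir$ from \cite{GSY} --- are statements about cycle classes in special cases; they cannot pin down obstruction-theory morphisms in the mixed case $X^{[m,0]}_\beta$, so the ``term-by-term matching of triangles compatible with $\mathbb{L}$'' remains unproved.

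The paper sidesteps this difficulty entirely: by \cite[Theorem 4.6]{S}, for a projective scheme with a $2$-term perfect obstruction theory the virtual fundamental class depends only on the underlying scheme and the class of the virtual tangent bundle in $K^0$. Hence it suffices to compare the two virtual tangent bundles in K-theory, which is a short manipulation with the universal sequence $0\to I_\ZZ(-\DD)\to\OO\to\QQ\to0$: both sides reduce to $-R\HHom_\pi(I_\ZZ,I_\ZZ)+R\HHom_\pi(I_\ZZ,\OO(\DD))$. To repair your argument you must either genuinely establish the derived-level identification of the obstruction theories (including compatibility of the maps to the cotangent complex), or insert a result of Siebert's type that renders such an identification unnecessary; as written, the proposal has a gap at precisely this point.
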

\begin{proof}
For a projective scheme with 2-term perfect obstruction theory, the virtual fundamental class depends only on the virtual tangent bundle in the $K$-theory and the underlying scheme structure by \cite[Theorem 4.6]{S}. Therefore it suffices to compare the virtual tangent bundle of $X^{[m]}\times \Hilb_X^\beta$ as a Quot scheme and as a nested Hilbert scheme. By Lemma \ref{lstructure}, the universal quotient over 
\[ \Quot_X(\C^1\!, \beta,n)\times X=
\left(X^{[m]}\times \Hilb_X^\beta\right)\times X
\]
is given by 
\[ 0\rightarrow I_\ZZ(-\DD)\rightarrow\OO_{X^{[m]}\times \Hilb_X^\beta\times X}\rightarrow \mathcal{Q}\rightarrow 0,
\]
where $\ZZ$ and $\DD$ denote the universal subscheme and the universal divisor.
Using this, we compute the virtual tangent bundle of $X^{[m]}\times \Hilb_X^\beta$ as a Quot scheme
\begin{align*}
T^\vir_{\Quot}&=R\HHom_{\pi}(I_\ZZ(-\DD), \mathcal{Q})\\
&=R\HHom_{\pi}(I_\ZZ(-\DD),\OO-I_\ZZ(-\DD))\\
&=-R\HHom_{\pi}(I_\ZZ,I_\ZZ) +R\HHom_{\pi}(I_\ZZ,\OO(\DD))\\
&=R\HHom_\pi(I_\ZZ,I_\ZZ)_0[1]-R\HHom_\pi(\OO,\OO)+R\HHom_\pi(\OO-\OO_\ZZ, \OO+\OO_\DD(\DD))\\
&=T_{X^{[m]}}+R\pi_*(\OO_\DD(\DD))-R\HHom_\pi(\OO_\ZZ,\OO(\DD))\\
&=T_{X^{[m]}}+T^\vir_{\Hilb_X^\beta}-R\HHom_\pi(\OO_\ZZ,\OO(\DD))
\end{align*}
where the subscript $0$ on the fourth line denotes the trace free part. 
We have suppressed various pull backs and denoted the projection map from $X$ to a point by $\pi$. 

By \cite[Proposition 2.2]{GSY}, the virtual tangent bundle of $X^{[m]}\times \Hilb_X^\beta$ as a nested Hilbert scheme is
\[T^\vir_{X^{[m,0]}_\beta}=
R\HHom_{\pi}(I_\ZZ,\OO(\DD))-\Big[R\HHom_{\pi}(I_\ZZ,I_\ZZ)+R\HHom_{\pi}(\OO,\OO)\Big]_0
\]
where the bracket with subscript $0$ denotes again the trace free part. This removes one copy of $R\HHom_\pi(\OO,\OO)$ in $K$-theory, hence
\[T^\vir_{X^{[m,0]}_\beta}=-R\HHom_{\pi}(I_\ZZ,I_\ZZ) +R\HHom_{\pi}(I_\ZZ,\OO(\DD)).\]
This agrees with the third line of the computation of $T^\vir_\Quot$. This competes the proof.
\end{proof}
In the above proof, we established that 
\begin{equation}\label{vtan}
T^\vir_{X^{[m]}\times \Hilb_X^\beta}=T_{X^{[m]}}+T^\vir_{\Hilb_X^\beta}-R\HHom_\pi(\OO_\ZZ,\OO(\DD)).
\end{equation}
Note that 
\[R\HHom_\pi(\OO_\ZZ,\OO(\DD))=R\HHom_\pi(\OO(\DD),\OO_\ZZ(K_X))^\vee=\Big(\pi_*\OO_\ZZ(K_X-\DD)\Big)^\vee
\]
is an actual, not virtual,  vector bundle of rank $m$ by Serre duality and the vanishing of the higher pushforwards. If $\Hilb_X^\beta$ were smooth, then this would easily imply 
\[ [X^{[m]}\times \Hilb_X^\beta]^\vir=e\big(R\HHom_\pi(\OO_\ZZ,\OO(\DD))\big) \cap
\Big([X^{[m]}]\times [\Hilb_X^\beta]^\vir\Big)
\]
where $e$ denotes the Euler class. 
When $\Hilb_X^\beta$ is singular, however, this is a special case of the main result in \cite{GT}.
\begin{proposition}\label{pvir}
The virtual fundamental class of $X^{[m]}\times \Hilb_X^\beta$ as a Quot scheme, or equivalently as a nested Hilbert scheme, is 
\[[X^{[m]}\times \Hilb_X^\beta]^\vir=e\big(R\HHom_\pi(\OO_\ZZ,\OO(\DD))\big) \cap
\Big([X^{[m]}]\times [\Hilb_X^\beta]^\vir\Big).
\]
\end{proposition}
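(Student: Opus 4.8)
The plan is to deduce Proposition \ref{pvir} from the general diagonal-pushforward theorem of \cite{GT} applied to the nested Hilbert scheme $X^{[m,0]}_\beta$, using the $K$-theoretic identity \eqref{vtan} already established. First I would recall the setup of \cite{GT}: for a nested Hilbert scheme $X^{[m_1,m_2]}_\beta$ with the perfect obstruction theory of \cite{GSY}, the virtual class is expressed via the virtual class of $X^{[m_2]}_\beta$ (here $m_2 = 0$, so $X^{[0]}_\beta = \Hilb^\beta_X$) capped against the Euler class of an explicit tautological bundle coming from the ``difference'' between the two nesting levels. The key point is that in our special case $m_1 = m$, $m_2 = 0$, the underlying scheme splits as a product $X^{[m]} \times \Hilb^\beta_X$ by Lemma \ref{lstructure}, and the obstruction theory correspondingly decomposes: the $X^{[m]}$ directions are unobstructed (the Hilbert scheme of points on a surface is smooth), so the only virtual contributions are $[\Hilb^\beta_X]^\vir$ and the twisting bundle.

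The key steps, in order, are: (1) identify, via \eqref{vtan}, the virtual tangent bundle as $T_{X^{[m]}} + T^\vir_{\Hilb^\beta_X} - R\HHom_\pi(\OO_\ZZ, \OO(\DD))$, and invoke the remark following \eqref{vtan} that $R\HHom_\pi(\OO_\ZZ, \OO(\DD)) = \big(\pi_*\OO_\ZZ(K_X - \DD)\big)^\vee$ is a genuine vector bundle of rank $m$ (Serre duality plus vanishing of higher direct images, which holds since $\OO_\ZZ$ is supported in dimension $0$); (2) observe that the virtual dimension of $X^{[m,0]}_\beta$ equals $\dim X^{[m]} + \vd_\beta - m = 2m + \vd_\beta - m$, matching the expected dimension of the class on the right-hand side; (3) quote the main theorem of \cite{GT} for the nested Hilbert scheme $X^{[m,0]}_\beta$, which expresses $[X^{[m,0]}_\beta]^\vir$ precisely as the Euler class of the relevant tautological bundle capped with $[X^{[m]}] \times [\Hilb^\beta_X]^\vir$; (4) match the tautological bundle in the statement of \cite{GT} with $R\HHom_\pi(\OO_\ZZ, \OO(\DD))$ by comparing it against the ``extra'' term in \eqref{vtan}, using the compatibility of virtual tangent bundles under \cite[Theorem 4.6]{S} exactly as in the previous proposition. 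Finally, (5) note that when $\Hilb^\beta_X$ happens to be smooth one can see this directly: the product obstruction theory gives $[X^{[m]} \times \Hilb^\beta_X]^\vir = e(\text{obstruction bundle}) \cap [X^{[m]} \times \Hilb^\beta_X]$, and the obstruction bundle is exactly $R\HHom_\pi(\OO_\ZZ, \OO(\DD))$ shifted appropriately; this serves as a sanity check and as motivation.

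The main obstacle will be step (4): carefully matching the normalization and the precise form of the tautological bundle appearing in the theorem of \cite{GT} — which is typically stated for a two-step nesting $Z_1 \supseteq Z_2$ with both $m_i$ possibly nonzero and with a divisor twist — against our bundle $R\HHom_\pi(\OO_\ZZ, \OO(\DD))$ in the degenerate case $m_2 = 0$. One must check that the hypotheses of \cite{GT} (e.g. that the relevant obstruction sheaf is locally free, or that a certain cosection-type or positivity condition is met) are satisfied here, and that no correction terms survive. Because the underlying scheme is a product and the $X^{[m]}$ factor is smooth of the expected dimension, the reduction is clean, but it requires quoting \cite{GT} at the right level of generality. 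Once the bundle is identified, the conclusion is immediate from the cited result, and the equivalence of the Quot-scheme and nested-Hilbert-scheme virtual classes was already proven in the previous proposition, so the phrase ``or equivalently as a nested Hilbert scheme'' needs no further argument.
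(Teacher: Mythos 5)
Your proposal is correct and follows essentially the same route as the paper: both apply the main theorem of \cite{GT} (Theorem 5 there) to the nested Hilbert scheme $X^{[m,0]}_\beta$ and identify the relevant tautological (Carlsson--Okounkov) class with $R\HHom_\pi(\OO_\ZZ,\OO(\DD))$, whose Euler class then caps $[X^{[m]}]\times[\Hilb_X^\beta]^\vir$. The only difference is cosmetic: the paper pins down the bundle by a three-line direct computation of the Carlsson--Okounkov class, $R\pi_*\OO(\DD)-R\HHom_\pi(I_\ZZ,\OO(\DD))=R\HHom_\pi(\OO_\ZZ,\OO(\DD))$, rather than by the virtual-tangent-bundle comparison you sketch in step (4), and no extra positivity or cosection hypotheses from \cite{GT} need to be verified.
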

\begin{proof}
We use \cite[Theorem 5]{GT}. By the Theorem, the virtual fundamental class on the left is identified with the Euler class of the Carlsson-Okounkov $K$-theory class. In our case, the relevant Carlsson-Okounkov $K$-theory class is
\begin{align*}
\mathsf{CO}^{[m,0]}_\beta&:=R\pi_*\OO(\DD)-R\HHom_\pi(I_\ZZ, \OO(\DD))\\
&=R\pi_*\OO(\DD)-R\HHom_\pi(\OO-\OO_\ZZ, \OO(\DD))\\
&=R\HHom_\pi(\mathcal{O}_\ZZ,\OO(\DD)).
\end{align*}
This directly implies the Proposition. 
\end{proof}

\subsection{Universality of the Quot scheme invariants}\label{suniv}
\noindent In this subsection, we prove Theorem \ref{tuniv}. Specifically, we study the equivariant Quot scheme invariants for arbitrary $N$ and express them universally in terms of higher Seiberg-Witten invariants. The proof of Theorem \ref{tuniv} breaks into three steps. First, we apply the virtual localization theorem of \cite{GP} and express the contributions from each fixed locus as an integral. Second, we pushforward the resulting integral under the Abel-Jacobi map. The Seiberg-Witten invariants naturally arise in this step. Third, we apply a generalization of the recursive argument in \cite{EGL} to describe tautological integrals over the Hilbert schemes of points in terms of universal series. 
\subsubsection{Step 1.}
Consider a diagonal $\C^*$-action on $\C^N$ with distinct weights $w_1, \dots, w_N$. We use the bracket to denote the equivariant weights. For example, the previous diagonal $\C^*$-action on $\C^N$ is denoted by 
\[\C[w_1]+\cdots +\C[w_N].
\]
This induces an $\C^*$-action on $\Quot_X(\C^N\!,\beta,n)$ via the associated $\C^*$-action on the middle term of the sequences
\[0\rightarrow S\rightarrow \C^N\otimes \mathcal{O}_X\rightarrow Q\rightarrow0.
\]
By the weight decomposition of a subsheaf, the fixed point loci parametrize short exact sequence
\[0\rightarrow S_1 [w_1] + \cdots +S_N [w_N]
\rightarrow \mathcal{O}[w_1] + \cdots +\mathcal{O}[w_N]
\rightarrow Q_1 [w_1]+\cdots +Q_N [w_N]
\rightarrow0
\]
for each decomposition of the discrete data 
\[n=\sum_{i=1}^N n_i,\quad \beta=\sum_{i=1}^N\beta_i\ ,\quad  \textnormal{ where } \quad n_i=\chi(Q_i)
,\ \  \beta_i=c_1(Q_i).
\]
Therefore, 
\[\left(\Quot_X(\C^N\!,\beta,n)\right)^{\C^*}
=\!\!\!\!\mathop{\bigsqcup}_{\substack{n=n_1+\cdots +n_N\\ \beta=\beta_1+\cdots+ \beta_N}} 
\!\!\!\!\Quot_X(\C^1\!, \beta_1,n_1)\times \cdots \times \Quot_X(\C^1\!, \beta_N,n_N).
\]

Now we study the contributions of each fixed locus 
\[F=\Quot_X(\C^1\!, \beta_1,n_1)\times \cdots \times \Quot_X(\C^1\!, \beta_N,n_N)\]
corresponding to the decomposition $\underline{n}=(n_1, \dots, n_N)$ and $\underline{\beta}=(\beta_1, \dots, \beta_N)$.  Encode the lengths of the zero dimensional parts in a vector
\[\underline{m}=(m_1, \dots, m_N)\quad \text{with}\quad m_i=n_i+\frac{\beta_i(\beta_i+K_X)}{2}.\] 
By Lemma \ref{lstructure}, we can identify
\[F=\Big(X^{[m_1]}\times\cdots \times  X^{[m_N]}\Big)
\times \Big(\Hilb_X^{\beta_1}\times \cdots \times \Hilb_X^{\beta_N}\Big)
\]
whose universal quotient is given by
\[0\rightarrow \sum_{i=1}^N I_{\ZZ_i}(-\DD_i)[w_i]\rightarrow\sum_{i=1}^N \OO_{F\times X}[w_i]\rightarrow\sum_{i=1}^N \mathcal{Q}_i[w_i]\rightarrow0.
\]
Using this, we can compute the virtual tangent bundle of the Quot scheme restricted to the fixed locus $F$
\[T^\vir_\Quot\Big\vert_F=\sum_{i,\, j=1}^NR\HHom_\pi(I_{\ZZ_i}(-\DD_i), \QQ_j)[w_j-w_i].
\]
By taking the fixed part and moving part, we obtain the virtual tangent bundle and virtual normal bundle of $F$, respectively:
\begin{align}
T^\vir_F&=\sum_{i=1}^N R\HHom_\pi(I_{\ZZ_i}(-\DD_i), \QQ_i), \nonumber\\
N^\vir_F&=\sum_{i\neq j} R\HHom_\pi(I_{\ZZ_i}(-\DD_i), \QQ_j)[w_j-w_i]=:\sum_{i\neq j} N_{ij}[w_j-w_i]. \label{xx}
\end{align}
The first equation above shows that the induced obstruction theory of $F$ splits into the obstruction  theories of the factors. The splitting of the obstruction theory, together with Proposition \ref{pvir}, gives
\begin{align*}
[F]^\vir&=[\Quot_X(\C^1\!, \beta_1, n_1)]^\vir\times \cdots \times [\Quot_X(\C^1\!, \beta_N, n_N)]^\vir \\
&=\prod_{i=1}^N e(\Obs_i)\cap \Big([X^{[m_1]}]\times\cdots \times [X^{[m_N]}]\times  [\Hilb_X^{\beta_1}]^\vir\times \cdots \times [\Hilb_X^{\beta_N}]^\vir\Big),
\end{align*}
where
\begin{equation}\label{x}
\Obs_i:=R\HHom_\pi(\OO_{\ZZ_i},\OO(\DD_i))
\end{equation}
are vector bundles of rank $m_i$. Write
\[[X^{[\underline{m}]}]:=[X^{[m_1]}]\times\cdots \times [X^{[m_N]}], \quad 
[\Hilb_X^{\underline{\beta}}]^\vir:=[\Hilb_X^{\beta_1}]^\vir\times \cdots \times [\Hilb_X^{\beta_N}]^\vir.
\]
By the virtual localization theorem \cite{GP}, we have
\begin{align*}
\chi_{-y}^\vir\big(\Quot_X(\C^N\!,\beta,n)\big)=\sum_F\int_{[F]^\vir} \frac{\XX_{-y}(T^\vir_\Quot\big\vert_F)}{e(N^\vir_F)}.
\end{align*}
Using the above formula of $[F]^\vir$ and the decomposition 
\[T^\vir_\Quot\Big\vert_F=T^\vir_F+N^\vir_F=\sum_i\Big(T_{X^{[m_i]}}+T^\vir_{\Hilb_X^{\beta_i}}-\Obs_i\Big)+N^\vir_F,
\]
the contribution of $F$ to the virtual $\chi_{-y}$-genus is
\begin{align} \label{step1}
\int_{[X^{[\underline{m}]}]\times[\Hilb_X^{\underline{\beta}}]^\vir}
\frac{e}{\XX_{-y}}\left(\sum_i \Obs_i\right)\  
\frac{\XX_{-y}}{e}\left(N^\vir_F\right)\ 
\XX_{-y}\left(\sum_i T_{X^{[m_i]}}+T^\vir_{\Hilb_X^{\beta_i}}\right).
\end{align}
\subsubsection{Step 2.}\label{sssss}
We begin with an observation that clarifies the dependence of the integrand in (\ref{step1}) on the ranks of $K$-theory classes. Let $E$ be a vector bundle of rank $r$ with Chern roots $\alpha_1, \dots, \alpha_r$. Write
\begin{align*}
\XX_{-y}(E)
&=\prod_{i=1}^r \frac{\alpha_i(1-ye^{-\alpha_i})}{1-e^{-\alpha_i}}
=(1-y)^r\cdot \prod_{i=1}^r \frac{\alpha_i\left(1+\frac{y}{1-y}(1-e^{-\alpha_i})\right)}{1-e^{-\alpha_i}}.
\end{align*}
From here, it can be seen that the dependence of $\XX_{-y}(E)$ on the rank $r$ is present in the prefactor $(1-y)^r$. The remaining terms give power series, independent of $r$, in the Chern classes of $E$ with coefficients in $\Q(y)$. This is because the dependence on the rank only comes from
\[ r=\sum_{i=1}^r(\alpha_i)^0,
\]
whereas the remaining terms involve only positive powers of Chern roots $\alpha_i$. There are no issues about convergence of the power series because as long as we fix a dimension, all but finitely many terms vanish. Similarly, for each $w\neq 0$, we have 
\begin{align*}
\frac{\XX_{-y}}{e}\big(E[w]\big)&=\prod_{i=1}^r\frac{1-ye^{-w-\alpha_i}}{1-e^{-w-\alpha_i}}
=\left(\frac{1-ye^{-w}}{1-e^{-w}}\right)^r\cdot
\prod_{i=1}^r\frac{1+\frac{ye^{-w}}{1-ye^{-w}}(1-e^{-\alpha_i})}
{1+\frac{e^{-w}}{1-e^{-w}}(1-e^{-\alpha_i})}\, ,
\end{align*}
where the  dependence on the rank is expressed in the first factor and the other factor is a power series in the Chern classes of $E$ with coefficients in $\Q(y,e^{w})$. 

By these observations, the integral (\ref{step1}) becomes
\begin{align*}
(1-y)^{\sum_i m_i+\vd_{\beta_i}} \cdot 
\prod_{i\neq j} \left(\frac{1-ye^{w_i-w_j}}{1-e^{w_i-w_j}}
\right)^{\rk (N_{ij})}\cdot \int_{[X^{[\underline{m}]}]\times[\Hilb_X^{\underline{\beta}}]^\vir}
\mathsf{A}_{\underline{m}} \, ,
\end{align*}
where $\mathsf{A}_{\underline{m}}$ is a power series of the Chern classes of 
\begin{equation}\label{Kclasses step2'}
\Obs_i, \quad N_{ij}, \quad T_{X^{[m_i]}}, \quad T^\vir_{\Hilb_X^{\beta_i}} 
\end{equation}
with coefficients in $\Q(y, e^{w_1}, \dots, e^{w_N})$. The universal power series $\mathsf{A}_{\underline{m}}$ depends on $\underline{m}$ because of  $e(\Obs_i)=c_{m_i}(\Obs_i)$. It is straightforward to check
\begin{gather*}
\sum_i m_i+\vd_{\beta_i}=n+\sum_i \beta_i^2,\\
\rk(N_{ij})=\chi\big(I_{Z_i}(-D_i), \OO-I_{Z_j}(-D_j)\big)=n_j+\beta_i.\beta_j.
\end{gather*}

Now we pushforward the integral 
\begin{equation}\label{step2'}
\int_{[X^{[\underline{m}]}]\times[\Hilb_X^{\underline{\beta}}]^\vir}
\mathsf{A}_{\underline{m}}
\end{equation}
along the product of Abel-Jacobi maps
\[ \mathsf{AJ}_{\beta_i}:\Hilb_X^{\beta_i}\rightarrow\Pic_X^{\beta_i}.
\]
Recall the geometric setup for the Seiberg-Witten invariants in Subsection \ref{sSW}. We fix the point $p\in X$ once and for all. Define the tautological line bundle over $\Hilb_X^{\beta_i}$ by
\[\LL_i:=\OO(\DD_i)|_{\Hilb_X^{\beta_i}\times \{p\}}
\]
and denote $h_i:=c_1(\LL_i)$. By the seesaw theorem, we have 
\[\OO(\DD_i)=\mathcal{P}_i\otimes \LL_i
\]
on $\Hilb_X^{\beta_i}\times X$, where $\mathcal{P}_i$ is the Poincar\'e line bundle on $\Pic_X^{\beta_i}\times X$ normalized at $p$. Using this relation and equations (\ref{xxx}), (\ref{xx}), (\ref{x}), we can express each $K$-theory class in (\ref{Kclasses step2'}) as a combination of $K$-theory classes pulled back from the Abel-Jacobi maps tensored by line bundles $\LL_i^{\pm}$:
\begin{align*}
\Obs_i&=R\HHom_\pi(\OO_{\ZZ_i}, \mathcal{P}_i)\otimes \LL_i,\\
N_{ij}&=R\HHom_\pi(I_{\ZZ_i}\!\otimes\!\mathcal{P}_i^{-1},\OO)\otimes \LL_i
-\!R\HHom_\pi(I_{\ZZ_i}\!\otimes\!\mathcal{P}_i^{-1},I_{\ZZ_j}\!\otimes\!\mathcal{P}_j^{-1})\otimes \LL_i\otimes \LL_j^{-1},\\
T_{X^{[m_i]}}&=T_{X^{[m_i]}},\\
T^{\vir}_{\Hilb_X^{\beta_i}}&=(R\pi_*\mathcal{P}_i)\otimes \LL_i-R\pi_*\OO.
\end{align*}
Recall the standard formula 
\begin{equation}\label{chern class formula}
c_k(E\otimes L)=\sum_{a+b=k} {r-a\choose b} c_a(E) \, c_1(L)^b
\end{equation}
for the Chern classes of a rank $r$ vector bundle $E$ and a line bundle $L$. In particular, this expression is a polynomial of $r,\, c_1(L)$ and the Chern classes of $E$. Moreover, the degree of $r$ is bounded by the degree of $c_1(L)$ in each monomial. Note that the ranks of the $K$-theory classes
\[R\HHom_\pi(\OO_{\ZZ_i}, \mathcal{P}_i),\ \ R\HHom_\pi(I_{\ZZ_i}\!\otimes\!\mathcal{P}_i^{-1},\OO), \ \ R\HHom_\pi(I_{\ZZ_i}\!\otimes\!\mathcal{P}_i^{-1},I_{\ZZ_j}\!\otimes\!\mathcal{P}_j^{-1}),\ \ R\pi_*\mathcal{P}_i
\]
are linear combinations of $m_i,\, \chi(\beta_i),\, \chi(\beta_i, \beta_j)$. By the above Chern class formula, this implies that the Chern classes of (\ref{Kclasses step2'}) are polynomials, depending on $\underline{m}$, of $\chi(\beta_i),\, \chi(\beta_i, \beta_j),\, h_i$ and the Chern classes of
\begin{equation}\label{Kclasses step2}
R\HHom_\pi(I_{\ZZ_i}\!\otimes\!\mathcal{P}_i^{-1},\OO),\ \ R\HHom_\pi(I_{\ZZ_i}\!\otimes\!\mathcal{P}_i^{-1},I_{\ZZ_j}\!\otimes\!\mathcal{P}_j^{-1}), \ \
R\pi_*\mathcal{P}_i,\ \ 
T_{X^{[m_i]}}.
\end{equation}
We omitted $R\HHom_\pi(\OO_{\ZZ_i},\mathcal{P}_i)$ in the list because
\[R\HHom_\pi(\OO_{\ZZ_i},\mathcal{P}_i)=R\pi_*\mathcal{P}_i-R\HHom_\pi(I_{\ZZ_i}\otimes \mathcal{P}_i^{-1},\OO).
\]
Therefore one can rewrite the universal power series $\mathsf{A}_{\underline{m}}$ as 
\[ \mathsf{A}_{\underline{m}}=\sum_{k_1, \dots, k_N\geq 0} h_1^{k_1}\cdots h_N^{k_N} \cdot \mathsf{B}_{\underline{k}, \underline{m}},
\]
where $\mathsf{B}_{\underline{k}, \underline{m}}$ is the universal power series of integers $\chi(\beta_i),\, \chi(\beta_i, \beta_j)$ and the Chern classes of the elements in (\ref{Kclasses step2})
with coefficients in $\Q(y, e^{w_1}, \dots, e^{w_N})$. Note that degree of $\chi(\beta_i),\, \chi(\beta_i, \beta_j)$ in $\mathsf{B}_{\underline{k}, \underline{m}}$ is bounded by $\sum_i k_i$ because of the remark below the Chern class formula \eqref{chern class formula}. It is clear from the definition of the Seiberg-Witten invariants that
\[ (\mathsf{AJ}^{\underline{\beta}})_*
\left( h_1^{k_1}\cdots h_N^{k_N} \cap [\Hilb_X^{\underline{\beta}}]^\vir\right)
=\SW^{k_1}(\beta_1)\times\cdots \times \SW^{k_N}(\beta_N)=:\SW^{\underline{k}}(\underline{\beta})\, .
\]
Therefore (\ref{step2'}) becomes 
\begin{equation}\label{step2}
\sum_{k_1, \dots, k_N\geq 0} 
\int_{[X^{[\underline{m]}}]\times
\SW^{\underline{k}}(\underline{\beta}) }
\mathsf{B}_{\underline{k},\underline{m}}
\end{equation}
by the projection formula.
\subsubsection{Step 3.}
In this step, we explain the generalized version of the recursive argument in \cite{EGL} which replaces the product of Hilbert schemes of points in (\ref{step2}) by the product of copies of $X$. Finally we integrate over the product of copies of $X$ to complete the proof of Theorem \ref{tuniv}. 

Given a smooth projective surface $X$ and $F\in K^0(X)$, the original version of the recursive argument in \cite{EGL} concerns integrals of the form \[\int_{X^{[m]}} \mathsf{P}\left(F^{[m]},T_{X^{[m]}}\right)
\]
where $\mathsf{P}$ is a polynomial in the Chern classes of the arguments. Inductively, such integrals can be transferred over $X^{[m-1]}\times X$, then $X^{[m-2]}\times X^2$, and so on. After the $k$'th recursion, one gets an integral
\[\int_{X^{[m-k]}\times X^k}\mathsf{P}'\left(
F^{(j)}, F^{[m-k]}, T_{X}^{(j)}, T_{X^{[m-k]}}, I_{\ZZ}^{(j)}, \OO_{\Delta_{j_1, j_2}}
\right)
\]
for some other polynomial $\mathsf{P}'$ of the Chern classes which only depends on $\mathsf{P}$ and $m,\, k,\, \rk(F)$. Here $\Delta_{j_1, j_2}$ denotes the pull back of the diagonal. We use the superscript $(j)$ to denote the location from which factor of $X^k$ the corresponding $K$-theory classes are pulled back. This notation will be used for the rest of Step 3. 

The recursive argument needs to be generalized to deal with the integral
\begin{equation}\label{step3}
\int_{
[X^{[\underline{m]}}]\times
\SW^{\underline{k}}(\underline{\beta}) }
\mathsf{B}_{\underline{k},\underline{m}}.
\end{equation}
Kool and Thomas \cite[Section 4]{KT} already considered such a generalization when $N=1$.  In our case, we integrate over a product of $N$ Hilbert schemes of points. The subscript $1\leq i\leq N$ will be used to denote a specific Hilbert scheme factor out of the product. Furthermore, we have $K$-theory elements $F_i\in K^0(X\times B_i)$ for some smooth projective varieties $B_i$. In our case, $B_i$ and $F_i$ are $\Pic_X^{\beta_i}$ and the Poincar\'e line bundle $\mathcal{P}_i\in K^0(X\times \Pic_X^{\beta_i})$, respectively. Henceforth, we assume that $\rk(F_i)=1$ for all $i$. 

We denote the tautological sheaves appearing in (\ref{Kclasses step2}) by
\begin{align*}
F_i^{[\![m_i]\!]}&:=R\HHom_\pi(I_{\ZZ_i}\otimes F_i^\vee, \OO)\in K^0(X_i^{[m_i]}\times B_i),\\
(F_i, F_{i'})^{[\![m_i, m_{i'}]\!]}&:=R\HHom_\pi(I_{\ZZ_i}\otimes F_i^\vee, I_{\ZZ_{i'}}\otimes F_{i'}^\vee)\in K^0(X_i^{[m_i]}\times X_{i'}^{[m_{i'}]}\times B_i\times B_{i'})
\end{align*}
for each $i\neq i'$. (The index $j$ in Step 2 is replaced by $i'$ here.) 
With minor modifications, the recursion of \cite{EGL} can be applied to integrals of the form 
\begin{equation}\label{start}
\int_{[X_1^{[m_1]}]\times \cdots \times [X_N^{[m_N]}]\times [B_1]\times \cdots [B_N]}
\mathsf{P}\left(F_i^{[\![m_i]\!]}, (F_i, F_{i'})^{[\![m_i, m_{i'}]\!]}, R\pi_*F_i, T_{X^{[m_i]}}
\right),
\end{equation}
where $\mathsf{P}$ is a power series in the Chern classes of the arguments. We work with a power series rather than a polynomial because the dimensions of $B_i$ are not specified. Only non-trivial modification needed here is to deal with the mixed terms $(F_i, F_{i'})^{[\![m_i, m_{i'}]\!]}$. We show here part of the necessary computations to apply the recursion. We closely follow the notations of \cite{EGL}. Consider a diagram
\begin{center}\begin{tikzcd}
X_i^{[m_i-1, m_i]}\times X_{i'}^{[m_{i'}]}\times B_i\times B_{i'} \arrow[r, "\Phi"] \arrow[d, "\Psi"] & X_i^{[m_i-1]}\times X_{i'}^{[m_{i'}]}\times X \times B_i\times B_{i'} \\
X_i^{[m_i]}\times X_{i'}^{[m_{i'}]}\times B_i\times B_{i'} .                    &  
\end{tikzcd}\end{center}
Over $X_i^{[m_i-1, m_i]}\times X$, we have the universal exact sequence
$$0\rightarrow I_{\ZZ_i}\rightarrow I_{\WW_i}\rightarrow j_*\mathcal{L}\rightarrow 0$$
where the line bundle $\mathcal{L}$ over $X_i^{[m_i-1, m_i]}$ and the morphism $j=(\id, \rho)$ are defined in \cite{EGL}. Using this exact sequence, we obtain 
\begin{align*}
\Psi^*(F_i, F_{i'})^{[\![m_i, m_{i'}]\!]}
&=\Phi^*(F_i, F_{i'})^{[\![m_i-1, m_{i'}]\!]}-R\HHom_\pi(j_*\LL\otimes F_i^\vee, I_{\ZZ_{i'}}\otimes F_{i'}^\vee)
\end{align*}
where we suppressed various pull backs. By relative Serre-duality, we simplify the second term
\begin{align*}
R\HHom_\pi(j_*\LL\otimes F_i^\vee, I_{\ZZ_{i'}}\otimes F_{i'}^\vee)
&=R\HHom_\pi(I_{\ZZ_{i'}}\otimes F_{i'}^\vee, j_*\LL\otimes F_i^\vee\otimes K_X)^\vee\\
&=R\pi_*j_*\Big(\LL\otimes j^*(I_{\ZZ_{i'}}^\vee\otimes F_i^\vee\otimes F_{i'}\otimes K_X)\Big)^\vee\\
&=\LL^\vee\otimes\, j^*(I_{\ZZ_{i'}}\otimes F_i\otimes F_{i'}^\vee\otimes K_X^\vee)\\
&=\LL^\vee\otimes \Phi^*(I_{\ZZ_{i'}}\otimes F_i\otimes F_{i'}^\vee\otimes K_X^\vee).
\end{align*}
This result is analogous to \cite[Lemma 2.1]{EGL} which is a key to the recursion. We leave the rest of the details to the reader. See also \cite[Section 5]{GNY2} for the similar discussion involving the mixed terms. 

After the $k$'th recursion, the integral \eqref{start} becomes
\[\int_{[X_1^{[n_1]}]\times \cdots \times [X_N^{[n_N]}]\times [X^k]\times [B_1]\times \cdots [B_N]}
\mathsf{P}',
\]
where $\mathsf{P}'$ is a power series of the Chern classes of the following $K$-theory classes:
\begin{align*}
&1) \textnormal{ Tautological sheaves: }F_i^{(j)}, F_i^{[\![n_i]\!]}, (F_i, F_{i'})^{[\![n_i, n_{i'}]\!]},\\
&2) \textnormal{ Tangent bundles: }T_{X}^{(j)}, T_{X_i^{[n_i]}},\\
&3) \textnormal{ Universal ideal sheaves: }I_{\ZZ_i}^{(j)},\\
&4) \textnormal{ Diagonals: }\OO_{\Delta_{j_1, j_2}},\\
&5) \textnormal{ Pullbacks of } R\pi_* F_i \textnormal{ from } B_i.
\end{align*}
The end result after $m=\sum m_i$ recursions is an integral 
\[\int_{[X^m]\times [B_1]\times\cdots\times[B_N]}\mathsf{P}''\left(
F_i^{(j)}, T_{X}^{(j)}, \OO_{\Delta_{j_1,j_2}}, R\pi_*F_i
\right)
\]
for some power series $\mathsf{P}''$ of the Chern classes of the $K$-theory elements which only depends on $\mathsf{P}$, $\underline{m}$ and the ranks of $F_i$. Since we assumed that $\rk(F_i)=1$, we ignore the dependence on $\rk(F_i)$.  By using the Chern classes of diagonals, this can be further integrated over the $m$-fold product of $X$ yielding
\[\int_{[B_1]\times\cdots \times[B_N]}\mathsf{P}'''
\Big(\pi_*\big(c_{I_1}(F_1)\cdots c_{I_N}(F_N)\cdot c_J(T_X) \big)\Big),
\]
where $c_I$ denotes the monomial of the product of Chern classes corresponding to the multiset $I$ and $\pi$ denotes the projection map $X\times B_1\times \cdots \times B_N\rightarrow B_1\times \cdots \times B_N$.

We apply the above argument to the integral (\ref{step3}). Since the recursive procedure only affects the factors which are Hilbert scheme of points, no change is needed when we replace $[\Pic_X^{\beta_1}]\times\cdots\times[\Pic_X^{\beta_N}]$ by a product of higher Seiberg-Witten invariants $\SW^{k_1}(\beta_1)\times\cdots\times\SW^{k_N}(\beta_N)$ regarded as a homology class. Therefore the integral (\ref{step3}) becomes
\[\int_{[\SW^{k_1}(\beta_1)]\times\cdots\times[\SW^{k_N}(\beta_N)]} \mathsf{B}_{\underline{k},\underline{m}}''',
\]
where $\mathsf{B}_{\underline{k}, \underline{m}}'''$ is a power series of $\chi(\beta),\, \chi(\beta_i,\beta_j)$ and
\begin{equation}\label{final terms}
\pi_*\big(c_{1}(\mathcal{P}_1)^{\ell_1}\cdots c_1(\mathcal{P}_N)^{\ell_N}\cdot c_J(T_X) \big)
\end{equation}
with coefficients in $\Q(y, e^{w_1}, \dots, e^{w_N})$.  Note that the first Chern class of the Poincar\'e line bundle has a decomposition
\[c_1(\mathcal{P}_i)=(\beta_i,\id_i,0)\in H^2(X)\oplus \Big(H^1(X)\otimes H^1(X)_i^\vee\Big)\oplus \Lambda^2H^1(X)_i^\vee
\]
after the identification $H^1(\Pic_X^{\beta_i})=H^1(X)^\vee_i$. The subscript $i$ again indicates the position among the $N$ Picard group factors. For degree reasons, the classes in (\ref{final terms}) are pushforwards of linear combinations of 
\[c_1(T_X)^2,\ \  c_2(T_X),\ \  \beta_i^2,\ \  \beta_i.c_1(T_X),\ \  \beta_i.\beta_j,\ \  c_1(T_X).\id_s\id_t,\ \  \beta_i.\id_s\id_t,\ \  \id_s\id_t\id_p\id_q\] 
under $\pi_*$. 
These contributions also cover $\chi(\beta_i),\,  \chi(\beta_i, \beta_j)$ by the Riemann-Roch theorem. Recall the definition of $[\alpha]_{s_1\leq \cdots\leq s_k}$ in Paragraph \ref{subsection universality}. One can check, for example by  picking a basis of $H^1(X)$, that 
\[\pi_*(\alpha. \id_{s_1}.\cdots.\id_{s_k})= \textnormal{constant} \cdot [\alpha]_{s_1\leq \cdots\leq s_k},
\] 
where the constant factor is determined by the position of equalities in $s_1\leq \dots\leq s_k$. Therefore, we conclude that (\ref{step3}) becomes
\[\int_{[\SW^{\underline{k}}(\underline{\beta})]}
\mathsf{Q}_{\underline{k},\underline{m}}\left(K_X^2,\, \chi(\mathcal{O}_X),\, \beta_i^2,\,  \beta_i.K_X,\,  \beta_i.\beta_j,\, 
[K_X]_{s\leq t},\,  [\beta_i]_{s\leq t},\,  [1]_{s\leq t\leq p\leq q}\right),
\]
where $\mathsf{Q}_{\underline{k}, \underline{m}}$ is a universal power series, which only depends on $\underline{k}, \underline{m}$, with coefficients in $\Q(y,e^{w_1}, \dots, e^{w_N})$. Furthermore, one can show that $\mathsf{Q}_{\underline{k}, \underline{m}}$ is a polynomial in the first five degree zero cohomology classes by carefully tracing back the argument. This completes the proof. 

\subsection{The multiplicative structure of the Quot scheme invariants}\label{smult}
\noindent In this subsection, we establish the multiplicative structure of the equivariant Quot scheme invariants for a curve class of Seiberg-Witten length $N$. This will be slightly weaker than Theorem \ref{tmult}. The proof of Theorem \ref{tmult} will be completed in Subsection \ref{s0}. 

Recall from Definition \ref{dSW} that a curve class $\beta$ is of Seiberg-Witten length $N$ if for any effective decomposition 
\[\beta=\sum_{i=1}^N \beta_i \quad \textnormal{such that} \quad \SW(\beta_i)\neq 0 \  \textnormal{ for all } i,
\]
we have $\vd_{\beta_i}=0$ for all $i$. By Theorem \ref{tuniv}, each fixed locus corresponding to a decomposition $\beta=\sum \beta_i$ does not contribute to the Quot scheme invariants if $\SW(\beta_i)=0$ for some $i$. Therefore, for a curve class $\beta$ of Seiberg-Witten length $N$, it suffices to consider effective decompositions with $\vd_{\beta_i}=0$ for all $i$. This implies that no higher Seiberg-Witten invariants are involved in the calculation. As we noted in Remark \ref{rSW}, any curve classes on  surfaces with $p_g>0$ will satisfy this property. 

Let $X$ be a smooth projective surface and $\beta$ be a curve class of Seiberg-Witten length $N$. Consider the generating series of the equivariant Quot scheme invariants
\[Z_{X, N,\,\beta}(q\,|\,\underline{w})=\sum_{n\in \Z} q^n \chi_{-y}^\vir\big(\Quot_X(\C^N\!,\beta,n)\big).\]
We already observed that the $\C^{*}$-fixed loci over $\Quot_X(\C^N, \beta, n)$ for all $n$ are indexed by
\[F_{\underline{m},\underline{\beta}}=\Big(X^{[m_1]}\times\cdots \times  X^{[m_N]}\Big)
\times \Big(\Hilb_X^{\beta_1}\times \cdots \times \Hilb_X^{\beta_N}\Big)
\]
for some effective decomposition $\beta=\sum \beta_i$ and $m_1, \dots, m_N\geq 0$. Note that $F_{\underline{m},\underline{\beta}}$ contributes to the Quot scheme invariants only if $\vd_{\beta_i}=0$ for all $i$ by the assumption. In this case, the exponent $n$ of the formal variable $q$ is given by 
\[n=\sum_i\left(m_i-\frac{\beta_i.(\beta_i+K_X)}{2}\right)
=\sum_i \Big(m_i-\vd_{\beta_i}-\beta_i.K_X\Big)
=\left(\sum_i m_i\right)-\beta.K_X.
\]
Therefore the generating series of the equivariant Quot scheme invariants becomes
\begin{equation}\label{eqnSW}
Z_{X, N,\,\beta}(q\,|\,\underline{w})=q^{-\beta.K_X}\!\!\!\!\!\!\!
\sum_{\substack{\beta=\sum \beta_i\\ s.t. \ \vd_{\beta_i}=0}}
\sum_{m_i\geq 0} q^{\sum m_i}
\int_{[X^{[\underline{m}]}]\times[\Hilb_X^{\underline{\beta}}]^\vir} (*)
\end{equation}
with 
\[(*)=\frac{e}{\XX_{-y}}\left(\sum_i \Obs_i\right)\  
\frac{\XX_{-y}}{e}\left(N^\vir_F\right)\ 
\XX_{-y}\left(\sum_i T_{X^{[m_i]}}+T^\vir_{\Hilb_X^{\beta_i}}\right)
\]
as in the integral (\ref{step1}). 

Since we have $\vd_{\beta_i}=0$ for all $i$, we can write
$$[\Hilb_X^{\beta_i}]^\vir=\sum_{j=1}^s n_{ij}[x_{ij}]\in H_0(\Hilb_X^{\beta_i}, \Z)
$$
where $x_{ij}$'s are arbitrarily chosen from each connected components of $\Hilb_X^{\beta_i}$. By taking degree, we know that $\sum_j n_{ij}=\SW(\beta_i)$. For the purpose of computations below, we may assume
$$[\Hilb_X^{\beta_i}]^\vir=\SW(\beta_i)[\mathsf{pt}_i].$$
Pick representatives for the point classes $[\mathsf{pt}_i]$, that is, pick exact sequences
\[0\rightarrow \OO(-D_i)\rightarrow\OO_X\rightarrow\OO_{D_i}\rightarrow0
\]
such that $c_1(\OO(D_i))=\beta_i$. After restriction to these point representatives, the $K$-theory classes involved in $(*)$ become
\begin{align*}
\Obs_i&=R\HHom_\pi(\OO_{\ZZ_i}, \OO(D_i)),\\
N_F^\vir&=\sum_{i\neq j} R\HHom_\pi(I_{\ZZ_i}(-D_i), \OO-I_{\ZZ_j}(-D_j))[w_j-w_i],\\
T^{\vir}_{\Hilb_X^{\beta_i}}&=0.
\end{align*}
Therefore (\ref{eqnSW}) is equal to 
\[q^{-\beta.K_X}\!\!\!\!\!\!\!
\sum_{\substack{\beta=\sum \beta_i\\ s.t. \ \vd_{\beta_i}=0}}
\SW(\beta_1)\cdots\SW(\beta_N)
\sum_{m_i\geq  0} q^{\sum m_i}\int_{[X^{[\underline{m}]}]} (**),
\]
where $(**)$ is the pull back of $(*)$ under the embedding with respect to the point representatives. Denote the contributions from the Hilbert schemes of points with respect to the decomposition $\beta=\sum \beta_i$ as follows:
\begin{equation} \label{eqnlocal}
\widehat{Z}_{X,N,\, \underline{\beta}}(q\,|\, \underline{w})
:=\sum_{m_i\geq 0} q^{\sum m_i} \int_{X^{[\underline{m}]}}
\frac{e}{\XX_{-y}}\left(\sum_i \Obs_i\right)\  
\frac{\XX_{-y}}{e}\left(N^\vir_F\right)\ 
\XX_{-y}\left(\sum_i T_{X^{[m_i]}}\right),
\end{equation}
where 
\begin{equation*}
\begin{aligned}
\Obs_i&=R\HHom_\pi(\OO_{\ZZ_i}, L_i),\\
N_F^\vir&=\sum_{i\neq j} R\HHom_\pi(I_{\ZZ_i}\otimes L_i^{-1}, \OO-I_{\ZZ_j}\otimes  L_j^{-1})[w_j-w_i]
\end{aligned}
\end{equation*}
for some choice of line bundles $L_i$ with $c_1(L_i)=\beta_i$. Note that $\widehat{Z}_{X,N,\,\underline{\beta}}(q\,|\, \underline{w})$ only depends on the cohomology classes $\beta_i$, not on the choice of line bundles $L_i$, hence the notation.

We now establish the multiplicative property of $\widehat{Z}_{X,N,\, \underline{\beta}}(q\,|\, \underline{w})$. This essentially follows from the argument in \cite{EGL}. Define
\[\mathcal{K}_N:=\{(X,\beta_1, \dots, \beta_N)\,|\, 
X \textnormal{ smooth projective surface}, \ \beta_1, \dots, \beta_N\in H^2(X)\}
\]
and let 
\[ \widehat{Z}_{*,N,*}(q\,|\,\underline{w}): \mathcal{K}_N\rightarrow 1+q\,\Q(y,e^{w_1}, \dots, e^{w_N})[[q]]
\]
be the obvious map. This map is well defined even if there are no line bundle representatives for the $\beta_i$, by using the Grothendieck-Riemann-Roch formula. By the generalized version of the recursive argument explained in the proof of Theorem \ref{tuniv}, this map factors through $\gamma_N:\mathcal{K}_N\rightarrow \Q^{t_N}$
\begin{align*}
(X,\beta_1, \dots, \beta_N)\mapsto
\left(K_X^2,\, \chi(\OO_X),\, \beta_i.K_X,\, \beta_i(\beta_i-K_X),\, \beta_i.\beta_j\right)
\end{align*}
followed by 
\[h:\Q^{t_N}\rightarrow 1+q\,\Q(y,e^{w_1}, \dots, e^{w_N})[[q]].
\]
Here $t_N$ is the number of the relevant Chern numbers given by
\[t_N=1+1+N+N+{N\choose 2}.
\]
To conclude the multiplicativity, consider a disjoint union of surfaces
\[(X\sqcup Y, \alpha_1\sqcup \beta_1, \dots, \alpha_N\sqcup \beta_N).
\] 
Since $e$ and $\mathcal{X}_{-y}$ are multiplicative characteristic classes, it follows that 
\[\widehat{Z}_{X\sqcup Y, N,\, \underline{\alpha}\,\sqcup \underline{\beta}}(q\,|\,\underline{w})=
\widehat{Z}_{X,N,\,\underline{\alpha}}(q\,|\,\underline{w})\cdot 
\widehat{Z}_{Y,N,\,\underline{\beta}}(q\,|\,\underline{w}).
\]
By the argument of [EGL], we conclude that there exist universal series
\[\mathsf{U}_N, \quad \widetilde{\mathsf{U}}_N, \quad \mathsf{V}_{N,i},\quad \widetilde{\mathsf{V}}_{N,i} \quad \mathsf{W}_{N,i,j},
\]
in $1+q\cdot\Q(y)(e^{w_1}, \dots, e^{w_N})[[q]]$ such that 
\[\widehat{Z}_{X,N,\,\underline{\beta}}(q\,|\,\underline{w})=\mathsf{U}_N^{\, K_X^2}\ 
\widetilde{\mathsf{U}}_N^{\,\chi(\OO_X)}
\prod_{i=1}^N\mathsf{V}_{N,i}^{\, \beta_i.K_X}
\prod_{i=1}^N\widetilde{\mathsf{V}}_{N,i}^{\ \beta_i.(\beta_i-K_X)}\!\!\!
\prod_{1\leq i<j\leq N}\!\!\!\!\!\mathsf{W}_{N,i,j}^{\, \beta_i.\beta_j}.
\]

Since we have
\[\vd_{\beta_i}=\frac{\beta_i(\beta_i-K_X)}{2}=0 \ \ \text{for all} \ \ i
\]
in our setting, we obtain
\[\widehat{Z}_{X,N,\,\underline{\beta}}(q\,|\,\underline{w})=\mathsf{U}_N^{K_X^2}\ 
\widetilde{\mathsf{U}}_N^{\chi(\OO_X)}
\prod_{i=1}^N\mathsf{V}_{N,i}^{\, \beta_i.K_X}\!\!\!\!\!
\prod_{1\leq i<j\leq N}\!\!\!\!\!\mathsf{W}_{N,i,j}^{\, \beta_i.\beta_j}.
\]
In the following section, we prove that $\widetilde{\mathsf{U}}_N=1$. This will complete the proof of Theorem \ref{tmult}.

\section{Applications of the multiplicative structure} \label{sapp}
\noindent In this section, we use Theorem \ref{tmult} to find some of the universal series in the previous section. We do so by picking convenient geometries. Our choices will involve $K3$ surfaces and surfaces with smooth canonical curves. The formulas we obtain this way will imply Theorems \ref{t0}, \ref{tell}, \ref{tg}, and \ref{tbl}.

\subsection{Punctual quotients}\label{s0}
\noindent In this subsection, we set $\beta=0$, thus studying the invariants of punctual Quot schemes. Note that the zero curve class only admits a trivial effective decomposition by $\beta_i=0$ for all $i$. On the other hand, we have $\SW(0)=1$ because $\Hilb_X^{\beta=0}$ is a reduced point of virtual dimension 0. Since $\beta_i=0$ for all $i$, the weaker form of Theorem \ref{tmult} in Subsection \ref{smult} implies
\[Z_{X,N,\,0}(q\,|\,\underline{w})=\widehat{Z}_{X,N,\,(0,\cdots, 0)}(q\,|\, \underline{w})
=\mathsf{U}_N^{\, K_X^2}\ \widetilde{\mathsf{U}}_N^{\, \chi(\OO_X)}.
\]
In what follows, we use special geometries to determine $\mathsf{U}_N$ and $\widetilde{\mathsf{U}}_N$. 

 We prove that $\widetilde{\mathsf{U}}_N=1$. To this end, let $X$ be a $K3$ surface. Recall from (\ref{eqnlocal}) that
\[\widehat{Z}_{X,N,\,(0,\dots,0)}(q\,|\,\underline{w})=\!\!
\sum_{m_i\geq 0} q^{\sum m_i} \!
\int_{X^{[\underline{m}]}}
\frac{e}{\XX_{-y}}\left(\sum_i \Obs_i\!\right) 
\frac{\XX_{-y}}{e}\left(N^\vir_F\right)
\XX_{-y}\left(\sum_i T_{X^{[m_i]}}\right)\!,
\]
where
\begin{align*}
\Obs_i&=R\HHom_\pi(\OO_{\ZZ_i}, L_i),\\
N_F^\vir&=\sum_{i\neq j} R\HHom_\pi(I_{\ZZ_i}\otimes L_i^{-1}, \OO-I_{\ZZ_j}\otimes  L_j^{-1})[w_j-w_i]
\end{align*}
for some line bundles $L_i$ with $c_1(L_i)=0$. We take $L_i=\OO_X$ for all $i$. Then we have
\[\Obs_i=R\HHom_\pi(\OO_{\ZZ_i}, \OO)=\big(R\pi_*\OO_{\ZZ_i}(K_X)\big)^\vee=(K_X^{[m_i]})^\vee
\]
by Serre duality. Since $X$ is a $K3$ surface, this becomes $(\OO_X^{[m_i]})^\vee$. Its Euler class vanishes if $m_i>0$ due to a trivial summand. This proves that 
\[\widehat{Z}_{X, N,\,(0,\dots,0)}(q\,|\,\underline{w})=1.
\]
On the other hand, we have 
\[\big(\mathsf{U}_N\big)^0 \big(\widetilde{\mathsf{U}}_N\big)^2=\widehat{Z}_{X, N,\,(0,\dots,0)}(q\,|\,\underline{w})=1
\]
since $K_X^2=0$ and $\chi(\OO_X)=2$. This implies $\widetilde{\mathsf{U}}_N=1$ and completes the proof of Theorem \ref{tmult}.

 Next, we study $\mathsf{U}_N$ by using a surface $X$ with a smooth canonical curve $C\in |K_X|$. If $C$ is cut out by a section $s\in H^0(X,K_X)$, then the tautological section $s^{[m_i]}\in H^0(X^{[m_i]}, K_X^{[m_i]})$ vanishes on $C^{[m_i]}\subset X^{[m_i]}$. 
Therefore
\[e(\Obs_i)=e\big((K_X^{[m_i]})^\vee\big)=(-1)^{m_i}\, e\big(K_X^{[m_i]}\big)=(-1)^{m_i}[C^{[m_i]}].
\]
Denote the embedding of a product of Hilbert scheme of points on $C$ by
\[i_{\underline{m}}:C^{[m_1]}\times\cdots\times C^{[m_N]}\hookrightarrow
X^{[m_1]}\times\cdots\times X^{[m_N]}.
\]
Then $\widehat{Z}_{X, N,\,(0,\dots,0)}(q\,|\,\underline{w})$ becomes
\begin{equation}\label{eqn0}
\sum_{m_i\geq0}(-q)^{\sum m_i} \int_{C^{[\underline{m}]}}
\mathcal{X}_{-y}\left(i_{\underline{m}}^*\Big(\sum_i T_{X^{[m_i]}}-\Obs_i\Big)\right)
\frac{\mathcal{X}_{-y}}{e}\left(i_{\underline{m}}^* N^\vir_F\right).
\end{equation}

We compute the pull back of the $K$-theory classes in (\ref{eqn0}) in terms of the universal data of Hilbert scheme of points on $C$ and the normal bundle 
\[\Theta:=N_{C/X}=\OO_C(C).\]
Since $C$ is a canonical curve of $X$, $\Theta$ is a theta characteristic of $C$, that is, $\Theta^{\otimes 2}=\omega_C$. The usual normal bundle exact sequence associated to the tautological section $s^{[\underline{m}]}\in H^0(X^{[\underline{m}]},K_X^{[\underline{m}]})$ gives
\begin{align*}
i_{\underline{m}}^* T_{X^{[\underline{m}]}}&=T_{C^{[\underline{m}]}}+i_{\underline{m}}^* K_X^{[\underline{m}]}
=T_{C^{[\underline{m}]}}+\Theta^{[\underline{m}]}
\end{align*}
in $K$-theory and so
\begin{equation}\label{pullbackT}
i_{\underline{m}}^*\Big(\sum_i T_{X^{[m_i]}}-\Obs_i\Big)=T_{C^{[\underline{m}]}}+\Theta^{[\underline{m}]}-(\Theta^{[\underline{m}]})^\vee.
\end{equation}
This recovers a formula from \cite{OP}. Now we compute the pull back of the virtual normal bundle. For this calculation, we change the notation $R\HHom_\pi$ to $\Ext_X^\bullet$ to indicate it is the relative pushforward along the projection with fiber $X$. Similarly, the relative pushforward along the projection with fiber $C$ will be denoted $\Ext^\bullet_C$. With these notations, 
\begin{align}\label{pullbackN}
i_{\underline{m}}^*N_F^\vir&=\sum_{i\neq j} \Ext_X^\bullet(I_{\ZZ_i}, \OO_{\ZZ_j})[w_j-w_i]\nonumber\\
&=\sum_{i\neq j} \Big(\Ext_X^\bullet(\OO, \OO_{\ZZ_j})-\Ext_X^\bullet(\OO_{\ZZ_i}, \OO_{\ZZ_j})\Big)[w_j-w_i]\nonumber\\
&=\sum_{i\neq j} \Big( \Ext_C^\bullet(\OO, \OO_{\ZZ_j})-\Ext_C^\bullet(\OO_{\ZZ_i}, \OO_{\ZZ_j})+\Ext_C^\bullet(\OO_{\ZZ_i}, \OO_{\ZZ_j}\otimes \Theta)\Big)[w_j-w_i]\nonumber\\
&=\sum_{i\neq j} \Big( \Ext_C^\bullet(I_{\ZZ_i/C}, \OO_{\ZZ_j})+\Ext_C^\bullet(\OO_{\ZZ_i}, \OO_{\ZZ_j}\otimes \Theta)\Big)[w_j-w_i],
\end{align}
where $\ZZ_i$ is the universal zero dimensional subscheme, scheme theoretically supported on $C$. The third equality above follows from \cite[Lemma 3.42]{T1}. Note that both of these pull back of $K$-theory classes are intrinsic to the curve $C$ up to the choice of a theta characteristic. This  gives another way to explain that $\widetilde{\mathsf{U}}_N=1$ since the Chern number of $C$ is determined by $g(C)-1=K_X^2$ which is independent of $\chi(\OO_X)$. 

Now we further restrict the surface geometry to compute the integral (\ref{eqn0}). Assume that $X$ has a smooth canonical curve $C$ of genus 0. For example, the blow up of a $K3$ surface at a point will satisfy this. When $C=\PP^1$, we can identify
\[C^{[m_1]}\times \cdots \times C^{[m_N]}=\PP^{m_1}\times\cdots\times\PP^{m_N}=:\PP^{\underline{{m}}}.
\]
With this identification, the universal exact sequence is given by
\[0\rightarrow \sum_i\OO_{\PP^{m_i}}(-1)\boxtimes \OO_{\PP^1}(-m_i)
\rightarrow \sum_i \OO_{\PP^{\underline{m}}\times\PP^1}\rightarrow \sum_i\OO_{\ZZ_i}\rightarrow 0.
\]
This, in particular, implies the formula for the tautological bundle over $\PP^m$:
\begin{equation}\label{taut. bundle}\OO_{\PP^1}(d)^{[m]}=\OO_{\PP^m}(-1)^{\oplus (m-d-1)}+\OO_{\PP^m}^{\oplus (d+1)}.
\end{equation}
Using this formula, the $K$-theory classes of \eqref{pullbackT} and \eqref{pullbackN} are computed in the proof of \cite[Theorem 23]{OP}:
\begin{align*}
i_{\underline{m}}^*\Big(\sum_i T_{X^{[m_i]}}-\Obs_i\Big)&=\sum_i\Big(\OO(-h_i)^{\oplus m_i}+\OO(h_i)-\OO\Big),\\
i_{\underline{m}}^*N_F^\vir&=\sum_{i\neq j} \Big(\OO(-h_j)^{\oplus m_j}+\OO(h_i)-\OO(h_i-h_j)\Big)[w_j-w_i],
\end{align*}
where $h_i$ is the hyperplane section of $\PP^{m_i}$. We explain how \cite{OP} obtains these formulas by showing the part of the computation. For example, we compute the part of the terms in \eqref{pullbackN}
\begin{align*}
\Ext_{\PP^1}^\bullet(I_{\ZZ_i/\PP^1}, \OO_{\ZZ_j})
&=\Ext_{\PP^1}^\bullet\Big(\OO_{\PP^{\underline{m}}}(-h_i)\boxtimes \OO_{\PP^1}(-m_i), 
\OO_{\PP^{\underline{m}}\times\PP^1}-\OO_{\PP^{\underline{m}}}(-h_j)\boxtimes \OO_{\PP^1}(-m_j)\Big)\\
&=\OO_{\PP^{\underline{m}}}(h_i)\otimes H^\bullet(\OO_{\PP^1}(m_i))-\OO_{\PP^{\underline{m}}}(h_i-h_j)\otimes H^\bullet(\OO_{\PP^1})\\
&=\OO(h_i)^{\oplus (m_i+1)}-\OO(h_i-h_j)
\end{align*}
by the identification of the universal sequence and Riemann-Roch calculations. 

Recall from Subsection \ref{sssss} that for any line bundle $L$ with $c_1(L)=\alpha$,
\[\XX_{-y}(L)=\frac{\alpha(1-ye^{-\alpha})}{1-e^{-\alpha}}.
\]
For simplicity, we denote
\[\XX_{-y}(\alpha):=\frac{\alpha(1-ye^{-\alpha})}{1-e^{-\alpha}}.
\]
One can check that $\XX_{-y}(0)=1-y$. Substituting the computations of the $K$-theory classes into (\ref{eqn0}), $\mathsf{U}_N^{\, -1}=\mathsf{U}_N^{\, K_X^2}$ becomes
\begin{multline*}
\sum_{m_i\geq 0} (-q)^{\sum m_i}
\int_{\PP^{m_1}\times \cdots \times \PP^{m_N}} 
\prod_i \Big(\mathcal{X}_{-y}(-h_i)^{m_i} \cdot \mathcal{X}_{-y}(h_i) \cdot \frac{1}{1-y}\Big)\\
\times \prod_{i\neq j} \Big(
\frac{\mathcal{X}_{-y}(-h_i+w_i-w_j)^{m_i}}{(-h_i+w_i-w_j)^{m_i}} \cdot
\frac{\mathcal{X}_{-y}(h_i-w_i+w_j)}{(h_i-w_i+w_j)} \cdot \frac{(h_i-h_j+w_j-w_i)}{\mathcal{X}_{-y}(h_i-h_j+w_j-w_i)} \Big).
\end{multline*}
Integrating over the products of projective spaces,  the above expression can be rewritten as
\[\frac{1}{(1-y)^N} \sum_{m_i\geq 0} x_1^{m_1} \cdots x_N^{m_N} [h_1^{m_1} \cdots h_N^{m_N}]\ \Phi_1^{m_1} \cdots \Phi_N^{m_N}\cdot \Psi,
\]
where $-q=x_1=\cdots =x_N$ and bracket extracts the coefficient of the specified monomial. Here 
\begin{align*}
\Phi_i&=\mathcal{X}_{-y}(-h_i)\cdot \prod_{j\neq i} \frac{\mathcal{X}_{-y}(-h_i+w_i-w_j)}{(-h_i+w_i-w_j)},\\
\Psi&=\prod_i \mathcal{X}_{-y}(h_i) \cdot \prod_{i\neq j} \Big(\frac{\mathcal{X}_{-y}(h_i-w_i+w_j)}{(h_i-w_i+w_j)} \cdot \frac{(h_i-h_j+w_j-w_i)}{\mathcal{X}_{-y}(h_i-h_j+w_j-w_i)}\Big).
\end{align*}

Following the key idea of \cite{OP}, we apply the multivariate Lagrange-B\"{u}rmann formula of \cite{WW}. This yields
\[\frac{1}{(1-y)^N} \sum_{m_i\geq 0} x_1^{m_1} \cdots x_N^{m_N} [h_1^{m_1} \cdots h_N^{m_N}]\ \Phi_1^{m_1} \cdots \Phi_N^{m_N}\cdot \Psi
=\frac{1}{(1-y)^N} \cdot \frac{\Psi}{K}(h_1, \dots, h_N)\]
for the change of variables
\[ q=-x_i=\frac{-h_i}{\Phi_i}=\prod_{k=1}^N \frac{-h_i+w_i-w_k}{\XX_{-y}(-h_i+w_i-w_k)}\quad s.t.\quad  h_i(q=0)=0\]
and for 
\[K=\prod_{i} \Big(1-h_i \cdot \frac{\frac{d}{dh_i}\Phi_i}{ \Phi_i}\Big).\]
Note that the change of variable formulas for each $h_1, \dots, h_N$ are given by different equations due to the presence of $w_i$ in each formula. We can however work with a single equation after the additional change of variables
\[h_i:=H_i+w_i-w_1
\]
also used in \cite{OP}. Now, $H_1, \dots, H_N$ are the solutions of a single equation
\begin{equation}\label{eqncov}
q=\prod_{k=1}^N \frac{-H+w_1-w_k}{\XX_{-y}(-H+w_1-w_k)}=
\prod_{k=1}^N\frac{1-e^{H-w_1+w_k}}{1-ye^{H-w_1+w_k}}
\quad s.t. \quad H_i(q=0)=w_1-w_i.
\end{equation}
Straightforward calculation shows that, after the change of variables above, 
\[\frac{1}{(1-y)^N} \cdot \frac{\Psi}{K}(H_1,H_2+w_2-w_1, \dots, H_N+w_N-w_1)
\]
becomes
\begin{multline*}
\frac{(-1)^N}{(1-y)^{2N}}
\prod\limits_{i, j}\frac{1-ye^{-H_i+w_1-w_j}}{1-e^{-H_i+w_1-w_j}} \cdot
\prod\limits_{i\neq j} \frac{1-e^{-H_i+H_j}}{1-ye^{-H_i+H_j}}\\
\times\prod\limits_i\left(\sum\limits_j \frac{e^{H_i-w_1+w_j}}{(1-ye^{H_i-w_1+w_j})(1-e^{H_i-w_1+w_j})}\right)^{-1}.
\end{multline*}

This is a rational function in the variables $e^{H_1}, \dots, e^{H_N}$, $y$, $e^{w_1}, \dots, e^{w_N}$ which is symmetric in $H_1, \dots, H_N$. Since the elementary symmetric functions of $e^{H_1}, \dots, e^{H_N}$ are rational in the variables $q, y, e^{w_1}, \dots, e^{w_N}$ by (\ref{eqncov}), we have shown that 
\[\mathsf{U}_N\in \Q(y, e^{w_1}, \dots, e^{w_N})(q).
\]
Furthermore, it is clear that $\mathsf{U}_N$ does not have poles at $e^{w_1}=\cdots =e^{w_N}=1$ or equivalently at $w_1=\cdots=w_N=0$. 

We specialize at $w_1=\cdots=w_N=0$ to obtain non equivariant answer for $Z_{X,N}(q)$. From the above formula, $\mathsf{U}_N^{-1}\Big\vert_{w_1=\cdots=w_N=0}$ is
\[\frac{(-1)^N}{(1-y)^{2N}\, N^N}
\prod\limits_{i}\left(\frac{1-ye^{-H_i}}{1-e^{-H_i}}\right)^N
\prod\limits_{i\neq j} \frac{1-e^{-H_i+H_j}}{1-ye^{-H_i+H_j}}\
\prod\limits_i\left(\frac{e^{H_i}}{(1-ye^{H_i})(1-e^{H_i})}\right)^{-1}\!\!,
\]
where $e^{H_1}, \dots, e^{H_N}$ are the distinct roots of the polynomial equation 
\[q=\left(\frac{1-e^H}{1-ye^H}\right)^N\quad \iff\quad (1-e^H)^N-q(1-ye^H)^N=0\ ,
\]
considering $e^H$ as a variable. To simplify further, we use another change of variables
\[t_i=\frac{1-e^{H_i}}{1-ye^{H_i}}, \ \ \ i=1, \dots, N.\]
One can check that $\overline{\mathsf{U}}_N:=\mathsf{U}_N\Big\vert_{w_1=\cdots=w_N=0}$ becomes
\[(-1)^N \, N^N
\prod_i\left(\frac{-t_i}{1-(1+y)t_i}\right)^N 
\prod_i\frac{(1-t_i)(1-yt_i)}{t_i}\,
\prod_{i\neq j} \frac{1-(1+y)t_i+yt_it_j}{t_j-t_i},
\]
where $t_1, \dots, t_N$ are the distinct roots of the equation
\[q=t^N.\]
Let $f(t)$ be a polynomial whose roots are $t_1, \dots, t_N$, that is,
\[f(t)=\prod_i (t-t_i)=t^N-q.\]
We can easily compute the symmetric expressions appearing in $\overline{\mathsf{U}}_N$ in terms of $f(t)$. 
\begin{align*}
(\textnormal{\romannumeral 1})\ &\prod_i\frac{-t_i}{1-(1+y)t_i}=\frac{f(0)}{(1+y)^N\, f((1+y)^{-1})}=\frac{-q}{1-(1+y)^Nq}\\
(\textnormal{\romannumeral 2})\ &\prod_i\frac{(1-t_i)(1-yt_i)}{t_i}=\frac{f(1)\, \big(y^N\, f(y^{-1})\big)}{(-1)^N\, f(0)}=\frac{(1-q)(1-y^Nq)}{(-1)^N(-q)}\\
(\textnormal{\romannumeral 3})\ &\prod_{i\neq j}(t_i-t_j)=\prod_{i} f'(t_i)=\prod_iNt_i^{N-1}=\frac{(Nq)^N}{(-1)^N\, f(0)}=N^N\, (-q)^{N-1}
\end{align*}
Substituting these, we obtain
\[\overline{\mathsf{U}}_N=\frac{(1-q)(1-y^Nq)}{\big(1-(1+y)^Nq\big)^N} \cdot \prod_{i\neq j}\big(1-(1+y)t_i+yt_it_j\big)\]
as required. At $y=1$, this recovers the formula proven in \cite{OP}. The simple functional equation (\ref{functional}) follows easily from here. 

\subsection{Relatively minimal elliptic surfaces}\label{sell}
\noindent In this subsection, we study the Quot scheme invariants of relatively minimal elliptic surfaces. We will derive Theorem \ref{tell} from Theorem \ref{tmult}.

Let $p:X\rightarrow C$ be a relatively minimal elliptic surface over a smooth projective curve. Denote the generic fiber by $F$ and multiple fibers by $F_1, \dots, F_r$ with multiplicity $m_1, \dots, m_r$. Since $p$ is relatively minimal, $K_X$ is a rational multiple of $F$ by the canonical bundle formula
\[K_X=p^*(\omega_C+\mathbb{D})+\sum_j (m_j-1)F_j,
\]
where $\mathbb{D}$ is a divisor of degree $\chi(\OO_X)$. 

Let $\beta$ be a curve class supported on the fibers, that is, $\beta.F=0$. Let $\beta=\sum_{i=1}^N \beta_i$ be an effective decomposition. Since the fiber class $F$ is nef, we have $\beta_i.F\geq 0$ for all $i$. This concludes $\beta_i.F=0$ hence $\beta_i.K_X=0$ for all $i$. By Zariski's Lemma \cite{BHPV},
\[\vd_{\beta_i}=\frac{\beta_i(\beta_i-K_X)}{2}=\frac{\beta_i^2}{2}\leq 0
\]
with equality if and only if $\beta_i$ is a rational multiple of $F$. Therefore $\beta$ is a Seiberg-Witten class of arbitrary length $N$. By Theorem \ref{tmult}, we have
\[Z_{X,N,\,\beta}(q\,|\,\underline{w})=q^{-\beta.K_X} \!\!\!\!\!\!\!\!\!\!\!\!\!\!\!\!\!\!
\sum_{\substack{\beta=\sum \beta_i, \\ \beta_i \textnormal{ rational multiple of F}}}\!\!\!\!\!\!\!\!\!\!\!\!\!\!\!
\SW(\beta_1)\cdots \SW(\beta_N) \cdot \widehat{Z}_{X, N,\,\underline{\beta}}(q\,|\,\underline{w}),
\]
where
\[\widehat{Z}_{X, N,\,\underline{\beta}}(q\,|\,\underline{w})=\mathsf{U}_N^{K_X^2}\ 
\prod_{i=1}^N\mathsf{V}_{N,i}^{\, \beta_i.K_X}\!\!\!\!\!
\prod_{1\leq i<j\leq N}\!\!\!\!\!\mathsf{W}_{N,i,j}^{\, \beta_i.\beta_j}.
\]
On the other hand, since $K_X$ and all classes $\beta_i$ are rational multiples of the fiber, we have 
\[\beta.K_X\,=\,K_X^2\,=\,\beta_i.K_X\,=\,\beta_i.\beta_j\,=\,0.
\]
Therefore 
$\widehat{Z}_{X, N,\,\underline{\beta}}(q\,|\,\underline{w})=1$
and so
\[Z_{X,N,\,\beta}(q)=\!\!\!\!\!\!\!\!\!\!\!\!\!\!\!\!\!\!
\sum_{\substack{\beta=\sum \beta_i, \\ \beta_i \textnormal{ rational multiple of F}}}\!\!\!\!\!\!\!\!\!\!\!\!\!\!\!
\SW(\beta_1)\cdots \SW(\beta_N).\]
In particular, if $\beta$ itself is not a rational multiple of $F$, then invariants all vanish. This completes the proof. 
\subsection{Minimal surfaces of general type with $p_g>0$}\label{sgl}
\noindent In this subsection, we study the Quot scheme invariants of minimal surfaces of general type with $p_g>0$. All such surfaces $X$ are of Seiberg-Witten simple type by  \cite[Proposition 4.20]{DKO} with only two basic classes, namely $0$ and $K_X$. Furthermore, it is shown in \cite{CK} that $\SW(K_X)=(-1)^{\chi(\OO_X)}$. We compute the Quot scheme invariants taking advantage of the complete understanding of the Seiberg-Witten theory of $X$. 

Let $X$ be a minimal surface of general type with $p_g>0$ and $\beta$ be a curve class. Since the only Seiberg-Witten basic classes of $X$ are $0$ and $K_X$, each effective decomposition $\beta=\sum \beta_i$ contributes to the Quot scheme invariants only if $\beta_i=0$ or $\beta_i=K_X$ for all $i$. In particular, the Quot scheme invariants vanish unless $\beta=\ell K_X$ for some $0\leq \ell \leq N$. Assume that $\beta=\ell K_X$ for some $0\leq\ell\leq N$. Note that effective decompositions $\beta=\sum \beta_i$ by basic classes $0$ and $K_X$ are enumerated by partitions $I\sqcup J$ of the set $[N]=:\{1, \dots, N\}$ with $|I|=\ell$. Denote the vector of the effective decomposition corresponding to the partition $I\sqcup J$ as
\[\underline{K}_{I\sqcup J}:=(\beta_1, \dots, \beta_N) ,\quad \textnormal{where} \quad \beta_i=
    \begin{cases}
      K_X, & \text{if}\ i\in I\\
      \ \ 0\ \ , & \text{if}\ i\in J.
    \end{cases}
\]
By Theorem \ref{tmult},
\begin{align*} Z_{X,N,\,\ell K_X}(q\,|\,\underline{w})&=q^{-\ell K_X^2}\cdot \SW(K_X)^{\ell}
\sum_{\substack{I\sqcup J=[N], \\ s.t. \ |I|=\ell}} 
\widehat{Z}_{X,N,\, \underline{K}_{I\sqcup J}}(q\,|\, \underline{w}),
\end{align*}
where
\begin{equation}\label{A}
\widehat{Z}_{X,N,\,\underline{K}_{I\sqcup J}}(q\,|\, \underline{w})\ =\ 
\mathsf{U}_N^{K_X^2}\ 
\prod_{i=1}^N\mathsf{V}_{N,i}^{\, \beta_i.K_X}\!\!\!\!\!\!
\prod_{1\leq i_1<i_2\leq N}\!\!\!\!\!\mathsf{W}_{N,i_1,i_2}^{\, \beta_{i_1}.\beta_{i_2}}\nonumber \ =\ 
\Bigg(\mathsf{U}_N\ \prod_{i\in I} \mathsf{V}_{N,i}
\prod_{\substack{i_1<i_2\\ i_1, i_2\in I}}\!\!\mathsf{W}_{N,i_1,i_2}\Bigg)^{K_X^2}.\nonumber
\end{equation}

Now we study the universal series 
\[\Bigg(\mathsf{U}_N\ \prod_{i\in I} \mathsf{V}_{N,i}
\prod_{\substack{i_1<i_2\\ i_1, i_2\in I}}\!\!\mathsf{W}_{N,i_1,i_2}\Bigg)\]
by using special geometries. Assume that $X$ is a minimal general type surface with a smooth canonical divisor $C\in|K_X|$. In this section, we make the convention that indices in $I$, $J$, and $[N]$ are denoted by $i, j, $ and $k$, respectively. Recall from (\ref{eqnlocal}) that
\[\widehat{Z}_{X,N,\,\underline{K}_{I\sqcup J}}(q\,|\, \underline{w})
=\!\!\sum_{m_k\geq 0} q^{\sum m_k}\!\! \int_{X^{[\underline{m}]}}
\frac{e}{\XX_{-y}}\!\left(\sum_{k=1}^N \Obs_k\!\right)
\frac{\XX_{-y}}{e}\!\left(N^\vir_F\right)
\XX_{-y}\!\left(\sum_{k=1}^N T_{X^{[m_k]}}\!\right)\!,
\]
where 
\begin{align*}
\Obs_i&=R\HHom_\pi(\OO_{\ZZ_i}, K_X)=(\OO_X^{[m_i]})^\vee,\\
\Obs_j&=R\HHom_\pi(\OO_{\ZZ_j}, \OO_X)=(K_X^{[m_j]})^\vee
\end{align*}
by taking $L_i=K_X$ and $L_j=\OO_X$. 
If $m_i>0$ for some $i\in I$, then the integral vanishes because $(\OO_X^{[m_i]})^\vee$ has a trivial summand. Therefore, it suffices to consider the cases with $m_i=0$ for all $i\in I$. Now $\widehat{Z}_{X,N,\, \underline{K}_{I\sqcup J}}(q\,|\, \underline{w})$ becomes
\[\sum_{m_j\geq 0} q^{\sum m_j}\!\! \int_{\prod\limits_{j\in J} X^{[m_j]}}
\frac{e}{\XX_{-y}}\left(\sum_j \Obs_j\right)
\frac{\XX_{-y}}{e}\left(N^\vir_F\right)
\XX_{-y}\left(\sum_j T_{X^{[m_j]}}\right)
\]
with 
\begin{align*}
N^\vir_F&=\sum_{i_1\neq i_2} R\HHom_\pi(\OO_X(-C), \OO_C)[w_{i_2}-w_{i_1}]\\
&+\sum_{j_1\neq j_2} R\HHom_\pi(I_{\ZZ_{j_1}}, \OO_{\ZZ_{j_2}})[w_{j_2}-w_{j_1}]\\
&+\,\sum_{i, j} \Big(R\HHom_\pi(\OO_X(-C), \OO_{\ZZ_{j}})[w_j-w_i]+
R\HHom_\pi(I_{\ZZ_j}, \OO_C)[w_i-w_j]\Big).
\end{align*}
Following the proof of Theorem \ref{t0}, we have
\[e(\Obs_j)=(-1)^{m_j}[C^{[m_j]}]
\]
and denote the embedding by
\[i_{\underline{m}}:\prod_j C^{[m_j]}\hookrightarrow \prod_j X^{[m_j]}.
\]
Then $\widehat{Z}_{X,N,\, \underline{K}_{I\sqcup J}}(q\,|\, \underline{w})$ becomes
\begin{equation}\label{eqncurve}
\sum_{m_j\geq0}(-q)^{\sum m_j} \int_{\prod\limits_{j\in J} C^{[m_j]}}
\mathcal{X}_{-y}\left(i_{\underline{m}}^*\Big(\sum_j T_{X^{[m_j]}}-\Obs_j\Big)\right)
\frac{\mathcal{X}_{-y}}{e}\left(i_{\underline{m}}^* N^\vir_F\right).
\end{equation}
Note that the computation in (\ref{pullbackT}) directly gives
\[i_{\underline{m}}^*\Big(\sum_j T_{X^{[m_j]}}-\Obs_j\Big)=\sum_jT_{C^{[m_j]}}+\Theta^{[m_j]}-(\Theta^{[m_j]})^\vee.
\]
Furthermore, the computation in (\ref{pullbackN}) also applies to the components of $i_{\underline{m}}^*N^\vir_F$ corresponding to the summation over $j_1\neq j_2$. We calculate the remaining terms by using the same method and notation as before
\begin{align*}
\Ext_X^\bullet(\OO_X(-C),\OO_C)&=H^\bullet(\OO_C(C))=H^\bullet(\Theta)=0,\\
\Ext_X^\bullet(\OO_X(-C), \OO_{\ZZ_j})&=\Theta^{[m_j]},\\
\Ext_X^\bullet(I_{\ZZ_j}, \OO_C)&=\Ext_C^\bullet(I_{\ZZ_j/C}, \OO_C)-(\Theta^{[m_j]})^\vee.
\end{align*}
Therefore, 
\begin{align*}
i_{\underline{m}}^*N^\vir_F&=\sum_{j_1\neq j_2} \Big( \Ext_C^\bullet(I_{\ZZ_{j_1}/C}, \OO_{\ZZ_{j_2}})+\Ext_C^\bullet(\OO_{\ZZ_{j_1}}, \OO_{\ZZ_{j_2}}\otimes \Theta)\Big)[w_{j_2}-w_{j_1}]\\
&\ +\sum_{i, j} \bigg(\Theta^{[m_j]}[w_j-w_i]+\Big(\Ext_C^\bullet(I_{\ZZ_j/C}, \OO_C)-(\Theta^{[m_j]})^\vee\Big)[w_i-w_j]\bigg).
\end{align*}
These expressions are intrinsic to the curve $C$ up to the choice of theta characteristic. Therefore we can think of (\ref{eqncurve}) as a generating series of tautological integrals over symmetric products of $C$. By modifying the recursive argument of \cite{EGL} to the symmetric products of a curve, the generating series only depends on $g(C)-1=K_X^2$. We already know from Theorem \ref{tmult} that this dependence is multiplicative. Therefore, we may assume $C=\PP^1$ and repeat the computation in Subsection \ref{s0}. Even though a minimal surface of general type will never have a rational curve as a smooth canonical divisor, we are still able to reduce the computation of (\ref{eqncurve}) to $C=\PP^1$ since the integral is now intrinsic to the curve $C$. 

From here, we closely follow the lines in the proof of Theorem \ref{t0}. Assume that $C=\PP^1$. By the computation in the proof of \cite[Theorem 23]{OP}, we have the following equalities 
\begin{align*}
i_{\underline{m}}^*\Big(\sum_j T_{X^{[m_j]}}-\Obs_j\Big)&=\sum_j\Big(\OO(-h_j)^{\oplus m_j}+\OO(h_j)-\OO\Big),\\
i_{\underline{m}}^*N_F^\vir&=\sum_{j_1\neq j_2} \Big(\OO(-h_{j_2})^{\oplus m_{j_2}}+\OO(h_{j_1})-\OO(h_{j_1}-h_{j_2})\Big)[w_{j_2}-w_{j_1}]\\
&\ +\sum_{i, j}\Big(\OO(-h_j)^{\oplus m_j}[w_j-w_i]+\OO(h_j)[w_i-w_j]\Big)
\end{align*}
in the $K$-theory of $\prod_j\PP^{m_j}$, where $h_j$ is the hyperplane section of $\PP^{m_j}$. Substituting this into (\ref{eqncurve}), we obtain that
\[\Bigg(\mathsf{U}_N\ \prod_{i\in I} \mathsf{V}_{N,i}
\prod_{\substack{i_1<i_2\\ i_1, i_2\in I}}\!\!\mathsf{W}_{N,i_1,i_2}\Bigg)^{-1}\]
is equal to 
{
\begin{align*}
&\sum_{m_j\geq 0} (-q)^{\sum m_j} \int_{\prod\limits_j \PP^{m_j}}
\prod\limits_j \left(
\mathcal{X}_{-y}(-h_j)^{m_j} \cdot \mathcal{X}_{-y}(h_j)\cdot \frac{1}{1-y}
\right)\\
&\times\!\! \prod\limits_{j_1\neq j_2}\!\!\!\left(
\frac{\!\mathcal{X}_{-y}(-h_{j_1\!}+w_{j_1\!}-w_{j_2})^{m_{j_1}}}{(-h_{j_1}+w_{j_1}-w_{j_2})^{m_{j_1}}}\!\cdot\!
\frac{\!\mathcal{X}_{-y}(h_{j_1\!}-w_{j_1\!}+w_{j_2})}{(h_{j_1}-w_{j_1}+w_{j_2})}\!\cdot\!
\frac{(h_{j_1}-h_{j_2}-w_{j_1}+w_{j_2})}{\!\mathcal{X}_{-y}(h_{j_1\!}-h_{j_2\!}-w_{j_1\!}+w_{j_2})}
\!\right)\\
&\times \prod\limits_{i, j} \left(
\frac{\mathcal{X}_{-y}(-h_j+w_j-w_i)^{m_j}}{(-h_j+w_j-w_i)^{m_j}}\cdot
\frac{\mathcal{X}_{-y}(h_j-w_j+w_i)}{(h_j-w_j+w_i)}
\right).
\end{align*}
After putting $-q=x_1=\cdots=x_N$ and $s=N-\ell$, this expression can be rewritten as
\begin{align*}
\frac{1}{(1-y)^{s}}\sum\limits_{m_j \geq 0} {\textstyle\prod_j x_j^{m_j}\ \big[\prod_j h_j^{m_j}\big] \prod_j \Phi_j^{m_j} \cdot \Psi_{I\sqcup J}},
\end{align*}
where
\begin{align*}
\Phi_j&=(-h_j) \cdot \prod_{k=1}^N \frac{\mathcal{X}_{-y}(-h_j+w_j-w_k)}{(-h_j+w_j-w_k)},\\
\Psi_{I\sqcup J}&=\prod_j h_j \cdot \prod_{k,j}\frac{\mathcal{X}_{-y}(h_j-w_j+w_k)}{(h_j-w_j+w_k)} \cdot \prod_{j_1 \neq j_2} \frac{(h_{j_1}-h_{j_2}-w_{j_1}+w_{j_2})}{\mathcal{X}_{-y}(h_{j_1}-h_{j_2}-w_{j_1}+w_{j_2})}.
\end{align*}
In the above product, we remind the reader the conventions that elements of $I$, $J$, $[N]$ are denoted by $i$, $j$, $k$ respectively. Using the multivariate Lagrange-B\"{u}rmann formula of \cite{WW}, we find
\[\frac{1}{(1-y)^s}\sum\limits_{m_j \geq 0} {\textstyle\prod_j x_j^{m_j}\ \big[\prod_j h_j^{m_j}\big] \prod_j \Phi_j^{m_j} \cdot \Psi_{I\sqcup J}}
=\frac{1}{(1-y)^s}\cdot \frac{\Psi_{I\sqcup J}}{K_{I\sqcup J}}
\]
for the change of variables
\[q=-x_j=\frac{-h_j}{\Phi_j}=\prod_{k=1}^N \frac{(-h_j+w_j-w_k)}{\mathcal{X}_{-y}(-h_j+w_j-w_k)}
\]
and for
\[K_{I\sqcup J}=\prod_j \left(1-h_j \cdot \frac{\frac{d}{dh_j}\Phi_j}{\Phi_j}\right)
.\]
Since 
\begin{equation}\label{eqngl}
\Bigg(\mathsf{U}_N\ \prod_{i\in I} \mathsf{V}_{N,i}
\prod_{\substack{i_1<i_2\\ i_1, i_2\in I}}\!\!\mathsf{W}_{N,i_1,i_2}\Bigg)^{-1}
=\frac{1}{(1-y)^s}\cdot \frac{\Psi_{I\sqcup J}}{K_{I\sqcup J}}
\end{equation}
only involves variables $h_j$ for $j\in J$, we cannot apply the argument in the proof of Theorem \ref{t0} directly. However, after adding up all such terms corresponding to the partitions $I\sqcup J=[N]$, the same argument will give
\[\sum_{\substack{I\sqcup J=[N], \\ s.t. \ |I|=\ell}} 
\Bigg(\mathsf{U}_N\ \prod_{i\in I} \mathsf{V}_{N,i}
\prod_{\substack{i_1<i_2\\ i_1, i_2\in I}}\!\!\mathsf{W}_{N,i_1,i_2}\Bigg)^{K_X^2}
\in \Q(y, e^{w_1}, \dots, e^{w_N})(q)
\]
without poles at $e^{w_1}=\cdots=e^{w_N}=1$. 

To compute the non equivariant answer for $Z_{X,N,\, \ell K_X}(q)$, we can set up equivariant weights $w_1, \dots, w_N$ to be zero thus work symmetrically in the variables $h_1, \dots, h_N$ as in the proof of Theorem \ref{t0}. If we set equivariant weights to be zero, then
\begin{align*}
\Phi_j&= -h_j\, \left(\frac{1-ye^{h_j}}{1-e^{h_j}}\right)^N,\\
\Psi_{I\sqcup J}&= \prod_j h_j\cdot \prod_j \left(\frac{1-ye^{-h_j}}{1-e^{-h_j}}\right)^N \cdot \prod_{j_1\neq j_2} \frac{1-e^{-h_{j_1}+h_{j_2}}}{1-ye^{-h_{j_1}+h_{j_2}}},\\
K_{I\sqcup J}&=(-1)^s\, N^s\, (1-y)^s \prod_j h_j \cdot \prod_j \frac{e^{h_j}}{(1-e^{h_j})(1-ye^{h_j})}.
\end{align*}
Therefore, (\ref{eqngl}) becomes 
\[\frac{(-1)^s }{N^s (1-y)^{2s}}\,
\prod_j \frac{(1-e^{h_j})(1-ye^{h_j})}{e^{h_j}}\cdot \left(\frac{1-ye^{-h_j}}{1-e^{-h_j}}\right)^N \\
\prod_{j_1\neq j_2}\frac{1-e^{-h_{j_1}+h_{j_2}}}{1-ye^{-h_{j_1}+h_{j_2}}}.
\]
We use the additional change of variables
\[t_k=\frac{1-e^{h_k}}{1-ye^{h_k}}, \ \ \ k=1, \dots, N.\]
This yields the further simplification for the above expression:
\begin{align}\label{B}
\frac{(-1)^{s(N+1)}}{N^s \cdot q^s}
\prod_{j} \frac{t_j (1-(1+y)t_j)^N}{(1-t_j)(1-yt_j)} \times
\prod_{j_1\neq j_2} \frac{t_{j_2}-t_{j_1}}{1-(1+y)t_{j_1}+yt_{j_1}t_{j_2}}
=:\mathsf{A}_{J},
\end{align}
where $t_1, \dots, t_N$ are the distinct roots of the equation $q=t^N$. Note that $A_{\emptyset}=1$ as we take the usual convention that product over empty set is $1$. We have obtained
\begin{equation}\label{C}
\Bigg(\mathsf{U}_N\ \prod_{i\in I} \mathsf{V}_{N,i}
\prod_{\substack{i_1<i_2\\ i_1, i_2\in I}}\!\!\mathsf{W}_{N,i_1,i_2}\Bigg)^{-1}=\mathsf{A}_{J}.
\end{equation}
Therefore, we conclude that 
\begin{align*}
Z_{X,N,\,\ell K_X}(q)&=q^{-\ell K_X^2}\cdot \SW(K_X)^{\ell}
\sum_{\substack{I\sqcup J=[N], \\ s.t. \ |I|=\ell}} 
\left(\mathsf{A}_{J}\right)^{-K_X^2}.
\end{align*}
This completes the proof. 

\subsection{Blow up formula}\label{sbl}
\noindent In this subsection, we prove the blow up formula of the Quot scheme invariants for a curve class $\beta$ of Seiberg-Witten length $N$. The key input for the proof is Lemma \ref{lbl} below. The Lemma follows easily from the blow up formula \cite[Theorem 4.12]{DKO} of the Seiberg-Witten invariants. We denote the pull back of the curve class $\beta$ to the blow up surface by $\widetilde{\beta}$.

\begin{lemma}\label{lbl}
Let $X$ be a smooth projective surface and $\beta$ be a curve class of Seiberg-Witten length $N$. Let $\pi:\widetilde{X}\rightarrow X$ be the blow up of a point with the exceptional divisor $E$. Then $\widetilde{\beta}+\ell E$ is also of Seiberg-Witten length $N$. Furthermore, an effective decomposition
 \[\widetilde{\beta}+\ell E=\sum_{i=1}^N (\widetilde{\beta_i}+\ell_i E)\]
satisfies $\SW(\widetilde{\beta_i}+\ell_i E)\neq 0$ for all $i$ only if \,$\vd_{\beta_i}=0$ and $\ell_i=0$ or $1$ for all $i$. 
\end{lemma}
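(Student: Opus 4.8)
The plan is to derive Lemma~\ref{lbl} from two inputs: the blow-up formula for the Poincar\'e--Seiberg-Witten invariants, \cite[Theorem 4.12]{DKO}, together with a direct computation of virtual dimensions on $\widetilde{X}$. First I would record the elementary identity coming from $K_{\widetilde{X}}=\pi^*K_X+E$, $\pi^*\alpha\cdot E=0$, and $E^2=-1$: for any $\gamma\in H^2(X,\Z)$ and any $m\in\Z$,
\[
\vd_{\pi^*\gamma+mE}=\frac{(\pi^*\gamma+mE)(\pi^*\gamma+mE-K_{\widetilde{X}})}{2}=\vd_\gamma-\binom{m}{2}.
\]
Second, I would invoke the blow-up formula in the following qualitative form: $\SW_{\widetilde{X}}(\pi^*\gamma+mE)$ vanishes unless $m\in\{0,1\}$, and for $m\in\{0,1\}$ it equals $\pm\SW_X(\gamma)$. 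In particular $\SW_{\widetilde{X}}(\pi^*\gamma+mE)\neq 0$ forces $m\in\{0,1\}$ and $\SW_X(\gamma)\neq 0$. (In the language of $\mathrm{Spin}^c$ structures this is the statement that the $E$-coefficient of the relevant characteristic class equals $2m-1$, which a basic class forces to be $\pm1$.)

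Now let $\widetilde{\beta}+\ell E=\sum_{i=1}^N(\widetilde{\beta_i}+\ell_iE)$ be an effective decomposition with $\SW(\widetilde{\beta_i}+\ell_iE)\neq 0$ for all $i$; every effective summand has this shape because $H^2(\widetilde{X},\Z)=\pi^*H^2(X,\Z)\oplus\Z E$. By the blow-up formula each $\ell_i\in\{0,1\}$ and each $\SW_X(\beta_i)\neq 0$. Applying $\pi_*$ to the decomposition (using $\pi_*\pi^*\beta_i=\beta_i$ and $\pi_*E=0$) gives $\beta=\sum_{i=1}^N\beta_i$, and each $\beta_i$ is effective, being the class of the pushforward of an effective divisor on $\widetilde{X}$. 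Thus $\beta=\sum\beta_i$ is an effective decomposition of $\beta$ into $N$ classes with nonzero Seiberg-Witten invariant; since $\beta$ is of Seiberg-Witten length $N$, Definition~\ref{dSW} forces $\vd_{\beta_i}=0$ for all $i$. This is precisely the ``furthermore'' clause.

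Finally, combining the two steps: for each $i$ the summand satisfies $\vd_{\widetilde{\beta_i}+\ell_iE}=\vd_{\beta_i}-\binom{\ell_i}{2}=0$, since $\vd_{\beta_i}=0$ and $\ell_i\in\{0,1\}$ makes $\binom{\ell_i}{2}=0$. As the decomposition was arbitrary, $\widetilde{\beta}+\ell E$ is of Seiberg-Witten length $N$. I do not expect a genuine obstacle here; the only points requiring care are (a) applying \cite[Theorem 4.12]{DKO} with the correct normalization, so that the $E$-coefficient bookkeeping really pins down $\ell_i\in\{0,1\}$ and so that $\SW_{\widetilde{X}}(\widetilde{\beta_i}+E)$ is governed by $\SW_X(\beta_i)$ rather than by $\SW_X(K_X-\beta_i)$ (the two differ only by the sign $(-1)^{\chi(\OO_X)}$ and hence agree on vanishing), and (b) the observation that $\pi_*$ sends effective cycles to effective cycles, which is what makes the reduced decomposition $\beta=\sum\beta_i$ a genuine effective decomposition.
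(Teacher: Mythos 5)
Your overall skeleton is the same as the paper's (the blow-up formula of \cite[Theorem 4.12]{DKO}, the dimension identity $\vd_{\pi^*\gamma+mE}=\vd_\gamma-\binom{m}{2}$, and the Seiberg--Witten length $N$ hypothesis applied to the pushed-forward decomposition $\beta=\sum\beta_i$), but the way you obtain $\ell_i\in\{0,1\}$ has a genuine gap. You take as input the ``qualitative form'' that $\SW_{\widetilde X}(\pi^*\gamma+mE)$ vanishes unless $m\in\{0,1\}$. That is the classical basic-class/simple-type statement, valid when $p_g>0$; it is not what \cite[Theorem 4.12]{DKO} provides for an arbitrary algebraic surface, and it is false in the generality in which the lemma is stated. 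The Poincar\'e--Seiberg--Witten invariants used here are defined for all surfaces, and on $p_g=0$ surfaces they are routinely nonzero in positive virtual dimension; the blow-up formula only says $\SW_{\widetilde X}(\widetilde\gamma+mE)=\tau_{\leq 2\vd_{\widetilde\gamma+mE}}\bigl(\SW_X(\gamma)\bigr)$, a truncation, with no a priori restriction on $m$. For instance, for $X=\PP^2$ and $\gamma=H$ one has $\Hilb^H_X=\PP^2$, $\vd_H=2$, and $\SW(H)=\SW^2(H)=1\in H_0(\mathrm{pt})$, so the truncation formula gives $\SW_{\widetilde X}(\pi^*H+2E)\neq 0$ even though $m=2$ (here $\vd_{\pi^*H+2E}=1\geq 0$). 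So ``nonzero $\SW$ forces $\ell_i\in\{0,1\}$'' cannot be invoked unconditionally; in your write-up it is precisely the unproven step, and the parenthetical $\mathrm{Spin}^c$ remark about the $E$-coefficient being $\pm1$ only applies to basic classes in the $p_g>0$ setting.

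The gap is repairable, and the repair is exactly the paper's argument: use the truncation form of the blow-up formula only to conclude $\SW_X(\beta_i)\neq 0$ for every summand (a nonzero truncation forces a nonzero class); push forward to get the effective decomposition $\beta=\sum\beta_i$ and invoke Seiberg--Witten length $N$ to obtain $\vd_{\beta_i}=0$, as you do; and only then apply your dimension identity, which now reads $\vd_{\widetilde\beta_i+\ell_iE}=-\ell_i(\ell_i-1)/2\leq 0$, together with the fact that a class of negative virtual dimension has vanishing invariant for degree reasons (equivalently, the truncation $\tau_{\leq 2\vd}$ is then zero). This forces $\ell_i\in\{0,1\}$ and simultaneously shows that every contributing summand has $\vd_{\widetilde\beta_i+\ell_iE}=0$, i.e.\ $\widetilde\beta+\ell E$ is of Seiberg--Witten length $N$. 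In short, the order of deductions matters: $\ell_i\in\{0,1\}$ is an output of $\vd_{\beta_i}=0$, not an input from the blow-up formula.
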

\begin{proof}
Suppose $\beta$ be a curve class of Seiberg-Witten length $N$. Assume that 
 \[\widetilde{\beta}+\ell E=\sum_{i=1}^N (\widetilde{\beta_i}+\ell_i E)\]
 is an effective decomposition satisfying $\SW(\widetilde{\beta_i}+\ell_i E)\neq 0$ for all $i$. By the blow up formula \cite[Theorem 4.12]{DKO} of the Seiberg-Witten invariants, we have
 \[\SW(\widetilde{\beta_i}+\ell_i E)
 =\tau_{\leq 2\cdot\vd_{\widetilde{\beta_i}+\ell_i E}}\big(\SW(\beta_i)\big)
\]
after identifying 
\[ H_{*}\big(\Pic_{\widetilde{X}}^{\widetilde{\beta_i}+\ell_i E}\big)=\Lambda^{*} H^1\big(\widetilde{X},\Z\big)
=\Lambda^*H^1\big(X,\Z\big)=H_*\big(\Pic_X^{\beta_i}\big).
\]
In the formula, $\tau_{\leq n}$ denotes the truncation map up to homological degree $n$. Therefore $\SW(\widetilde{\beta_i}+\ell_i E)\neq 0$ implies $\SW(\beta_i)\neq 0$. Since $\beta=\sum \beta_i$ and $\SW(\beta_i)\neq 0$ for all $i$, we have $\vd_{\beta_i}=0$ for all $i$ by the assumption on $\beta$. On the other hand, 
\[\vd_{\widetilde{\beta_i}+\ell_i E}
=\frac{\left(\widetilde{\beta_i}+\ell_i E\right)\left(\widetilde{\beta_i}+\ell_i E-\widetilde{K_X}-E\right)}{2}=\vd_{\beta_i}-\frac{\ell_i(\ell_i-1)}{2}. 
\]
Since $\vd_{\beta_i}=0$, we have 
\[\vd_{\widetilde{\beta_i}+\ell_i E}=-\frac{\ell_i(\ell_i-1)}{2}\leq 0
\]
with equality if and only if $\ell_i=0$ or $1$. If $\vd_{\widetilde{\beta_i}+\ell_i E}<0$ then $\SW(\widetilde{\beta_i}+\ell_i E)$ is zero for degree reasons. Hence $\widetilde{\beta}+\ell E$ is of Seiberg-Witten length $N$. Furthermore, all integers $\ell_i$ are forced to be either $0$ or $1$.  This proves the second part of the statement. 
\end{proof}

Now we prove the blow up formula stated in Theorem \ref{tbl}. Let $X$ be a smooth projective surface and $\beta$ be a curve class Seiberg-Witten length $N$. Let \[\pi:\widetilde{X}\rightarrow X\] be the blow up of a point with the exceptional divisor $E$. Lemma \ref{lbl} implies that the Quot scheme invariants of $\widetilde{\beta}+\ell E$ vanish unless $0\leq \ell \leq N$. From now, we fix $\ell$ in this range. By Theorem \ref{tmult} and Lemma \ref{lbl}, $Z_{\widetilde{X}, N, \, \widetilde{\beta}+\ell E}(q\,|\,\underline{w})$ is equal to 
\begin{align*}
q^{-(\widetilde{\beta}+\ell E)(\widetilde{K_X}+E)}\!\!\!\!\!\!\!\!\!\!\!\!\!\!\!\!\!\!
\sum_{\substack{\beta=\sum \beta_i,\ \ell=\sum \ell_i\\ 
s.t. \ \vd_{\beta_i}=0,\ \ell_i=0 \textnormal{ or } 1}}\!\!\!\!\!\!\!\!\!\!\!\!\!\!\!\!\!\!
\SW(\widetilde{\beta_1}+\ell_1 E)\cdots\SW(\widetilde{\beta_N}+\ell_N E)\cdot
\widehat{Z}_{X, N,\,(\widetilde{\beta_i}+\ell_iE)_i}(q\,|\, \underline{w}).
\end{align*}
Using the fact that $\SW(\widetilde{\beta_i}+\ell_iE)=\SW(\beta_i)$ for $\ell_i=0$ or $1$, the above formula simplifies to
\begin{equation}\label{eqnbl}
q^\ell\cdot q^{-\beta.K_X}
\sum_{\substack{I\sqcup J=[N]\\s.t. \ |I|=\ell\ }}
\sum_{\substack{\beta=\sum \beta_i\\ \ s.t.\ \vd_{\beta_i}=0}}
\SW(\beta_1)\cdots\SW(\beta_N)\cdot
\widehat{Z}_{X, N,\,(\widetilde{\beta_i}+\delta_{i,I} E)_i}(q\,|\, \underline{w}),
\end{equation}
where $\delta_{i,I}$ is the indicator function of the set $I$. 
Recall from Theorem \ref{tmult} that
\[\widehat{Z}_{X, N,\,(\widetilde{\beta_i}+\delta_{i,I} E)_i}(q\,|\, \underline{w})
=\mathsf{U}_N^{\big(\widetilde{K_X}+E\big)^2}\!
\prod_{i=1}^N\mathsf{V}_{N,i}^{\, \big(\widetilde{\beta_i}+\delta_{i,I} E\big)\big(\widetilde{K_X}+E\big)}\!\!\!\!\!\!
\prod_{1\leq i<j\leq N}\!\!\!\!\!\mathsf{W}_{N,i,j}^{\, \big(\widetilde{\beta_i}+\delta_{i,I} E\big)\big(\widetilde{\beta_j}+\delta_{j,I} E\big)}\!\!.
\]
The natural decomposition of the intersection form on the blow up surface
\[H^2(\widetilde{X},\Z)=H^2(X,\Z)\oplus \Z[E]
\quad \textnormal{with} \quad H^2(X,\Z)\perp \Z[E],\ \ E^2=-1
\]
implies
\begin{align*}
\widehat{Z}_{X, N,\,(\widetilde{\beta_i}+\delta_{i,I} E)_i}(q\,|\, \underline{w})
=\mathsf{Bl}_{I\sqcup J} \cdot \widehat{Z}_{X,N,\,\underline{\beta}}(q\,|\, \underline{w}),
\end{align*}
where 
\[\mathsf{Bl}_{I\sqcup J}:=\Bigg(\mathsf{U}_N\ \prod_{i\in I} \mathsf{V}_{N,i}
\prod_{\substack{i_1<i_2\\ i_1, i_2\in I}}\!\!\mathsf{W}_{N,i_1,i_2}\Bigg)^{-1}.
\]
Therefore (\ref{eqnbl}) becomes
\begin{align*}
&q^\ell\cdot q^{-\beta.K_X}
\sum_{\substack{I\sqcup J=[N]\\s.t. \ |I|=\ell\ }}
\sum_{\substack{\beta=\sum \beta_i\\ \ s.t.\ \vd_{\beta_i}=0}}
\SW(\beta_1)\cdots\SW(\beta_N)\cdot
\mathsf{Bl}_{I\sqcup J} \cdot \widehat{Z}_{X,N,\,\underline{\beta}}(q\,|\, \underline{w})\\
&=\bigg(q^{\ell} \cdot \sum_{\substack{I\sqcup J=[N]\\s.t. \ |I|=\ell}}\mathsf{Bl}_{I\sqcup J}\bigg)
\cdot\  q^{-\beta.K_X} \!\!\!\!\!\!\!\sum_{\substack{\beta=\sum \beta_i\\s.t.\ \vd_{\beta_i}=0}}
\SW(\beta_1)\cdots\SW(\beta_N)\cdot \widehat{Z}_{X,N,\,\underline{\beta}}(q\,|\, \underline{w})\\
&=\bigg(q^{\ell} \cdot \sum_{\substack{I\sqcup J=[N]\\s.t. \ |I|=\ell}}\mathsf{Bl}_{I\sqcup J}\bigg)
\cdot Z_{X,N,\, \beta}(q\,|\,\underline{w}).
\end{align*}
Note that neither 
\[\sum_{\substack{I\sqcup J=[N]\\s.t. \ |I|=\ell}}\mathsf{Bl}_{I\sqcup J}\quad
\textnormal{nor}\quad Z_{X,N,\, \beta}(q\,|\,\underline{w})
\]
have poles at $w_1=\cdots=w_N=0$. Therefore, the generating series of the non equivariant Quot scheme invariants is simply 
\[Z_{\widetilde{X}, N, \, \widetilde{\beta}+\ell E}(q)=
\bigg(q^{\ell} \cdot \sum_{\substack{I\sqcup J=[N]\\s.t. \ |I|=\ell}}\mathsf{Bl}_{I\sqcup J}\bigg)\bigg\vert_{w_1=\cdots=w_N=0} \cdot Z_{X,N,\, \beta}(q)
\]
by taking $w_1=\cdots=w_N=0$ on each factor. 
Recall that 
\[\bigg(\sum_{\substack{I\sqcup J=[N]\\s.t. \ |I|=\ell}}\mathsf{Bl}_{I\sqcup J}\bigg)\bigg\vert_{w_1=\cdots=w_N=0}=\sum_{\substack{I\sqcup J=[N]\\s.t. \ |I|=\ell}} \mathsf{A}_{J}.
\]
was established in (\ref{C}). This completes the proof. 

\subsubsection{Computation of $\mathsf{Bl}_{N,N}$ and $\mathsf{Bl}_{N,N-1}$.}
We explicitly calculate $\mathsf{Bl}_{N,\ell}$ when $\ell=N$ or $\ell=N-1$. It is clear from the formula
\[\mathsf{Bl}_{N,\,\ell}=\sum\limits_{\substack{J\subseteq[N]\\ |J|=N-\ell}} \mathsf{A}_{J}
\] 
that $\mathsf{Bl}_{N,N}=1$ because $J=\emptyset$ and $s=0$. Assume that $\ell=N-1$ and $s=1$. Since $J$ is a singleton, formula (\ref{C}) becomes
\[\mathsf{Bl}_{N,N-1}=\frac{(-1)^{N+1}}{N \cdot q} \sum_{j=1}^N\frac{t_j (1-(1+y)t_j)^N}{(1-t_j)(1-yt_j)},
\]
where $t_1, \dots, t_N$ are distinct roots of $t^N=q$. Using the following fact about roots of unity
\[\sum_{j=1}^N t_j^p=
    \begin{cases}
      Nq^k, & \text{if}\ p=Nk \\
      \ \ 0\ \ , & \text{otherwise,}
    \end{cases}
\]
we obtain
\begin{align*}
\mathsf{Bl}_{N,N-1}
=\frac{(-1)^{N+1}}{N\cdot q} \sum_{k\geq 1} 
Nq^k\, [t^{Nk}]\,\frac{t(1-(1+y)t)^N}{(1-t)(1-yt)}.
\end{align*}
For convenience, let
\begin{equation}\label{D}
\sum_{i=0}^N a_i t^i=\big(1-(1+y)t\big)^N.
\end{equation}
Then
\begin{align*}
[t^{Nk}]\ \frac{t(1-(1+y)t)^N}{(1-t)(1-yt)}
&=[t^{Nk-1}]\ \left(\sum_{i=0}^Na_it^i\right) 
\left(\sum_{n\geq 0} \frac{1-y^{n+1}}{1-y}t^n\right)\\
&=\sum_{i=0}^N \left(a_i\cdot \frac{1-y^{kN-i}}{1-y}\right)
\end{align*}
for $k\geq 1$. Applying this to the above formula, 
\begin{align*}
\mathsf{Bl}_{N,N-1}
&=\frac{(-1)^{N+1}}{q}\sum_{i=0}^N a_i \sum_{k\geq 1}\frac{1-y^{kN-i}}{1-y}q^k\\
&=\frac{(-1)^{N+1}}{(1-y)q}\sum_{i=0}^Na_i\left(\frac{q}{1-q}-\frac{y^{N-i}q}{1-y^Nq}\right)\\
&=\frac{(-1)^{N+1}}{1-y}\Bigg(\frac{\sum_i a_i}{1-q}-\frac{y^{N}\cdot \sum_i a_iy^{-i}}{1-y^Nq}\Bigg)\\
&=\frac{1}{1-y}\cdot\frac{(1-y^N)-(1-y^{2N})q}{(1-q)(1-y^Nq)},
\end{align*}
where (\ref{D}) was used in the last line. This completes the proof. 

\subsection{Surfaces with $p_g>0$ and rationality of generating series}\label{srat} 
\noindent In this subsection, we prove Theorem \ref{trat} regarding the rationality of the generating series for surfaces with $p_g>0$. This follows from Theorems \ref{t0}, \ref{tell}, \ref{tg}, \ref{tbl} together with the understanding of the Seiberg-Witten basic classes. 

We first reduce the proof of Theorem \ref{trat} to the case of minimal surfaces with $p_g>0$. Let $X$ be a surface with $p_g>0$. Let $\pi:\widetilde{X}\rightarrow X$ be the blow up of a point with the exceptional divisor $E$. Note that any curve class of $\widetilde{X}$ is uniquely written as
\[\widetilde{\beta}+\ell E\]
for some curve class $\beta$ of $X$ and $\ell\in\Z$. By Remark \ref{rSW}, $\beta$ is of Seiberg-Witten length $N$ for arbitrary $N$. Theorem \ref{tbl} thus applies to this setting. We obtain
\[Z_{\widetilde{X},\,N,\,\widetilde{\beta}+\ell E}(q)
=\left(q^\ell\cdot \mathsf{Bl}_{N,\,\ell}(q)\right) Z_{X,\,N,\,\beta}(q),
\]
where $\mathsf{Bl}_{N,\,\ell}(q)$ is a rational function in the $q$ variable. Since every surface is a successive blow up of a certain minimal surface, it is enough to prove rationality of $Z_{X,N,\,\beta}(q)$ for minimal surfaces $X$ with $p_g>0$. 

By the Enriques–Kodaira classification, minimal surfaces with $p_g>0$ are of the form:
\begin{enumerate}
\item[(i)] \kod=0: $K3$ or abelian surfaces
\item[(ii)] \kod=1: minimal elliptic surfaces with $p_g>0$
\item[(iii)] \kod=2: minimal surfaces of general type with $p_g>0$.
\end{enumerate} 
In each case, the Seiberg-Witten basic classes are explicitly known by the proof of \cite[Proposition 4.20]{DKO}. For $K3$ or abelian surfaces, the only Seiberg-Witten basic class is $0=K_X$. For minimal elliptic surfaces with $p_g>0$, every Seiberg-Witten basic class is a rational multiple of the fiber. For minimal surfaces of general type with $p_g>0$, the only Seiberg-Witten basic classes are $0$ and $K_X$. In each case, rationality follows by Theorems \ref{t0}, \ref{tell}, \ref{tg}, respectively. This proves Theorem \ref{trat}.

\section{The reduced invariants of $K3$ surfaces}\label{sK3}

\noindent If X is a $K3$ surface, the only Seiberg-Witten basic class is $0=K_X$ by the proof of \cite[Proposition 4.20]{DKO}. Therefore the usual Quot scheme invariants with $\beta\neq 0$ will vanish because $\beta$ cannot be decomposed into Seiberg-Witten basic classes. Even when $\beta=0$, the Quot scheme invariants vanish, unless $n=0$. To obtain nontrivial invariants, we consider the reduced obstruction theory obtained by removing a trivial factor from the obstruction bundle. Thus, we study the reduced invariants of $K3$ surfaces when $N=1$.

Let $X$ be a $K3$ surface and $\beta$ be a curve class of genus $2g-2=\beta^2$ which is big and nef. Then the corresponding line bundle $\OO(D)$ has vanishing higher cohomology groups. By Lemma \ref{lstructure}, we identify
\[\Quot_X(\C^1\!,\beta,n)=X^{[m]}\times\PP^{g},
\]
where $m=n+(g-1)$ and $\PP^{g}=\Hilb_X^\beta=|\OO(D)|$. Note that $\Hilb_X^{\beta}$ has a trivial rank $1$ obstruction space
\[H^1(\OO_D(D))\cong H^2(\OO_X),
\]
where the isomorphism follows from the long exact sequence corresponding to 
\[0\rightarrow\OO_X\rightarrow\OO_X(D)\rightarrow\OO_D(D)\rightarrow0.\] 
Therefore, by formula (\ref{vtan}) the virtual tangent bundle of the Quot scheme is
\begin{align*}
T_\Quot ^\vir&=T_{X^{[m]}}+T^\vir_{\Hilb_X^\beta}-R\HHom_\pi(\OO_\ZZ,\OO(\DD))\\
&=T_{X^{[m]}}+T_{\PP^{g}}-H^2(\OO_X)-\big((K_X-D)^{[m]}\big)^\vee\otimes \LL,
\end{align*}
where $\LL=\OO_{\PP^g}(1)$ denotes the tautological line bundle over $\PP^{\, g}$. This implies that the obstruction bundle of $\Quot_X(\C^1\!,\beta,n)$ equals
\[ H^2(\OO_X)+\big((-D)^{[m]}\big)^\vee\otimes \LL
\]
By removing a trivial factor $H^2(\OO_X)$ from the obstruction theory, we get a reduced obstruction bundle 
\[\red.\Obs=\big((-D)^{[m]}\big)^\vee\otimes \LL.
\]
inducing the reduced obstruction theory. The reduced virtual $\chi_{-y}$-genus is expressed as an integral
\[\chi_{-y}^\red\big(\Quot_X(\C^1\!,\beta,n)\big)=
\int_{X^{[m]}\times {{\PP\phantom{\vert\!\!}}^g}}e\big(((-D)^{[m]})^\vee\otimes \mathcal{L}\big)\cdot\frac{\mathcal{X}_{-y}(T_{X^{[m]}})\mathcal{X}_{-y}(T_{{{\PP\phantom{\vert\!\!}}^g}})}
{\mathcal{X}_{-y}\big(((-D)^{[m]})^\vee\otimes \mathcal{L})\big)}.
\]
\subsection{$K3$ surfaces with primitive curve classes}\label{proof of K3}
\noindent Now we prove Theorem \ref{tK3}. We closely follow the proof of \cite[Theorem 21]{OP}. No changes are needed here other than exchanging the total Chern class $c(\cdot)$ to the class $\mathcal{X}_{-y}(\cdot)$.  Since $X^{[m]}$ is holomorphic symplectic, we can replace the tangent bundle with the isomorphic cotangent bundle in the above formula
\[\chi_{-y}^\red\big(\Quot_X(\C^1\!,\beta,n)\big)=
\int_{X^{[m]}\times {{\PP\phantom{\vert\!\!}}^g}}e\big(((-D)^{[m]})^\vee\otimes \mathcal{L}\big)\cdot\frac{\mathcal{X}_{-y}\big((T_{X^{[m]}})^\vee\big)\mathcal{X}_{-y}(T_{{\PP\phantom{\vert\!\!}}^g})}
{\mathcal{X}_{-y}\big(((-D)^{[m]})^\vee\otimes \mathcal{L})\big)}.
\]
We claim that 
\begin{multline*}
\int_{X^{[m]}\times {{\PP\phantom{\vert\!\!}}^g}}e\big(((-D)^{[m]})^\vee\otimes \mathcal{L}\big)\cdot\frac{\mathcal{X}_{-y}\big((T_{X^{[m]}})^\vee\big)\mathcal{X}_{-y}(T_{{\PP\phantom{\vert\!\!}}^g})}
{\mathcal{X}_{-y}\big(((-D)^{[m]})^\vee\otimes \mathcal{L})\big)}\\
=\int_{X^{[m]}\times {{\PP\phantom{\vert\!\!}}^g}}e(D^{[m]}\otimes \mathcal{L})\cdot\frac{\mathcal{X}_{-y}(T_{X^{[m]}})\mathcal{X}_{-y}(T_{{\PP\phantom{\vert\!\!}}^g})}
{\mathcal{X}_{-y}(D^{[m]}\otimes \mathcal{L})}.
\end{multline*}
After integrating out the hyperplane class $c_1(\mathcal{L})=h$ on $\PP^g$, this claim is equivalent to the statement 
\[\int_{X^{[m]}} \mathsf{P}\left(c_i\left(\left((-D)^{[m]}\right)^\vee\right), c_j\left(\left(T_{X^{[m]}}\right)^\vee\right)\right)
=
\int_{X^{[m]}} \mathsf{P}\left(c_i\left(D^{[m]}\right), c_j\left(T_{X^{[m]}}\right)\right),
\]
where $\mathsf{P}$ is a universal polynomial in the Chern classes of tautological bundles on $X^{[m]}$ with coefficients in $\Q(y)$. Since $X^{[m]}$ is even dimensional, we can remove the duals on the left hand side. Therefore we must show
\[\int_{X^{[m]}} \mathsf{P}\left(c_i\left((-D)^{[m]}\right), c_j\left(T_{X^{[m]}}\right)\right)
=
\int_{X^{[m]}} \mathsf{P}\left(c_i\left(D^{[m]}\right), c_j\left(T_{X^{[m]}}\right)\right).
\]
By using the argument in \cite{EGL}, the left hand side is the universal polynomial in the Chern numbers
\[(-D)^2, \ K_X^2, \ (-D).K_X, \ c_2(X)\]
with coefficients in $\Q(y)$. The right hand side is similar, with the relevant Chern numbers being
\[D^2, \ K_X^2, \ D.K_X, \ c_2(X).\]
Since $X$ is a $K3$ surface, $K_X$ is trivial hence all the Chern numbers match. This proves the claim. 

To complete the proof we need to show that 
\[\chi_{-y}(\mathcal{C}_g^{[m]})=
\int_{X^{[m]}\times {{\PP\phantom{\vert\!\!}}^g}}e(D^{[m]}\otimes \mathcal{L})\cdot\frac{\mathcal{X}_{-y}(T_{X^{[m]}})\mathcal{X}_{-y}(T_{{\PP\phantom{\vert\!\!}}^g})}
{\mathcal{X}_{-y}(D^{[m]}\otimes \mathcal{L})}.
\]
It was noted in \cite[Section 4]{KST} that the tautological section of $D^{[m]}\otimes \mathcal{L}$ on $X^{[m]}\times \PP^g$ has a scheme theoretic zero locus 
\[\mathcal{C}_g^{[m]} \subset X^{[m]}\times \PP^g.\]
On the other hand, $\mathcal{C}_g^{[m]}$ is smooth of expected dimension when $\beta$ is an irreducible class by \cite{KY}.  
In general, smooth projective zero locus $Z\subset M$ cut out by a regular section of a vector bundle $E\to M$ satisfies
\[\chi_{-y}(Z)= \int_{M} e(E)\cdot\frac{\mathcal X_{-y}(T_M)}{\mathcal{X}_{-y}(E)}.\] 
Applying this to our setting, we obtain the formula claimed above.

\begin{remark}\normalfont
One can modify the proof of Theorem \ref{tK3}, paying attention to the Picard group contribution, to obtain a similar statement about abelian surfaces: if $X$ is an abelian surface and $\beta$ is an irreducible curve class of genus $2g-2=\beta^2$ which is big and nef, then 
\[\chi_{-y}^\red\big(\Quot_X(\C^1\!,\beta,n)\big)=\chi_{-y}(\mathcal{C}_g^{[m]}),
\] 
where $\mathcal{C}_g$ is the universal curve over $\Hilb_X^\beta$. 
Unfortunately, the right hand side is zero because the abelian surface $X$ itself acts on $\mathcal{C}_g^{[m]}$ by translation without fixed points. To obtain nontrivial invariants, one may restrict to curves in a fixed linear system. It is then natural to ask whether one can define a perfect obstruction theory on the Quot scheme with fixed determinant whose reduced virtual invariants coincide with stable pair invariants. We wish to come back to this problem in future work. 
\end{remark}
Now we prove Corollary \ref{cor} by combining Theorem \ref{tK3} with the Kawai-Yoshioka formula of \cite{KY}. Let $X$ be a $K3$ surface and $\beta$ be a primitive curve class which is big and nef of arithmetic genus $g$. By deformation invariance of reduced invariants, we may assume that $\beta$ is irreducible. By Theorem \ref{tK3}, the generating series of the shifted reduced invariants is
\begin{align*}
\overline{Z}_{X,1,\,\beta}^\red(t)&:=\sum_{n\in \Z} \overline{\chi}_{-y}^\red\left(\Quot_X(\C^1\!,\beta,n)\right) t^n=\sum_{m\geq 0} \overline{\chi}_{-y}(\mathcal{C}_g^{[m]}) t^{m+1-g}
\end{align*}
since 
\[\red.\vd\big(\Quot_X(\C^1\!,\beta,n)\big)=m+g=\dim(\mathcal{C}_g^{[m]}).
\]
Recall the specialization of the Kawai-Yoshioka formula to the shifted ${\chi}_{-y}$-genus
\begin{equation}\label{KY}
\sum_{g\geq 0}\sum_{m\geq 0} \overline{\chi}_{-y}(\mathcal{C}_g^{[m]})t^{m+1-g}q^{g-1}
=\frac{y^{-1/2}-y^{1/2}}{\Delta(q)}\frac{\theta'(1)^3}{\theta(y^{1/2}/t)\theta(ty^{1/2})\theta(y)}. 
\end{equation}
Here, the modular discriminant $\Delta(q)$, the theta function $\theta(y,q)=\theta(y)$ and the derivative $\theta'(1)$ are given by
\begin{align*}
\Delta(q)&:=q\prod_{n>0} (1-q^n)^{24},\\
\theta(y)&:=q^{1/8}(y^{1/2}-y^{-1/2})\prod_{n>0}(1-q^n)(1-q^ny)(1-q^n/y),\\
\theta(1)'\!&:=\left(y\frac{\partial}{\partial y}\,\theta(y)\right)\bigg\vert_{y=1}=q^{1/8}\prod_{n>0} (1-q^n)^3.
\end{align*}
Substituting these into (\ref{KY}), we obtain
{\small
\begin{multline*}
\frac{1}{(t+\frac{1}{t}-\sqrt{y}-\frac{1}{\sqrt{y}})\cdot q\ }\ \times\\
\prod\limits_{n>0} \frac{1}{(1-q^n)^{18}(1-q^ny)(1-q^n/y)(1-q^n\sqrt{y}/t)(1-q^nt/\sqrt{y})(1-q^nt\sqrt{y})(1-q^n/t\sqrt{y})
}.
\end{multline*}}
Therefore the corollary follows from 
\[\overline{Z}_{X,1,\,\beta}^\red(t)=[q^{g}]\left(q\cdot \sum_{g\geq 0}\sum_{m\geq 0} \overline{\chi}_{-y}(\mathcal{C}_g^{[m]})t^{m+1-g}q^{g-1}\right).
\]
\subsection{$K3$ surfaces with the zero curve class}\label{sred}
\noindent In this subsection, we prove Theorem \ref{tred}. Let $X$ be a $K3$ surface. The obstruction bundle $(\mathcal{O}^{[n]})^\vee$ of the Hilbert scheme of points $X^{[n]}$ has a trivial factor for $n\geq 1$. To obtain non trivial invariants, we use the reduced obstruction theory with the reduced obstruction bundle
\[ \red.\Obs=\big(\mathcal{O}^{[n]}-\mathcal{O}\big)^\vee.\]

Theorem \ref{tred} does not involve any curve class $\beta$, but the proof will make use of Theorem \ref{tK3}. By the deformation invariance of the reduced $\chi_{-y}$-genus, we may assume that $X$ is an elliptic $K3$ surface with the fiber class $f$. The idea of using elliptic $K3$ surface is due to \cite{OP} which we closely follow below. Note that the curve class $f$ is of genus 1 and the associated line bundle has no higher cohomology groups. Therefore the Quot scheme  $\Quot_X(\C^1\!,f, n)$ is identified with
\[X^{[n]}\times \PP^1.\]
We claim that 
\begin{equation}\label{E}
\chi_{-y}^\red (X^{[n]})=\chi_{-y}^\red\big(\Quot_X(\C^1\!,f, n)\big)\quad  \text{for} \ \ n\geq 1.
\end{equation}
First, we show that this claim implies Theorem \ref{tred}. Note that the fiber class $f$ is not big, but the same proof of Theorem \ref{tK3} will work; bigness was used only to conclude that $\beta$ has no higher cohomology groups, which is the case for us. 
Since the reduced virtual dimension of $\Quot_X(\C^1\!,f, n)$ is $n+1$, 
\begin{align*}
\sum_{n=1}^\infty t^n \chi_{-y}^\red (X^{[n]})
&=\sum_{n=1}^\infty t^ny^{\frac{n+1}{2}} \overline{\chi}_{-y}^\red\big(\Quot_X(\C^1\!,f, n)\big)\\
&=\sqrt{y} \sum_{n=1}^\infty (t\sqrt{y})^n\, \overline{\chi}_{-y}^\red\big(\Quot_X(\C^1\!,f, n)\big).
\end{align*}
By Theorem \ref{tK3}, this is the coefficient of $q^1$ of the expression
\begin{multline*}
\frac{1}{(t+\frac{1}{t}-\sqrt{y}-\frac{1}{\sqrt{y}})\ }\ \times\\
\prod\limits_{n>0} \frac{1}{(1-q^n)^{18}(1-q^ny)(1-q^n/y)(1-q^n\sqrt{y}/t)(1-q^nt/\sqrt{y})(1-q^n\sqrt{y}t)(1-q^n/\sqrt{y}t)}
\end{multline*}up to minor modifications: subtract the coefficient of $t^0$, change the variable $t$ by $t\sqrt{y}$, and multiply by $\sqrt{y}$. One can check by computation that this gives Theorem \ref{tred}. 

Now we prove (\ref{E}) by analyzing the integral giving the reduced $\chi_{-y}$-genus in detail. The reduced $\chi_{-y}$-genus of $X^{[n]}$ can be written as
\[\chi_{-y}^\red(X^{[n]})
=\int_{X^{[n]}} e\left(\big(\mathcal{O}^{[n]}-\mathcal{O}\big)^\vee\right)\cdot 
\frac{\mathcal{X}_{-y}(T_{X^{[n]}})}{\mathcal{X}_{-y}\left(\big(\mathcal{O}^{[n]}-\mathcal{O}\big)^\vee\right)}.
\]
Denote the Chern roots of the tautological bundle $\mathcal{O}^{[n]}$ by $\alpha_1, \cdots, \alpha_n$ with $\alpha_1=0$. Then the above integral becomes
\begin{equation}\label{F}
\int_{X^{[n]}} \prod_{i=2}^n \frac{1-e^{\alpha_i}}{1-ye^{\alpha_i}} \cdot 
\mathcal{X}_{-y}(T_{X^{[n]}}).
\end{equation}
By the previous subsection, the reduced obstruction bundle of the Quot scheme $\Quot_X(\C^1\!,f, n)=X^{[n]}\times \PP^1$ is
\[\Obs=\left(\mathcal{O}(-f)^{[n]}\right)^\vee\otimes \mathcal{L},
\]
where $\mathcal{L}$ is the tautological line bundle of $\PP^1$. Therefore the reduced $\chi_{-y}$-genus of $\Quot_X(\C^1\!,f, n)$ can be written as
\[\chi_{-y}^\red \big(\Quot_X(\C^1\!,f, n)\big)
=\int_{X^{[n]}\times \PP^1}e\left(\left(\mathcal{O}(-f)^{[n]}\right)^\vee\otimes \mathcal{L}
\right)\cdot 
\frac{\mathcal{X}_{-y}(T_{X^{[n]}})\cdot \mathcal{X}_{-y}(T_{\PP^1})}{\mathcal{X}_{-y}\left(\left(\mathcal{O}(-f)^{[n]}\right)^\vee\otimes \mathcal{L}\right)}.
\]
Denote the Chern roots of the tautological bundle $\mathcal{O}(-f)^{[n]}$ by $\mu_1, \cdots, \mu_n$. Denote the hyperplane class of $\PP^1$ by $h=c_1(\mathcal{L})$. The Euler sequence of the tangent bundle of $\PP^1$ implies
\[\mathcal{X}_{-y}(T_{\PP^1})=\frac{\mathcal{X}_{-y}(\mathcal{O}(1))^2}{1-y}=(1-y)+(1+y)h.
\]
Substituting these, the above integral becomes 
\[\int_{X^{[n]}\times \PP^1}
\prod_{i=1}^n \frac{1-e^{-h+\mu_i}}{1-ye^{-h+\mu_i}} \cdot
\big((1-y)+(1+y)h\big)\cdot \mathcal{X}_{-y}(T_{X^{[n]}}).
\]

By integrating out the hyperplane class over $\PP^1$, the integral  becomes
\[\int_{X^{[n]}} 
\left((1+y)[h^0]\prod_{i=1}^n \frac{1-e^{-h+\mu_i}}{1-ye^{-h+\mu_i}}
+(1-y)[h^1]\prod_{i=1}^n \frac{1-e^{-h+\mu_i}}{1-ye^{-h+\mu_i}}\right)\mathcal{X}_{-y}(T_{X^{[n]}})\]
which simplifies to 
\[\int_{X^{[n]}}\left( (1+y)\prod_{i=1}^n \frac{1-e^{\mu_i}}{1-ye^{\mu_i}}
+(1-y)^2\sum_{i=1}^n \frac{e^{\mu_i}}{(1-ye^{\mu_i})^2}\cdot \prod_{j\neq i} \frac{1-e^{\mu_i}}{1-ye^{\mu_i}}
\right)\mathcal{X}_{-y}(T_{X^{[n]}}).
\]
This is a tautological integral over $X^{[n]}$ of the Chern classes of $\mathcal{O}(-f)^{[n]}$ and $T_{X^{[n]}}$. Therefore, as in \cite{EGL}, the resulting integral is an universal polynomial in the Chern numbers of the involved data
\[M^2, \ K_X^2, \ M.K_X, \ c_2(X) ,\]
where $M=\mathcal{O}(-f)$. These numbers are unchanged when we replace $M$ by $\mathcal{O}$ because $f^2=0$ and $K_X=0$. Hence the integral is unchanged if we replace $\mu_i$ by $\alpha_i$. Since $\alpha_1=0$, the integral becomes
\[\int_{X^{[n]}}\left((1+y)\cdot 0+(1-y)^2 \frac{1}{(1-y)^2} \cdot \prod_{j=2}^n \frac{1-e^{\alpha_j}}{1-ye^{\alpha_j}}\right)\mathcal{X}_{-y}(T_{X^{[n]}}).
\]
because 
\[\frac{1-e^{\alpha_1}}{1-ye^{\alpha_1}}=\frac{\alpha_1}{\XX_{-y}(\alpha_1)}=0.
\]
The last expression equals (\ref{F}), thus establishing claim (\ref{E}).

\end{document}